\def\llex{<_\mathrm{lex}}
\def\lelex{\le_\mathrm{lex}}
\def\A{{\mathcal A}}
\def\N{{\mathbb N}}
\def\SS{{\mathfrak S}}
\newcommand{\Sym}{\Sigma}
\newcommand{\Magma}{\textsc{Magma}}
\newcommand{\claim}[1]{Claim~(\emph{#1}\hspace*{1pt})}
\newcommand{\claims}[1]{Claims~(\emph{#1}\hspace*{1pt})}
\newcommand{\noclaim}[1]{(\emph{#1}\hspace*{1pt})}
\newtheorem{theorem}{Theorem}[section]
\newtheorem{lemma}[theorem]{Lemma}
\newtheorem{proposition}[theorem]{Proposition}
\newtheorem{corollary}[theorem]{Corollary}
\newtheorem{remark}[theorem]{Remark}
\newtheorem{definition}[theorem]{Definition}
\newtheorem{example}[theorem]{Example}
\title{Generating random braids}
\author{Volker Gebhardt\footnote{Both authors acknowledge support under Australian Research Council's Discovery Projects funding scheme (project number DP1094072), and the Spanish Project MTM2010-19355.}, Juan Gonz\'alez-Meneses\footnote{Partially supported by Project P09-FQM-5112 and FEDER.}}
\date{June 30, 2012}
\begin{document}

\maketitle


\begin{abstract}
We present an algorithm to generate positive braids of a given length as words in Artin generators with a uniform probability. The complexity of this algorithm is polynomial in the number of strands and in the length of the generated braids.

As a byproduct, we describe a finite state automaton accepting the language of lexi\-cographically minimal representatives of positive braids that has the minimal possible number of states,
and we prove that its number of states is exponential in the number of strands.
\end{abstract}

\section{Introduction}\label{S:introduction}

Recently, many research papers have appeared that describe effective computations with braids on $n$ strands.
In many cases, the results of performing a certain computation for a set of `random' or `pseudo-random' braids of a given length are reported.
Usually, the authors generate so-called {\it positive braids}, that is, elements of the submonoid $B_n^+$ of the braid group $B_n$ generated (as a monoid) by the standard Artin generators $\{\sigma_1,\ldots,\sigma_{n-1}\}$.
Methods commonly used to generate such positive braids are the following:

\begin{enumerate}\vspace{-1ex}\addtolength{\itemsep}{-1ex}
\item
In order to generate a positive braid of length $k$, choose $k$ times an element of $\{\sigma_1,\ldots,\sigma_{n-1}\}$ with uniform probability, and form their product.
\item
In order to generate a positive braid of canonical length $k$, choose $k$ times a simple braid with uniform probability on the (finite) set of simple braids and compute their product; if the obtained braid has canonical length smaller than $k$, discard it and try again.
\end{enumerate}\vspace{-1ex}
However, none of these procedures generates braids with a uniform distribution; some braids are far more likely to appear than others.
For instance, if one uses the first procedure to produce positive braids of length 6 in $B_4$, the probability of obtaining the braid $(\sigma_1)^6$ is $3^{-6}$, as there is only one way to write this braid as a positive word in Artin generators.  On the other hand, the probability of obtaining the braid $\Delta=\sigma_1\sigma_2\sigma_1\sigma_3\sigma_2\sigma_1$ is $16\cdot3^{-6}$, as there are 16 distinct ways to write $\Delta\in B_4$ as a positive word in Artin generators~\cite{Stanley}.
That is, the above procedure is 16 times more likely to generate $\Delta$ than to generate $(\sigma_1)^6$. This bias becomes more dramatic as $n$ and the length of the braids involved increase.

In this paper we shall give a procedure to generate random positive braids of given length with a uniform probability.
That is, given $n$ and $k$ as input, the algorithm generates a positive braid in $B_n$ of length $k$, in such a way that the probability of obtaining any given braid is $1/x_{n,k}$, where $x_{n,k}$ is the number of positive braids of length $k$ in $B_n$.

The structure of the paper is as follows.
Section~\ref{S:overview_algorithm} describes the basic idea of generating uniformly random positive braids via lexicographically minimal representative words (lex-representatives, for short).
In Section~\ref{S:counting_braids}, we count the braids in $B_n^+$ of a given length~$k$ using a result by Bronfman.
In Section~\ref{S:Forbidden_prefixes}, we develop a description of lex-representatives, which will be used in Section~\ref{S:counting_braids_with_prefixes} to count the lex-representatives that start with a given prefix, completing the description of our algorithm.

It is known that the set of lex-representatives is a regular language.
In Section~\ref{S:automaton}, we show that our description of lex-representatives yields an acceptor for this regular language that has the minimal number of states, and that the number of states is exponential in~$n$.  This shows, in particular, that using standard language theoretical techniques to generate uniformly random positive braids is not efficient.
Finally, in Section~\ref{S:Complexity}, we analyse the complexity of our algorithm and give timing results.

{\bf Acknowledgements:} We thank Pascal Weil, Fr\'ed\'erique Bassino and Cyril Nicaud, for suggesting to us the use of automatic structures to generate random elements, which led us to realise that our sets of {\it minimal forbidden prefixes} provide a minimal finite state automaton.

\section{Structure of the algorithm}\label{S:overview_algorithm}

The basic idea of our algorithm to produce random positive braids is the following. Let $\A_n=\{\sigma_1,\ldots,\sigma_{n-1}\}$ be the set of standard generators (or {\it atoms}) in $B_n^+$. Let $\A_n^*$ be the free monoid generated by $\A_n$. We know there is a morphism of monoids $b:\: \A_n^* \rightarrow B_n^+$ which sends each element in $\A_n^*$ (a word in $\sigma_1,\ldots,\sigma_{n-1}$) to the positive braid it represents. As we saw above, the map $b$ is not injective, so we are going to define a
section of $b$. For that purpose, we will order the elements in $\A_n^*$ having the same length using $\llex$, which is the lexicographical order in which $\sigma_1<\sigma_2<\cdots<\sigma_{n-1}$.

\begin{definition}
Given $\beta\in B_n^+$, we define the {\bf lex-representative} of $\beta$ to be
$$
  \omega(\beta)=\min_{\llex}\{b^{-1}(\beta)\} \in {\A_n^*}.
$$
\end{definition}

In other words, $\omega(\beta)$ is the smallest positive word, with respect to $\llex$, that represents $\beta$. Notice that this is well defined as $B_n^+$ is a homogeneous monoid (all words representing a given element have the same length). For instance $\omega(\sigma_3\sigma_1)= \sigma_1\sigma_3$, $\omega(\sigma_2\sigma_1\sigma_2)=\sigma_1\sigma_2\sigma_1$ and for $\Delta\in B_4$, $\omega(\Delta)=\sigma_1\sigma_2\sigma_1\sigma_3\sigma_2\sigma_1$. Here the arguments of $\omega$ are braids in $B_n^+$, whereas the images are words in $\A_n^*$.

It is clear that $\omega:\: B_n^+\rightarrow \A_n^*$ is a
section of $b$, so it is injective. Moreover, as $B_n^+$ is homogeneous, we can define $(B_n^+)_k$ to be the set of positive braids of length $k$, and $L_{n,k}$ to be the set of lex-representatives of length $k$ ($L$ stands for language). Then $\omega: \: (B_n^+)_k \rightarrow L_{n,k}$ is a bijection, whose inverse is $b_{|_{L_{n,k}}}$.

Our algorithm will produce a random element of $L_{n,k}$ with uniform probability.  By the above arguments, this is equivalent to producing a random positive braid of length $k$ with uniform probability on $(B_n^+)_k$. There are three main steps:
\begin{enumerate}\vspace{-1ex}\addtolength{\itemsep}{-1ex}
\item Determine the size of $L_{n,k}$, that is, compute $x_{n,k}=\big|(B_n^+)_k\big|$.
\item Choose a random integer $r$ between 1 and $x_{n,k}$.
\item Find the $r$-th word in $L_{n,k}$, where $L_{n,k}$ is ordered by $\llex$.
\end{enumerate}\vspace{-1ex}

The second step poses no difficulty (assuming we know how to generate random integers), so our task is to be able to perform steps 1 and 3 in polynomial time.

It is useful to identify the lex-representatives of braids in $B_n^+$ as the vertices of a rooted tree, with the elements of $L_{n,k}$ being the vertices at depth $k$ and edges given by the prefix partial order in the monoid $\A_n^*$.  (That is, the root corresponds to the trivial word $\epsilon$, and given a word $w\in L_{n,s}$ and $\sigma_i\in\A_n$ such that $w\sigma_i \in L_{n,s+1}$, there is an edge labelled $\sigma_i$ joining $w$ to $w\sigma_i$.)

Figure~\ref{F:rooted_tree} shows the tree of lex-representatives in $B_4^+$ truncated at depth $3$; the elements of $L_{4,3}$ correspond to the leaves of the truncated tree.
We have $|L_{4,0}|=1$, $|L_{4,1}|=3$, $|L_{4,2}|=8$ and $|L_{4,3}|=19$. Notice that there are no vertices corresponding to the words $\sigma_3\sigma_1$ and $\sigma_2\sigma_1\sigma_2$, as these words are not lex-representatives.
The fact that there are 19 leaves means that there are 19 positive braids in $B_4^+$ of length~$3$.
Hence, in this example, our algorithm will choose a random number $r$ between 1 and 19, and it will look for the $r$\nobreakdash-th leaf of the rooted tree, counting from left to right, as leaves are ordered lexicographically by construction.

\begin{figure}[ht]
\[\xygraph{
!{<0cm,0cm>;<0.427cm,0cm>:<0cm,1cm>::}
!{(16,3) }*+{\bullet}="1"
!{(5,2) }*+{\bullet}="a"
!{(16,2) }*+{\bullet}="b"
!{(24,2) }*+{\bullet}="c"
!{(1,1) }*+{\bullet}="aa"
!{(5,1) }*+{\bullet}="ab"
!{(8,1) }*+{\bullet}="ac"
!{(12.5,1) }*+{\bullet}="ba"
!{(16,1) }*+{\bullet}="bb"
!{(19,1) }*+{\bullet}="bc"
!{(24,1) }*+{\bullet}="cb"
!{(26,1) }*+{\bullet}="cc"
!{(0,-1.5) }*+{\begin{sideways}$^{\scriptstyle\sigma_1\sigma_1\sigma_1}\;\bullet$\end{sideways}}="aaa"
!{(1,-1.5) }*+{\begin{sideways}$^{\scriptstyle\sigma_1\sigma_1\sigma_2}\;\bullet$\end{sideways}}="aab"
!{(2,-1.5) }*+{\begin{sideways}${\scriptstyle\sigma_1\sigma_1\sigma_3}\;\bullet$\end{sideways}}="aac"
!{(4,-1.5) }*+{\begin{sideways}$^{\scriptstyle\sigma_1\sigma_2\sigma_1}\;\bullet$\end{sideways}}="aba"
!{(5,-1.5) }*+{\begin{sideways}$^{\scriptstyle\sigma_1\sigma_2\sigma_2}\;\bullet$\end{sideways}}="abb"
!{(6,-1.5) }*+{\begin{sideways}${\scriptstyle\sigma_1\sigma_2\sigma_3}\;\bullet$\end{sideways}}="abc"
!{(8,-1.5) }*+{\begin{sideways}$^{\scriptstyle\sigma_1\sigma_3\sigma_2}\;\bullet$\end{sideways}}="acb"
!{(9,-1.5) }*+{\begin{sideways}${\scriptstyle\sigma_1\sigma_3\sigma_3}\;\bullet$\end{sideways}}="acc"
!{(12,-1.5) }*+{\begin{sideways}$^{\scriptstyle\sigma_2\sigma_1\sigma_1}\;\bullet$\end{sideways}}="baa"
!{(13,-1.5) }*+{\begin{sideways}${\scriptstyle\sigma_2\sigma_1\sigma_3}\;\bullet$\end{sideways}}="bac"
!{(15,-1.5) }*+{\begin{sideways}$^{\scriptstyle\sigma_2\sigma_2\sigma_1}\;\bullet$\end{sideways}}="bba"
!{(16,-1.5) }*+{\begin{sideways}$^{\scriptstyle\sigma_2\sigma_2\sigma_2}\;\bullet$\end{sideways}}="bbb"
!{(17,-1.5) }*+{\begin{sideways}${\scriptstyle\sigma_2\sigma_2\sigma_3}\;\bullet$\end{sideways}}="bbc"
!{(19,-1.5) }*+{\begin{sideways}$^{\scriptstyle\sigma_2\sigma_3\sigma_2}\;\bullet$\end{sideways}}="bcb"
!{(20,-1.5) }*+{\begin{sideways}${\scriptstyle\sigma_2\sigma_3\sigma_3}\;\bullet$\end{sideways}}="bcc"
!{(23,-1.5) }*+{\begin{sideways}$^{\scriptstyle\sigma_3\sigma_2\sigma_1}\;\bullet$\end{sideways}}="cba"
!{(24,-1.5) }*+{\begin{sideways}${\scriptstyle\sigma_3\sigma_2\sigma_2}\;\bullet$\end{sideways}}="cbb"
!{(26,-1.5) }*+{\begin{sideways}$^{\scriptstyle\sigma_3\sigma_3\sigma_2}\;\bullet$\end{sideways}}="ccb"
!{(27,-1.5) }*+{\begin{sideways}${\scriptstyle\sigma_3\sigma_3\sigma_3}\;\bullet$\end{sideways}}="ccc"
"1"-"a"_{\sigma_1}
"1"-"b"_{\sigma_2}
"1"-"c"^{\sigma_3}
"a"-"aa"_{\sigma_1}
"a"-"ab"_{\sigma_2}
"a"-"ac"^{\sigma_3}
"b"-"ba"_{\sigma_1}
"b"-"bb"_{\sigma_2}
"b"-"bc"^{\sigma_3}
"c"-"cb"_{\sigma_2}
"c"-"cc"^{\sigma_3}
"aa"-"aaa"_{\sigma_1}
"aa"-"aab"^(0.7){\!\!\sigma_2}
"aa"-"aac"^{\sigma_3}
"ab"-"aba"_{\sigma_1}
"ab"-"abb"^(0.7){\!\!\sigma_2}
"ab"-"abc"^{\sigma_3}
"ac"-"acb"_{\sigma_2}
"ac"-"acc"^{\sigma_3}
"ba"-"baa"_{\sigma_1}
"ba"-"bac"^{\sigma_3}
"bb"-"bba"_{\sigma_1}
"bb"-"bbb"^(0.7){\!\!\sigma_2}
"bb"-"bbc"^{\sigma_3}
"bc"-"bcb"_{\sigma_2}
"bc"-"bcc"^{\sigma_3}
"cb"-"cba"_{\sigma_1}
"cb"-"cbb"^{\sigma_2}
"cc"-"ccb"_{\sigma_2}
"cc"-"ccc"^{\sigma_3}
}\]\vskip-3ex
\caption{The rooted tree corresponding to $L_{4,3}$}
\label{F:rooted_tree}
\end{figure}

Of course, it is not efficient at all to try to compute the whole set $L_{n,k}$, as it has exponential size with respect to $k$. We shall overcome this problem by counting suitable subsets of leaves of the truncated rooted tree, as follows.

\begin{definition}
\label{D:x_n_k}
Given a word $w\in \A_n^*$ and an integer $m\in \{0,\ldots,n-1\}$, we define $x_{n,k}(w,m)$ to be the number of words in $L_{n,k}$ of the form $w w'$, where $w'\in \A_n^*$ does not start with $\sigma_1,\ldots,\sigma_m$.
\end{definition}

Obviously, $x_{n,k}(w,m) = 0$ if $w$ is not a lex-representative, or if $|w|>k$.
Moreover, if $w$ is a lex-representative, then $x_{n,k}(w,n-1) = 1$ if $k=|w|$ and $x_{n,k}(w,n-1) = 0$ otherwise.
We will show in Section~\ref{S:counting_braids} how to compute $x_{n,k}(\epsilon,0)=x_{n,k}$, and in Section~\ref{S:counting_braids_with_prefixes} how to compute $x_{n,k}(\epsilon,m)$ for $m\ge1$, and also $x_{n,k}(w,m)$ where $w=w'\sigma_j$ and $m\geq j-1$, in polynomial time and space with respect to $n$ and $k$. If we assume this to be known, we can explain our main algorithm with some more detail:

\begin{theorem}\label{T:Algorithm}
Suppose the time and space required to compute $x_{n,k}(w,m)$, where $m\ge0$ if $w=\epsilon$ and $m\ge j-1$ if $w=w'\sigma_j$, is polynomial in $n$ and~$k$.
Then there is an algorithm generating a random element of $\big(B_n^+\big)_k$, with uniform probability, whose time and space complexity is polynomial in $n$ and $k$.
\end{theorem}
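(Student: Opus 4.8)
The plan is to realise the three-step scheme (compute $x_{n,k}$, pick $r$, find the $r$-th word) as a walk down the rooted tree of lex-representatives, using the counting function $x_{n,k}(w,m)$ to decide, at each vertex, which subtree to descend into. Concretely, I would build the lex-representative $w=\sigma_{i_1}\sigma_{i_2}\cdots\sigma_{i_k}$ one letter at a time, maintaining after $t$ steps a prefix $w_t$ of length $t$ that is a lex-representative, together with an index $r_t\in\{1,\ldots,x_{n,k}(w_t,0)\}$ recording that we are looking for the $r_t$-th leaf in the subtree rooted at $w_t$ (with respect to $\lelex$). Initially $w_0=\epsilon$, $x_{n,k}(\epsilon,0)=x_{n,k}$, and $r_0=r$ is chosen uniformly in $\{1,\ldots,x_{n,k}\}$ as in step~2.

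The key observation driving the descent is a telescoping identity for the counting function. For a lex-representative $w$ and $m\ge 0$ (with $m\ge j-1$ if $w=w'\sigma_j$), the words counted by $x_{n,k}(w,m)$ are partitioned according to their first letter after $w$: either the word equals $w$ exactly (possible only if $|w|=k$, contributing $1$), or it continues with some $\sigma_i$ for $i>m$. Hence
\[
  x_{n,k}(w,m) \;=\; x_{n,k}(w,m+1) \;+\; x_{n,k}(w\sigma_{m+1},m) ,
\]
valid for $m<n-1$, and this lets us compute $x_{n,k}(w\sigma_i,i-1)$ for each admissible continuation $\sigma_i$ by a bounded number of evaluations of $x_{n,k}(\cdot,\cdot)$: namely $x_{n,k}(w\sigma_i,i-1)=x_{n,k}(w,i-1)-x_{n,k}(w,i)$. (Note that $x_{n,k}(w\sigma_i,i-1)>0$ exactly when $w\sigma_i$ is itself a lex-representative, so this simultaneously tells us which continuations are legal — there is no need for a separate lex-representative test.) At the vertex $w_t$ with target index $r_t$, I would then scan $i=1,2,\ldots,n-1$, accumulating the counts $c_i:=x_{n,k}(w_t\sigma_i,i-1)$; there is a unique $i^*$ with $c_1+\cdots+c_{i^*-1}<r_t\le c_1+\cdots+c_{i^*}$, and I set $w_{t+1}=w_t\sigma_{i^*}$ and $r_{t+1}=r_t-(c_1+\cdots+c_{i^*-1})$. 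After $k$ steps $r_k=1$ and $w_k$ is the desired word; its braid is $b(w_k)$.

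Correctness follows by induction on $t$: the invariant is that $w_t$ is a lex-representative of length $t$, that $1\le r_t\le x_{n,k}(w_t,0)$, and that the leaf ultimately produced is the $r$-th leaf of the whole tree if and only if $w_{t+1},r_{t+1}$ are chosen as above — this is exactly the statement that the leaves below $w_t$ are, in lexicographic order, the concatenation (over increasing $i$) of the leaves below $w_t\sigma_i$, which is immediate from the definition of $\lelex$ and Definition~\ref{D:x_n_k}. Since $r_0=r$ is uniform on $\{1,\ldots,x_{n,k}\}$ and $w\mapsto(\text{rank of the leaf }w)$ is a bijection $L_{n,k}\to\{1,\ldots,x_{n,k}\}$, the output $b(w_k)$ is uniform on $(B_n^+)_k$, using the bijection $\omega:(B_n^+)_k\to L_{n,k}$ from the excerpt. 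For the complexity claim: the algorithm performs $k$ descent steps, each consisting of at most $n-1$ evaluations of $x_{n,k}(w_t\sigma_i,i-1)$ (every one of the form permitted by the hypothesis, since the second argument is $i-1$ and the prefix ends in $\sigma_i$) together with $O(n)$ arithmetic on integers bounded by $x_{n,k}$, whose bit-length is polynomial in $n$ and $k$; step~2 is one random draw of such an integer. Thus the total time and space are polynomial in $n$ and $k$, as claimed.

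The main obstacle is not any single estimate but making sure the bookkeeping matches the somewhat restrictive interface of the hypothesis: $x_{n,k}(w,m)$ is only assumed computable when $m\ge j-1$ for $w=w'\sigma_j$ (and $m\ge 0$ for $w=\epsilon$). One must check that every call made during the descent respects this — the calls $x_{n,k}(w_t,i-1)$ and $x_{n,k}(w_t,i)$ needed to form $c_i=x_{n,k}(w_t\sigma_i,i-1)$ have $w_t$ ending in some $\sigma_j$ with, crucially, $i\ge j$ whenever $w_t\sigma_i$ can be a lex-representative (a lex-representative never has a descent of the form $\cdots\sigma_j\sigma_i$ with $i<j-1$, and the borderline $i=j-1$ is covered since we only need $m\ge i-1\ge j-2$, which is implied once $m\ge j-1$… here one should simply restrict the scan to $i\ge j-1$, legitimate because $w_t\sigma_i$ is not a lex-representative for $i<j-1$). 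Handling this edge case cleanly — equivalently, proving that the scan can start at $i=j-1$ rather than $i=1$ without missing any leaf — is the one place that requires a small lemma about the shape of lex-representatives; everything else is routine induction and arithmetic.
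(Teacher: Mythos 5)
Your overall strategy is the same as the paper's: compute $x_{n,k}$, draw $r$ uniformly, and then unrank the $r$-th leaf letter by letter, using values of $x_{n,k}(w,m)$ to decide which child subtree contains the target. (The paper counts leaves to the \emph{right} of the target and finds the next letter as $\min\{m\ge j-1 \mid x_{n,k}(w,m)\le\nu\}$ by binary search; you count from the left with a linear scan. Both give polynomial complexity.) However, there is a concretely false step in your write-up: the telescoping identity $x_{n,k}(w,m)=x_{n,k}(w,m+1)+x_{n,k}(w\sigma_{m+1},m)$ does not hold. By Definition~\ref{D:x_n_k}, the difference $x_{n,k}(w,m)-x_{n,k}(w,m+1)$ counts \emph{all} words of $L_{n,k}$ extending $w\sigma_{m+1}$, i.e.\ it equals $x_{n,k}(w\sigma_{m+1},0)$, not $x_{n,k}(w\sigma_{m+1},m)$, which excludes those continuations whose next letter is $\sigma_m$ or smaller. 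Counterexample: $n=3$, $k=2$, $w=\epsilon$, $m=1$. Here $L_{3,2}=\{\sigma_1\sigma_1,\sigma_1\sigma_2,\sigma_2\sigma_1,\sigma_2\sigma_2\}$, so $x_{3,2}(\epsilon,1)=2$ and $x_{3,2}(\epsilon,2)=0$, while $x_{3,2}(\sigma_2,1)=|\{\sigma_2\sigma_2\}|=1\ne 2$. Your algorithm survives this because it only ever uses the difference $x_{n,k}(w,i-1)-x_{n,k}(w,i)$, which \emph{is} the number of leaves below $w\sigma_i$ and hence the correct $c_i$ for the descent; but the lemma as stated is wrong and the identification $c_i=x_{n,k}(w\sigma_i,i-1)$ should be deleted, with $c_i$ instead defined as the difference and justified directly from Definition~\ref{D:x_n_k}.

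The second problem is the interface issue you flag but do not actually resolve. Restricting the scan to $i\ge j-1$ (where $\sigma_j$ is the last letter of $w$) is correct, but computing $c_{j-1}$ still requires $x_{n,k}(w,j-2)$, which falls outside the hypothesis ($m\ge j-1$ is required). The repair is easy and should be stated: since no lex-representative extends $w'\sigma_j$ by $\sigma_\ell$ with $\ell<j-1$, one has $x_{n,k}(w,j-2)=x_{n,k}(w,0)$, which is the total number of leaves below $w$ and is already known from the previous descent step (it is the $c_{i^*}$ chosen there, or $x_{n,k}$ at the root), so it can be carried along as an invariant. Alternatively, adopt the paper's right-counting formulation, which only ever evaluates $x_{n,k}(w,m)$ for $m\ge j-1$ and additionally reduces the per-letter cost from $O(n)$ evaluations to $O(\log n)$ via binary search, using the monotonicity of $m\mapsto x_{n,k}(w,m)$. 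With these two repairs your argument is correct and establishes the theorem.
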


\begin{proof}
In order to generate a random element of $\big(B_n^+\big)_k$, we compute $x_{n,k}$ and choose a random integer $r$ between $1$ and $x_{n,k}$.
It just remains to determine the $r$-th element $w_{(r)}$ of the chain $L_{n,k}$ with respect to $\llex$.
We will find the word $w_{(r)}$ letter by letter.  (Pseudocode is given in Algorithm~\ref{A:random}.)

Suppose that we have computed a prefix $w$ of $w_{(r)}$ and know that there are $\nu$ words in $L_{n,k}$ of the form $w w'$, where $w'\in \A_n^*$ and $w_{(r)}\llex w w'$.  (Initially, $w=\epsilon$ and $\nu = x_{n,k}-r$.)
If $w\ne\epsilon$, let $\sigma_j$ be the last letter of $w$; otherwise let $j=0$.
Let $\sigma_i$ be the next letter of $w_{(r)}$.
Observe that $i\ge j-1$, since otherwise $w_{(r)}$ would not be lexicographically minimal.
By Definition~\ref{D:x_n_k}, we have $m\ge i$ if and only if $x_{n,k}(w,m) \le \nu$.
Hence, we have $i = \min S$, where $S = \big\{ m\in\{j-1,\dots,n-1\} \mid x_{n,k}(w,m) \le \nu \big\} \ni n-1$.
We can determine $i$ using binary search in at most $\lceil\log_2(n-1)\rceil$ steps, each step requiring the computation of one number of the form $x_{n,k}(w,m)$.
We then replace $w$ by $w\sigma_i$ and $\nu$ by $\nu-x_{n,k}(w,i)$, and proceed to the next letter of $w_{(r)}$.
(While this does not affect the complexity, we remark that $x_{n,k}(w,i)$ is known from the binary search, so updating $\nu$ just requires one subtraction.)

As $w_{(r)}$ has length $k$, at most $k\lceil\log_2(n-1)\rceil$ steps are required to find $w_{(r)}$; since each step is polynomial in $n$ and $k$, the claim follows.
\end{proof}

\begin{example}
Suppose we want to generate a random braid of length 3 in $B_4^+$ (see Figure~\ref{F:rooted_tree}). Compute $x_{4,3}=19$. Choose a random number $r$ between 1 and 19. Say $r=16$, so $\nu=3$. Now compute $x_{4,3}(\epsilon,2)=4>3=\nu$. Thus, the first letter of $w_{(16)}$ is $\sigma_3$.
Since $x_{4,3}(\epsilon,3)=0$, the value of $\nu$ remains unchanged.

Now we find the second letter of $w_{(16)}$, which must belong to the set $\{\sigma_2,\sigma_3\}$ (as the first letter is $\sigma_3$).
We compute $x_{4,3}(\sigma_3,2)=2\le3=\nu$.  Hence, the second letter of $w_{(16)}$ is $\sigma_2$, and we must replace $\nu$ by $\nu-x_{4,3}(\sigma_3,2)=1$.

Finally, as the second letter of $w_{(16)}$ is $\sigma_2$, the third one could be any letter in $\{\sigma_1,\sigma_2,\sigma_3\}$. We compute  $x_{4,3}(\sigma_3\sigma_2,2)=0\le1=\nu$,
then $x_{4,3}(\sigma_3\sigma_2,1)=1\le1=\nu$, and conclude that the third letter of $w_{(16)}$ is $\sigma_1$.  Thus, $w_{(16)}= \sigma_3\sigma_2\sigma_1$.
\end{example}

Algorithm~\ref{A:random} gives pseudocode for finding $w_{(r)}$, assuming we know how to compute $x_{n,k}(w,m)$ as in the statement of Theorem~\ref{T:Algorithm}.  It contains references to two algorithms which will be introduced in Section~\ref{S:counting_braids} respectively Section~\ref{S:counting_braids_with_prefixes}.
In Section~\ref{S:counting_braids} we will use a known result~\cite{Bronfman} to compute $x_{n,k}(\epsilon,0)=x_{n,k}$ (line~\ref{A:random:Bronfman} of Algorithm~\ref{A:random}).
An algorithm for computing all other required instances of $x_{n,k}(w,m)$ (line~\ref{A:random:count_leaves} of Algorithm~\ref{A:random}) will be given in Section~\ref{S:counting_braids_with_prefixes}.

\begin{algorithm}[ht]
\caption{Producing a uniformly random braid in $B_n^+$ of length $k$}
\label{A:random}
\small\begin{algorithmic}[1]
\REQUIRE{Integers $n\ge2$ and $k\ge 0$.}
\ENSURE{A braid in $B_n^+$ of length $k$.}\smallskip
\STATE{Compute $x_{n,k}$. [Section~\ref{S:counting_braids};
                           equations (\ref{E:x_n_j}) and (\ref{E:h_m_j})]}\label{A:random:Bronfman}
\STATE{Choose a random integer $r\in\{1,\ldots, x_{n,k}\}$ with uniform probability.}\label{A:random:random}
\STATE{$w := \varepsilon$\,;\, $\nu := x_{n,k}-r$\,;\, $a := 0$}
\FOR{$l := 1$ to $k$}
  \STATE{$a := \max\{a-1,1\}$\,;\, $b := n-1$\,;\, $\mu := 0$\quad/* $\mu = x_{n,k}(w,b)$ at all times */}%
                                                                                \label{A:random:for-start}
  \WHILE{$a<b$}
    \STATE{$m := \lfloor\frac{a+b}{2} \rfloor$}\label{A:random:while-start}
    \STATE{Compute $x_{n,k}(w,m)$. [Algorithm~\ref{A:countleaves}]}\label{A:random:count_leaves}
    \IF{$x_{n,k}(w,m) \le \nu$}
      \STATE{$b := m$\,;\, $\mu := x_{n,k}(w,m)$}
    \ELSE
      \STATE{$a := m+1$}
    \ENDIF
  \ENDWHILE\label{A:random:while-end}
  \STATE{$w := w\sigma_a$\,;\, $\nu := \nu - \mu$\quad/* at this point $a=b$, hence $\mu = x_{n,k}(w,a)$ */}
\ENDFOR\label{A:random:for-end}
\RETURN{$b(w)$}
\end{algorithmic}\vskip0.5ex
\end{algorithm}

\section{Counting all positive braids of a given length}\label{S:counting_braids}

In this section we use a formula given by Bronfman~\cite{Bronfman} to describe how to compute the number of elements of $\big(B_n^+\big)_k$ in time respectively space that is polynomial in $n$ and $k$. To our knowledge, the paper~\cite{Bronfman} was never published, but one can find proofs of its main result in~\cite{Albenque-Nadeau} and also in \cite{GM2011}. Bronfman gave a recurrence relation for the growth function of the monoid $B_n^+$. More precisely, if $x_{n,k}$ is the number of elements of length $k$ in $B_n^+$, then the formal power series
$$
     G_n(t)=\sum_{k\geq 0}{x_{n,k}\, t^k}
$$
is the growth function of $B_n^+$.
Deligne~\cite{Deligne} showed that this function was rational, namely the inverse of a polynomial $H_n(t)$. The following recurrence relation to compute this polynomial was given in~\cite{Bronfman}:
\begin{equation}\label{E:Bronfman}
    H_n(t)=\sum_{i=1}^n{(-1)^{i+1}t^{\binom{i}{2}}H_{n-i}(t)}\,,
\end{equation}
where $H_0(t)=H_1(t)=1$.
In particular, $H_n(t)$ is a polynomial of degree $\binom{n}{2}$ such that $H_n(0)=1$.
Denoting the coefficient of $t^m$ in $H_i(t)$ by $h_{i,m}$, it follows from $G_n(t)H_n(t)=1$ that for any $j>0$ one has
\begin{equation}\label{E:x_n_j}
    x_{n,j} =  -\big( x_{n,j-1}h_{n,1} + x_{n,j-2}h_{n,2} + \cdots + x_{n,0}h_{n,j} \big)\;,
\end{equation}
where the sum has at most $\min\{j,\binom{n}{2}\}$ terms, since $h_{n,m}=0$ if $m>\binom{n}{2}$.
Therefore, we can iteratively compute the numbers $x_{n,0},x_{n,1},\ldots,x_{n,k}$, knowing the coefficients $h_{n,1},\ldots,h_{n,k}$.
From~(\ref{E:Bronfman}), the latter can in turn be calculated with the recurrence relation
\begin{equation}\label{E:h_m_j}
   h_{m,j}=\sum_{i=1}^{m}(-1)^{i+1} h_{m-i,j-\binom{i}{2}} \;,
\end{equation}
where we define $h_{m-i,j-\binom{i}{2}}=0$ if $j-\binom{i}{2}<0$.  Note that in order to compute $x_{n,k}$, only the coefficients $h_{m,j}$ for $0\le m\le n$ and $0\le j\le k$ are required.  In particular, time and space required to compute $x_{n,k}$ are polynomial in $k$ and $n$.%
\smallskip

The coefficients $h_{m,j}$ as well as the integers $x_{m,j}$ for $0\le m\le n$ and $0\le j\le k$  will be used later in our algorithm generating random elements of $B_n^+$.
Note that coefficients $h_{m,j}$ that have already been computed do not change if $n$ and $k$ are changed; one merely might have to compute additional coefficients.
Thus, any values of $h_{m,j}$ and $x_{m,j}$ that are computed can be stored once and for all; their computation should be thought of as a precomputation.

\section{Forbidden prefixes}\label{S:Forbidden_prefixes}

It only remains to compute $x_{n,k}(w,m)$, where $m\geq 1$ if $w=\epsilon$, and $m\geq i-1$ if $w=w'\sigma_i$. (Recall Definition~\ref{D:x_n_k}.)
This calculation, which will be done in Section~\ref{S:counting_braids_with_prefixes}, relies on a detailed description of the words $w'$ that can be appended to $w$ so that the concatenation $ww'$ is still a lex-representative in terms of what we call {\it forbidden prefixes}.

Denote by
$$
 L_n = \bigcup_{k\geq 0}L_{n,k} = \bigcup_{k\geq 0}\omega\big((B_n^+)_k\big) \subseteq \A_n^*
$$
the set of words in $\{\sigma_1,\ldots,\sigma_{n-1}\}$ that are lex-representatives.
To simplify notation, let $M=B_n^+$ and $M_k=(B_n^+)_k$ for $k\in\N$.
We denote the length of a word $w\in\A_n^*$ by $|w|$,
and the length of a braid $x\in M$ by $|x|$.

\begin{definition}
For $w\in L_{n}$, we define the set of {\bf forbidden prefixes after~$w$} as
$F_n(w)=\{\alpha\in M\mid w \omega(\alpha) \notin  L_n \}$.
\end{definition}

For a braid $\alpha\in M$, we define the set of {\it multiples} of $\alpha$ as
$\alpha M = \{\alpha \beta \mid \beta\in M\}$.
The following is an important property of the sets of forbidden prefixes.

\begin{lemma}\label{L:quadrants}
Let $w\in L_n$. If $\alpha\in F_n(w)$, then $\alpha M\subseteq F_n(w)$.
\end{lemma}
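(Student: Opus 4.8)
The plan is to show that if appending the lex-representative of $\alpha$ to $w$ produces a non-lex-representative, then appending the lex-representative of any multiple $\alpha\beta$ also fails. First I would unpack the definition: $\alpha\in F_n(w)$ means $w\,\omega(\alpha)\notin L_n$, i.e.\ $w\,\omega(\alpha)$ is not lexicographically minimal among all positive words representing the braid $b(w)\cdot\alpha$. So there exists a strictly $\llex$-smaller word $u$ with $b(u) = b(w)\,\alpha$ and $|u| = |w\,\omega(\alpha)|$.

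The key step is to transport this witness $u$ from $\alpha$ to $\alpha\beta$. Given $\beta\in M$, consider the braid $b(w)\,\alpha\,\beta$, which is represented by the word $w\,\omega(\alpha)\,\omega(\beta)$. I claim $w\,\omega(\alpha)\,\omega(\beta)$ is not lex-minimal: the candidate smaller word is $u\,\omega(\beta)$. Indeed $b(u\,\omega(\beta)) = b(u)\,b(\omega(\beta)) = b(w)\,\alpha\,\beta$, the lengths match, and since $u \llex w\,\omega(\alpha)$ with $|u| = |w\,\omega(\alpha)|$ we get $u\,\omega(\beta) \llex w\,\omega(\alpha)\,\omega(\beta)$ — appending a common suffix to two words of equal length preserves the strict lexicographic inequality. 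Hence $w\,\omega(\alpha)\,\omega(\beta)$ is not the lex-representative of $b(w)\,\alpha\,\beta$.

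The remaining subtlety is that $F_n(w)$ is defined in terms of $w\,\omega(\alpha\beta)$, not $w\,\omega(\alpha)\,\omega(\beta)$, and in general $\omega(\alpha)\,\omega(\beta)\ne\omega(\alpha\beta)$. I would handle this by observing that both words represent the same braid $\alpha\beta$ (in $M$), hence $w\,\omega(\alpha\beta)$ and $w\,\omega(\alpha)\,\omega(\beta)$ both represent $b(w)\,\alpha\beta$; since a braid has a unique lex-representative, and we have just exhibited a word strictly smaller than $w\,\omega(\alpha)\,\omega(\beta)$ representing that braid, the word $w\,\omega(\alpha\beta)$ — whatever it is — is also not that unique minimum unless it equals it, but it cannot, because it would then be $\llex$-smaller than $w\,\omega(\alpha)\,\omega(\beta)$, contradicting... actually the cleanest route: $\omega\big(b(w)\,\alpha\beta\big)$ is the unique $\llex$-minimum; we found $u\,\omega(\beta)$ strictly below $w\,\omega(\alpha)\,\omega(\beta)$, so the minimum is strictly below $w\,\omega(\alpha)\,\omega(\beta)$ as well, hence certainly not equal to it, and in particular $w\,\omega(\alpha)\,\omega(\beta)\notin L_n$. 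Then I must still pass to $w\,\omega(\alpha\beta)$: note $|w\,\omega(\alpha\beta)| = |w\,\omega(\alpha)\,\omega(\beta)|$ and they represent the same braid, so $w\,\omega(\alpha\beta)\in L_n$ would make it the lex-representative of $b(w)\alpha\beta$, forcing $w\,\omega(\alpha\beta) \lelex w\,\omega(\alpha)\omega(\beta)$, which is consistent — but it would also have to be $\lelex u\,\omega(\beta)$, and conversely any word below it representing the same braid is impossible by minimality. The contradiction I want is simpler than I am making it: if $w\,\omega(\alpha\beta)\in L_n$ it is the lex-minimum, but $w\,\omega(\alpha)\omega(\beta)$ is a competing representative, so $w\,\omega(\alpha\beta)\lelex w\,\omega(\alpha)\omega(\beta)$; yet also $w$ is a prefix of the minimum, and by Definition the minimum restricted...

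The main obstacle, then, is precisely this bookkeeping between $\omega(\alpha\beta)$ and $\omega(\alpha)\,\omega(\beta)$, and the right way around it is to not use $\omega(\alpha)\omega(\beta)$ at all: directly compare $u\,\omega(\beta)$ with $w\,\omega(\alpha\beta)$. We have $b\big(u\,\omega(\beta)\big) = b(w)\alpha\beta = b\big(w\,\omega(\alpha\beta)\big)$ and equal lengths, so if $w\,\omega(\alpha\beta)\in L_n$ then $w\,\omega(\alpha\beta)\lelex u\,\omega(\beta)$. Since $w$ is a common prefix of both sides, this forces $\omega(\alpha\beta)\lelex$ (the suffix of $u\,\omega(\beta)$ after $w$); but $u\llex w\,\omega(\alpha)$ means $u = w'\,v$ where either $w'$ is a proper prefix of $w$ with a smaller letter following — impossible if $w\,\omega(\alpha\beta)$ shares prefix $w$ — or $w' = w$ and $v\llex\omega(\alpha)$, giving the suffix $v\,\omega(\beta)\llex\omega(\alpha)\,\omega(\beta)$, and $b(v) = \alpha$ so $v\,\omega(\beta)$ represents $\alpha\beta$, contradicting $\omega(\alpha\beta)\lelex v\,\omega(\beta)$ only if also $v\,\omega(\beta)\llex\omega(\alpha\beta)$ — which holds because $\omega(\alpha\beta)\lelex\omega(\alpha)$'s... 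I would finish by cleanly arguing: $v\,\omega(\beta)$ and $\omega(\alpha\beta)$ both represent $\alpha\beta$ with equal length, so $\omega(\alpha\beta)\lelex v\,\omega(\beta)$; combined with the displayed inequality $\omega(\alpha\beta)\lelex v\,\omega(\beta)$ we need the reverse to derive a contradiction, which comes from the case analysis on where $u$ and $w\,\omega(\alpha\beta)$ first differ. I expect this case analysis on the position of the first disagreement to be the genuinely delicate part; everything else is routine manipulation of the lexicographic order and the fact that $b$ is a length-preserving monoid morphism. Thus $\alpha\beta\in F_n(w)$, and since $\beta\in M$ was arbitrary, $\alpha M\subseteq F_n(w)$.
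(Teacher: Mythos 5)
Your overall strategy is the paper's: produce a representative of $b(w)\alpha\beta$ that is strictly $\llex$-smaller than $w\,\omega(\alpha\beta)$ by appending $\omega(\beta)$ to a witness $u$ for $\alpha\in F_n(w)$, and you correctly recognise that the comparison must in the end be made against $w\,\omega(\alpha\beta)$ rather than $w\,\omega(\alpha)\,\omega(\beta)$. But the argument is not closed. The dichotomy you set up --- either $u$ first differs from $w\,\omega(\alpha)$ within the first $|w|$ letters, or $u=wv$ with $v\llex\omega(\alpha)$ --- is the right one, and the first branch does give $u\,\omega(\beta)\llex w\,z$ for every word $z$ of the appropriate length, in particular for $z=\omega(\alpha\beta)$, which finishes the proof in that branch. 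The second branch, however, you leave hanging (``I expect this case analysis \dots to be the genuinely delicate part''), circling between $\omega(\alpha\beta)\lelex v\,\omega(\beta)$ and its reverse without reaching a contradiction. The missing observation is one line: in that branch $v$ is a word of length $|\alpha|$ with $b(v)=\alpha$ (by left cancellation in $M$) and $v\llex\omega(\alpha)$, which contradicts the definition of $\omega(\alpha)$ as the $\llex$-minimum of $b^{-1}(\alpha)$. So the second branch is vacuous, every witness $u$ must already deviate from $w$ among its first $|w|$ letters, and the first branch alone completes the proof. This is precisely the paper's tersely stated step $\omega(b(w)\alpha)_{|_{|w|}}\llex w$: the paper takes $u=\omega(b(w)\alpha)$ and records exactly the conclusion of this vacuity argument. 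A further slip worth flagging: midway you assert ``since $w$ is a common prefix of both sides,'' which is false for the witness $u$ --- indeed the whole point is that $w$ is \emph{not} a prefix of $u$ --- and this is what sends the second half of your write-up in circles.
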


\begin{proof}
For $u\in\A_n^*$ and $k\in\N$, let $u_{|_k}$ denote the initial subword of length $k$ of~$u$.
As $\alpha\in F_n(w)$, we have $\omega(b(w)\alpha)\llex w\omega(\alpha)$, and thus
$\omega(b(w)\alpha)_{|_{|w|}}\llex w$.
For any $\beta\in M$, the latter implies
$\omega(b(w)\alpha\beta) \lelex \omega(b(w)\alpha)\omega(\beta) \llex w\omega(\alpha\beta)$,
proving the claim.
\end{proof}

As $M$ is a cancellative monoid, it admits a well-defined partial order $\preccurlyeq$, where $a\preccurlyeq b$ if there exists $c\in M$ such that $ac=b$; in this case we say that $a$ is a {\it prefix} of $b$.
It is well known that $(M,\preccurlyeq)$ is Noetherian, that is, that there are no infinite descending chains with respect to $\preccurlyeq$ in $M$.
We can then define a special subset of the forbidden prefixes.

\begin{definition}\label{D:forbidden_prefixes}
For $w\in L_n$, the set of {\bf minimal forbidden prefixes after~$w$}, denoted $F_n^{\min}(w)$, is the set of minimal elements, with respect to $\preccurlyeq$, in $F_n(w)$.
\end{definition}

From Lemma~\ref{L:quadrants} and the Noetherianity of $M$, one has
$F_n(w)=\bigcup_{\alpha\in F_n^{\min}(w)}\alpha M$.

The rest of this section will be devoted to the study of $F_n^{\min}(w)$.
We will show that it is a finite set,
we will describe its elements,
and we will show how to compute them in linear time with respect to $n$ and $|w|$.

First we recall a particularly useful property of the prefix order $\preccurlyeq$ of $M$: It is a lattice order \cite{Epstein},
that is, for any $a,b\in M$ there are a greatest common prefix $a\wedge b$ and a least common multiple $a\vee b$ with respect to $\preccurlyeq$. In the particular case of Artin generators, we have $\sigma_i\vee \sigma_j=\sigma_i\sigma_j = \sigma_j\sigma_i$ if $|i-j|>1$, and $\sigma_i\vee \sigma_j = \sigma_i\sigma_j\sigma_i = \sigma_j\sigma_i\sigma_j $ if $|i-j|=1$.

Given two elements $a,b\in M$, there are positive elements $a\backslash b$ and $b\backslash a$, such that $a\vee b = a(a\backslash b) = b (b\backslash a)$. As the monoid $M$ embeds in the group $B_n$, we can write $a\backslash b= a^{-1}(a\vee b)$ and  $b\backslash a = b^{-1}(a\vee b)$. Notice that $\sigma_j\backslash \sigma_i=\sigma_i$ if $|i-j|>1$ and $\sigma_j\backslash \sigma_i = \sigma_i\sigma_j$ if $|i-j|=1$.

We shall frequently use the following property:

\begin{lemma}
If $a,b,c\in M$, then $a\preccurlyeq bc$ is equivalent to $b\backslash a \preccurlyeq c$.
\end{lemma}

\begin{proof}
As $b\preccurlyeq bc$, it follows that $a\preccurlyeq bc$ if and only if $a\vee b \preccurlyeq bc$;
the latter is equivalent to $b^{-1}(a\vee b) \preccurlyeq c$, so the claim is shown.
\end{proof}

The following two results form the basis for an inductive description of $F_n^{\min}(w)$.

\begin{proposition}\label{P:minimal_generator}
\quad\vspace*{-1ex}
\begin{enumerate}[\textup{(}a\textup{)}]
\item\label{P:minimal_generator:a}
$F_n(\epsilon)=F_n^{\min}(\epsilon)=\emptyset$.
\item\label{P:minimal_generator:b}
$F_n^{\min}(\sigma_j)=\{\sigma_1,\ldots,\sigma_{j-2},\sigma_{j-1}\sigma_j\}$.

(Here and in the sequel, we use the convention that words involving indices less than 1 are ignored. That is, $F_n^{\min}(\sigma_1)=\emptyset$ and $F_n^{\min}(\sigma_2)=\{\sigma_1\sigma_2\}$.)
\end{enumerate}
\end{proposition}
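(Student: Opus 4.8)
The plan is to prove the two items directly from the definitions. For part \noclaim{a}, I would observe that $\epsilon \cdot \omega(\alpha) = \omega(\alpha)$ is a lex-representative for every $\alpha \in M$ by construction, so no braid is forbidden after the empty word; hence $F_n(\epsilon) = \emptyset$ and a fortiori $F_n^{\min}(\epsilon) = \emptyset$.

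For part \noclaim{b}, fix $j$ and set $w = \sigma_j$. I would split the argument into two halves: first show that each of $\sigma_1, \ldots, \sigma_{j-2}$ and $\sigma_{j-1}\sigma_j$ lies in $F_n(\sigma_j)$, and second show that every element of $F_n(\sigma_j)$ is a multiple (with respect to $\preccurlyeq$) of one of these, so that these are precisely the minimal ones. For the first half: if $i \le j-2$, then $\sigma_j \sigma_i = \sigma_i \sigma_j$ in $M$ (commuting generators), and since $\sigma_i \llex \sigma_j$, the word $\sigma_i \sigma_j \llex \sigma_j \sigma_i = \sigma_j\,\omega(\sigma_i)$, so $\sigma_j \omega(\sigma_i) \notin L_n$, giving $\sigma_i \in F_n(\sigma_j)$. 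For $\alpha = \sigma_{j-1}\sigma_j$: we have $\sigma_{j-1}\sigma_j$ is already reduced, $\omega(\sigma_{j-1}\sigma_j) = \sigma_{j-1}\sigma_j$, and $\sigma_j \sigma_{j-1}\sigma_j = \sigma_{j-1}\sigma_j\sigma_{j-1}$ in $M$; since $\sigma_{j-1}\sigma_j\sigma_{j-1} \llex \sigma_j\sigma_{j-1}\sigma_j$, we get $\sigma_j\,\omega(\sigma_{j-1}\sigma_j) \notin L_n$, so $\sigma_{j-1}\sigma_j \in F_n(\sigma_j)$. It remains to check that none of these is a prefix of another (clearly true: the $\sigma_i$ are pairwise incomparable atoms, and $\sigma_{j-1}\sigma_j$ has $\sigma_{j-1}$ as its only proper atom-prefix, which is not among $\sigma_1,\dots,\sigma_{j-2}$), so all listed elements are minimal in the set they generate.

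The main work is the second half: showing $F_n(\sigma_j) \subseteq \bigcup_{\alpha \in \text{list}} \alpha M$, equivalently that if $\alpha \in M$ is not a multiple of any listed element then $\sigma_j \omega(\alpha) \in L_n$. The hypothesis "$\alpha$ is not a multiple of $\sigma_1, \ldots, \sigma_{j-2}$" means $\sigma_1,\ldots,\sigma_{j-2} \not\preccurlyeq \alpha$; since the atom-prefixes of $\alpha$ are exactly the generators appearing as the first letter of some geodesic word for $\alpha$, this forces the first letter of $\omega(\alpha)$ to be some $\sigma_r$ with $r \ge j-1$. If $r \ge j+1$ then $\sigma_j$ and $\sigma_r$ commute and do not interact, and a short argument shows $\sigma_j \omega(\alpha)$ is still lex-minimal; if $r \ge j$... wait, I need to be careful about the case $r = j-1$. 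The extra hypothesis "$\alpha$ is not a multiple of $\sigma_{j-1}\sigma_j$" rules out the problematic configuration: if $\omega(\alpha)$ starts with $\sigma_{j-1}$, then $\sigma_{j-1}\sigma_j \not\preccurlyeq \alpha$ means $\sigma_{j-1}\backslash \alpha$ (which equals $\alpha$ with the leading $\sigma_{j-1}$ removed, up to the relevant braid relation) does not have $\sigma_j$ as a prefix, which is exactly what one needs to conclude that prepending $\sigma_j$ and rewriting does not produce a lexicographically smaller word. I would formalise this using the lemma $a \preccurlyeq bc \iff b\backslash a \preccurlyeq c$ together with the explicit formulas $\sigma_{j-1}\backslash\sigma_j = \sigma_j\sigma_{j-1}$ and $\sigma_j\backslash\sigma_{j-1} = \sigma_{j-1}\sigma_j$ recalled above.

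The step I expect to be the real obstacle is the rigorous argument, in the second half, that $\sigma_j \omega(\alpha)$ is genuinely the lex-representative of $b(\sigma_j)\alpha = \sigma_j\alpha$ — i.e., that no rewriting of $\sigma_j \omega(\alpha)$ can lower the first letter below $\sigma_j$ or, with the first letter equal to $\sigma_j$, lower a subsequent letter below what $\omega(\alpha)$ prescribes. This needs a clean characterisation of "which atoms are prefixes of $\sigma_j\alpha$" in terms of the atoms that are prefixes of $\alpha$, which in turn rests on the lattice structure of $(M,\preccurlyeq)$ and the left-divisibility behaviour of the Artin generators; the bookkeeping with the two subcases ($\omega(\alpha)$ starting with $\sigma_{j-1}$ versus starting with $\sigma_r$, $r \ge j$) and the role of the two types of generators in the minimal-forbidden-prefix list is where the argument must be written out carefully. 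I would be tempted to phrase it as: the smallest atom dividing $\sigma_j\alpha$ is $\min\{j,\, r\}$ where $\sigma_r$ is the smallest atom dividing $\alpha$ (when $r \ne j-1$), and is $\sigma_{j-1}$ when $\sigma_{j-1} \preccurlyeq \alpha$; combined with an inductive peeling-off of the first letter, this yields the claim, and simultaneously sets up the inductive description of $F_n^{\min}(w)$ for general $w$ that the following results will need.
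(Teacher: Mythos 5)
Your part \noclaim{a} and the inclusion $\{\sigma_1,\dots,\sigma_{j-2},\sigma_{j-1}\sigma_j\}\subseteq F_n^{\min}(\sigma_j)$ are fine (the paper treats these as essentially obvious). The gap is in the converse inclusion, and it is exactly the step you yourself flag as ``the real obstacle'': you set out to prove the contrapositive --- that if $\alpha$ is not a multiple of a listed element then $\sigma_j\omega(\alpha)\in L_n$ --- which forces you to certify lex-minimality of an entire word, letter by letter. Your sketch via ``the smallest atom dividing $\sigma_j\alpha$'' plus ``inductive peeling-off of the first letter'' is left as a plan rather than an argument, so as written the proof is incomplete.

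The obstacle dissolves if you argue in the forward direction, as the paper does, using left-cancellativity of $M$. Every positive word representing $\sigma_j\alpha$ whose first letter is $\sigma_j$ has the form $\sigma_j u$ with $b(u)=\alpha$; hence the lexicographically smallest such word is $\sigma_j\omega(\alpha)$ itself. Consequently, if $\alpha\in F_n(\sigma_j)$, i.e.\ $\omega(\sigma_j\alpha)\llex\sigma_j\omega(\alpha)$, the two words must already differ in their \emph{first} letter, so $\omega(\sigma_j\alpha)$ begins with some $\sigma_i$ with $i<j$, i.e.\ $\sigma_i\preccurlyeq\sigma_j\alpha$. Your own tools then finish the job: the lemma $a\preccurlyeq bc\iff b\backslash a\preccurlyeq c$ gives $\sigma_j\backslash\sigma_i\preccurlyeq\alpha$, and $\{\sigma_j\backslash\sigma_i\mid 1\le i\le j-1\}=\{\sigma_1,\dots,\sigma_{j-2},\sigma_{j-1}\sigma_j\}$. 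In particular your worry about ``lowering a subsequent letter while keeping the first letter equal to $\sigma_j$'' is moot: by cancellativity no such lowering is possible once the tail is $\omega(\alpha)$. (A minor further point: your minimality check for $\sigma_{j-1}\sigma_j$ needs $\sigma_{j-1}\notin F_n(\sigma_j)$, not merely that $\sigma_{j-1}$ is absent from your list; this too follows immediately from the characterisation of $F_n(\sigma_j)$ just given.)
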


\begin{proof}
It is obvious that
$\{\sigma_1,\ldots,\sigma_{j-2},\sigma_{j-1}\sigma_j\} \subseteq F_n(\sigma_j)$.  Indeed, $\sigma_1,\ldots,\sigma_{j-2}$ are atoms and $\sigma_{j-1}\notin F_n(\sigma_j)$,
so $\{\sigma_1,\ldots,\sigma_{j-2},\sigma_{j-1}\sigma_j\} \subseteq F_n^{\min}(\sigma_j)$.

Conversely, if $\alpha\in F_n(\sigma_j)$ then $\omega(\sigma_j \alpha)\llex \sigma_j \omega(\alpha)$, and hence $\omega(\sigma_j \alpha)_{|_1}\llex \sigma_j$.
This means that $\sigma_i\preccurlyeq \sigma_j\alpha$ for some $i<j$,
whence $\alpha$ admits a prefix from the set
$ \{\sigma_j\backslash \sigma_i \mid i=1,\ldots,j-1\}
  = \{\sigma_1,\ldots,\sigma_{j-2},\sigma_{j-1}\sigma_j\}$.

This proves \claim{\ref{P:minimal_generator:b}}.  \claim{\ref{P:minimal_generator:a}} is trivial.
\end{proof}

\begin{proposition}\label{P:minimal_vw}
Let $w\in L_n$ with $|w|\geq 1$.  If $w=v\sigma_j$, then $F_n^{\min}(w)$ is the set of $\preccurlyeq$-minimal elements in
$$
   \{\sigma_1,\ldots,\sigma_{j-2},\sigma_{j-1}\sigma_j\}
      \cup \{\sigma_j\backslash \beta \mid \beta\in F_n^{\min}(v)\}.
$$
\end{proposition}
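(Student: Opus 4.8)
The plan is to prove the two inclusions separately, using Proposition~\ref{P:minimal_generator} as the base case and the characterisation of $F_n(w)$ via prefixes. Write $w=v\sigma_j$ and set
$$
 T = \{\sigma_1,\ldots,\sigma_{j-2},\sigma_{j-1}\sigma_j\}
      \cup \{\sigma_j\backslash \beta \mid \beta\in F_n^{\min}(v)\}.
$$
First I would show $F_n(w)=\bigcup_{\alpha\in T}\alpha M$, which by Lemma~\ref{L:quadrants} and Noetherianity immediately gives that $F_n^{\min}(w)$ consists exactly of the $\preccurlyeq$-minimal elements of $T$ (since every element of $T$ lies in $F_n(w)$, and every minimal forbidden prefix divides some element of $T$, hence is itself $\preccurlyeq$ some element of $T$, hence — being forbidden and minimal — equals a minimal element of $T$).

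For the inclusion $\bigcup_{\alpha\in T}\alpha M\subseteq F_n(w)$: the multiples of $\sigma_1,\dots,\sigma_{j-2},\sigma_{j-1}\sigma_j$ are forbidden after $\sigma_j$ alone (Proposition~\ref{P:minimal_generator}\noclaim{b}), and a prefix that is forbidden after the suffix $\sigma_j$ is a fortiori forbidden after $w=v\sigma_j$ — more precisely, if $\alpha\in F_n(\sigma_j)$ then $\omega(\sigma_j\alpha)_{|_1}\llex\sigma_j$, so $\sigma_i\preccurlyeq\sigma_j\alpha$ for some $i<j$, whence $\sigma_i\preccurlyeq v\sigma_j\alpha$ gives $\omega(w\alpha)_{|_{|w|}}\llex w$ and $\alpha\in F_n(w)$. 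For the second family, if $\beta\in F_n^{\min}(v)$ then $\omega(v\beta)_{|_{|v|}}\llex v$; I need that $\sigma_j\backslash\beta\in F_n(w)$, i.e. that $\omega(w(\sigma_j\backslash\beta))_{|_{|w|}}\llex w$. Here I would use that $v\sigma_j(\sigma_j\backslash\beta)=v(\sigma_j\vee\beta)$ has $v\beta$ as a prefix, so its lex-representative, truncated to length $|v|$, is $\lelex\omega(v\beta)_{|_{|v|}}\llex v$, and then appending one more letter cannot increase past $v\sigma_j=w$ at position $|w|$ — this is where the precise tracking of lexicographic comparisons under taking prefixes (as in the proof of Lemma~\ref{L:quadrants}) is needed, and I expect this direction to be the delicate bookkeeping.

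For the reverse inclusion $F_n(w)\subseteq\bigcup_{\alpha\in T}\alpha M$: take $\gamma\in F_n(w)$, so $\omega(w\gamma)\llex w\omega(\gamma)$, hence $\omega(w\gamma)_{|_{|w|}}\lelex w$. If the truncation is already $\llex w$ at some position $\le|v|$, then $\omega(v\sigma_j\gamma)_{|_{|v|}}\lelex v$ with the inequality being at position $\le|v|$; distinguishing whether it is strict at position $\le |v|$ (which forces $b(v)(\sigma_j\gamma)\in F_n(v)$ after checking $\omega(v(\sigma_j\gamma))\llex v\omega(\sigma_j\gamma)$, so $\sigma_j\gamma$ has a prefix in $F_n^{\min}(v)$, i.e. some $\beta\in F_n^{\min}(v)$ with $\beta\preccurlyeq\sigma_j\gamma$, whence $\sigma_j\backslash\beta\preccurlyeq\gamma$ by the last Lemma) or equals $v$ and the first discrepancy is at position $|w|$ (which forces $\sigma_i\preccurlyeq\sigma_j\gamma$ for some $i<j$, giving a prefix of $\gamma$ from $\{\sigma_j\backslash\sigma_i\}=\{\sigma_1,\dots,\sigma_{j-2},\sigma_{j-1}\sigma_j\}$) exhausts the cases. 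The main obstacle is not any single step but handling the case split cleanly — in particular ensuring in the first case that the hypothesis $\omega(v(\sigma_j\gamma))\llex v\omega(\sigma_j\gamma)$ really does hold, which follows because $\omega(v(\sigma_j\gamma))_{|_{|v|}}\llex v\lelex \big(v\omega(\sigma_j\gamma)\big)_{|_{|v|}}$. Once the case analysis is organised around the position of the first lexicographic discrepancy relative to $|v|$, everything reduces to the two Lemmas already proved.
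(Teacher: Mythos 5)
Your proposal is correct and takes essentially the same route as the paper: the paper also resolves a forbidden prefix $\alpha$ after $w=v\sigma_j$ by splitting into the cases ``$\alpha$ is already forbidden after $\sigma_j$'' (yielding a prefix in $\{\sigma_1,\ldots,\sigma_{j-2},\sigma_{j-1}\sigma_j\}$ via Proposition~\ref{P:minimal_generator}) and ``$\sigma_j\alpha$ is forbidden after $v$'' (yielding a prefix $\sigma_j\backslash\beta$ via the lemma $a\preccurlyeq bc\Leftrightarrow b\backslash a\preccurlyeq c$), the only difference being that the paper triggers the split on whether $\sigma_j\omega(\alpha)\in L_n$ rather than on the position of the first lexicographic discrepancy. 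One small repair: the inference ``$\sigma_i\preccurlyeq v\sigma_j\alpha$ gives $\omega(w\alpha)_{|_{|w|}}\llex w$'' does not follow as stated; argue instead that $\sigma_i\preccurlyeq\sigma_j\alpha$ with $i<j$ forces $\omega(\sigma_j\alpha)\llex\sigma_j\omega(\alpha)$, hence $v\,\omega(\sigma_j\alpha)\llex w\,\omega(\alpha)$, and since both words represent $b(w)\alpha$ this already shows $w\omega(\alpha)\notin L_n$.
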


\begin{proof}
The case $|w|=1$ is clear from Proposition~\ref{P:minimal_generator}, as $F_n^{\min}(\epsilon)=\emptyset$. Suppose that $|w|>1$ and let $\alpha$ be a forbidden prefix after $w=v\sigma_j$, that is,
$v\sigma_j \omega(\alpha)\notin L_n$.
We will distinguish two cases: either $\sigma_j \omega(\alpha)$ is a lex-representative, or not.

If $\sigma_j \omega(\alpha)$ is not a lex-representative, then $\alpha$ is a forbidden prefix after $\sigma_j$.  By Proposition~\ref{P:minimal_generator}, in this case $\alpha$ admits a prefix $\alpha'\in\{\sigma_1,\ldots,\sigma_{j-2},\sigma_{j-1}\sigma_j\}$.

If $\sigma_j \omega(\alpha)$ is a lex-representative, then $\sigma_j \alpha$ is a forbidden prefix after $v$, hence there is $\beta\in F_n^{\min}(v)$ such that $\beta\preccurlyeq \sigma_j\alpha$, that is, $\sigma_j\backslash \beta \preccurlyeq \alpha$.

We have then shown that if $\alpha$ is a forbidden prefix after $w$, then $\alpha$ admits a prefix
$\alpha'\in\{\sigma_1,\ldots,\sigma_{j-2},\sigma_{j-1}\sigma_j\}
   \cup \{\sigma_j\backslash \beta \mid \beta\in F_n^{\min}(v)\}$.
Since the elements of the latter set are forbidden prefixes after $w$ by construction, the claim follows.
\end{proof}

The following lemma will give us control over the elements of the form $\sigma_j\backslash \beta$.

\begin{lemma}\label{L:calculations}
For $i\in\{2,\ldots,n-1\}$ and $j\in\{1,\ldots,n-1\}$, one has:
$$
\sigma_j\backslash \sigma_{i-1}\sigma_i =
   \begin{cases}
     (\sigma_{i-1}\sigma_{i})(\sigma_{i-2}\sigma_{i-1}) & \mbox{ if $j=i-2$} \\
     \sigma_{i}                         & \mbox{ if $j=i-1$}   \\
     \sigma_{i-1}\sigma_{i}\sigma_{i+1} & \mbox{ if $j=i+1$}  \\
     \sigma_{i-1}\sigma_i               & \mbox{ otherwise}
    \end{cases}
$$
For $j\in\{1,\ldots,n-1\}$ and $1\leq m\leq i\leq n-1$, one has:
$$
\sigma_j\backslash \sigma_{i}\sigma_{i-1}\cdots \sigma_m =
   \begin{cases}
     \sigma_{i}\sigma_{i-1}\cdots \sigma_m      & \mbox{if $j\neq m-1, i, i+1$}  \\
     \sigma_{i}\sigma_{i-1}\cdots \sigma_{m-1}  & \mbox{if $j=m-1$}              \\
     \sigma_{i-1}\sigma_{i-2}\cdots \sigma_{m}  & \mbox{if $j=i$}                \\
     (\sigma_{i}\sigma_{i+1})(\sigma_{i-1}\sigma_i)\cdots (\sigma_{m} \sigma_{m+1})
                                                & \mbox{if $j= i+1$}
   \end{cases}
$$
\end{lemma}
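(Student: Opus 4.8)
The plan is to prove Lemma~\ref{L:calculations} by direct computation, using only the defining identity $\sigma_j\backslash\beta = \sigma_j^{-1}(\sigma_j\vee\beta)$ together with the explicit lcm's of atoms recalled just before the lemma, namely $\sigma_i\vee\sigma_j = \sigma_i\sigma_j = \sigma_j\sigma_i$ when $|i-j|>1$ and $\sigma_i\vee\sigma_j = \sigma_i\sigma_j\sigma_i = \sigma_j\sigma_i\sigma_j$ when $|i-j|=1$. The key combinatorial fact I would exploit is the following: if $\beta$ is a positive word in which the \emph{only} letter not commuting with $\sigma_j$ is a single occurrence of $\sigma_{j\pm1}$ (and $\sigma_j$ itself does not occur in $\beta$), then to form $\sigma_j\vee\beta$ one can slide $\sigma_j$ rightward through the commuting prefix of $\beta$ until it meets that occurrence of $\sigma_{j\pm1}$, apply the braid relation there to produce $\sigma_j\sigma_{j\pm1}\sigma_j = \sigma_{j\pm1}\sigma_j\sigma_{j\pm1}$, and then cancel the leading $\sigma_j$. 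This reduces each case to a short local manipulation.

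For the first formula ($\beta = \sigma_{i-1}\sigma_i$) I would simply run through the four cases for $j$. If $j \notin\{i-2,i-1,i,i+1\}$ then $\sigma_j$ commutes with both $\sigma_{i-1}$ and $\sigma_i$, so $\sigma_j\vee\beta = \sigma_j\sigma_{i-1}\sigma_i$ and cancelling gives $\sigma_{i-1}\sigma_i$; the case $j=i$ is vacuous since then $\sigma_j \preccurlyeq \beta$ — wait, actually $j=i$ is not listed as a separate case, and indeed $\sigma_i\backslash\sigma_{i-1}\sigma_i = \sigma_{i-1}$ falls under ``otherwise''? No: one checks $\sigma_i\vee(\sigma_{i-1}\sigma_i)$ directly. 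I would handle $j=i$ as part of the ``otherwise'' branch, verifying $\sigma_i\vee\sigma_{i-1}\sigma_i = \sigma_{i-1}\sigma_i\sigma_{i-1}\sigma_i$ is false and instead computing it honestly; most likely the clean statement is that for $j=i$ one still gets $\sigma_{i-1}\sigma_i$ after noting $\sigma_i \preccurlyeq \sigma_{i-1}\sigma_i$ forces $\sigma_i\backslash(\sigma_{i-1}\sigma_i) = \sigma_{i-1}$... I will need to be careful here and check the edge index $j=i$ explicitly against Proposition~\ref{P:minimal_vw} to see which branch it belongs to. For $j = i-1$: $\sigma_{i-1}\vee\sigma_{i-1}\sigma_i = \sigma_{i-1}\sigma_i$ (since $\sigma_{i-1}\preccurlyeq\beta$), so the quotient is $\sigma_i$. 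For $j=i-2$: $\sigma_{i-2}$ commutes with $\sigma_i$ but not $\sigma_{i-1}$, so $\sigma_{i-2}\vee(\sigma_{i-1}\sigma_i) = \sigma_{i-2}\vee(\sigma_i\sigma_{i-1}) = \sigma_i(\sigma_{i-2}\vee\sigma_{i-1}) = \sigma_i\sigma_{i-2}\sigma_{i-1}\sigma_{i-2} = \sigma_{i-2}\sigma_i\sigma_{i-1}\sigma_{i-2}$, wait I should present this so that a leading $\sigma_{i-2}$ is visible: $\sigma_{i-2}\vee\sigma_{i-1}\sigma_i = \sigma_{i-2}\sigma_{i-1}\sigma_{i-2}\sigma_i$? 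I would reorganize: $\sigma_{i-1}\sigma_i = \sigma_{i-1}\sigma_i$, and the lcm with $\sigma_{i-2}$ should be computed by first taking $\sigma_{i-2}\vee\sigma_{i-1} = \sigma_{i-2}\sigma_{i-1}\sigma_{i-2}$ and then joining with the remaining $\sigma_i$, ending with quotient $(\sigma_{i-1}\sigma_i)(\sigma_{i-2}\sigma_{i-1})$ as claimed — the verification is a short rewrite using two braid relations and one commutation. For $j=i+1$: symmetric, $\sigma_{i+1}$ commutes with $\sigma_{i-1}$ but not $\sigma_i$, giving quotient $\sigma_{i-1}\sigma_i\sigma_{i+1}$.

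For the second formula ($\beta = \sigma_i\sigma_{i-1}\cdots\sigma_m$, a descending run) the same principle applies but I would phrase it as an induction on the length of the run, or better, observe directly that in the word $\sigma_j\,\sigma_i\sigma_{i-1}\cdots\sigma_m$ the generator $\sigma_j$ meets a non-commuting letter at most at one or two adjacent positions. If $j \notin\{m-1,\dots,i+1\}$ then $\sigma_j$ commutes with every letter of $\beta$ (note $j$ could lie strictly between $m$ and $i$, in which case $\sigma_j$ \emph{does} occur in $\beta$; but then $\sigma_j \preccurlyeq \beta$ and one checks the quotient is still $\beta$ with that $\sigma_j$ excised appropriately — this needs the observation that in a descending run $\sigma_i\cdots\sigma_m$, each $\sigma_j$ with $m\le j\le i$ is a prefix after using commutations, since $\sigma_j$ commutes with $\sigma_i,\dots,\sigma_{j+2}$). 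Hmm, I realize the ``$j\neq m-1,i,i+1$'' branch must be interpreted with care when $m \le j \le i-1$: there $\sigma_j$ and $\sigma_{j+1}$ fail to commute, $\sigma_{j+1}$ appearing to the left of $\sigma_j$ in the run, so again $\sigma_j\vee\beta$ is obtained by one braid relation, and the quotient works out to $\beta$ unchanged because $\sigma_j\sigma_{j+1}\sigma_j = \sigma_{j+1}\sigma_j\sigma_{j+1}$ lets $\sigma_j$ be absorbed. For $j = m-1$: $\sigma_{m-1}$ meets only $\sigma_m$ at the tail, giving quotient $\sigma_i\sigma_{i-1}\cdots\sigma_{m-1}$. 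For $j = i$: $\sigma_i\preccurlyeq\beta$ already, so quotient is $\sigma_{i-1}\sigma_{i-2}\cdots\sigma_m$. For $j=i+1$: $\sigma_{i+1}$ meets the leading $\sigma_i$, braid relation produces $\sigma_i\sigma_{i+1}$ at the front, and then I would push $\sigma_{i+1}$ rightward — it commutes with $\sigma_{i-1},\dots$ down until... no, after the first braid relation we have $\sigma_{i+1}\sigma_i\sigma_{i-1}\cdots\sigma_m \rightsquigarrow \sigma_i\sigma_{i+1}\sigma_i\sigma_{i-1}\cdots$; repeating gives the telescoping product $(\sigma_i\sigma_{i+1})(\sigma_{i-1}\sigma_i)\cdots(\sigma_m\sigma_{m+1})$ claimed, and after stripping the leading $\sigma_{i+1}$... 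I will need to track the bookkeeping so the leading $\sigma_{i+1}$ in $\sigma_j\vee\beta$ is explicit and cancels correctly, which is exactly where the main obstacle lies. The hard part throughout is not any single braid relation but presenting $\sigma_j\vee\beta$ with its leading $\sigma_j$ manifestly visible so that dividing it out is immediate; I would organize the proof around an explicit normal form for $\sigma_j\vee\beta$ in each case, verify it is a common multiple and has the right length (hence \emph{is} the lcm, since lengths are additive: $|\sigma_j\vee\beta| = 1 + |\sigma_j\backslash\beta|$), and then read off the quotient.
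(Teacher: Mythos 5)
Your overall strategy --- exhibit in each case an explicit positive word $c$ with $\sigma_j\preccurlyeq c$ and $\beta\preccurlyeq c$, then strip the leading $\sigma_j$ --- only ever proves that $c$ is a \emph{common} multiple, i.e.\ that $\sigma_j\vee\beta\preccurlyeq c$. The minimality half is where the content of the lemma lies, and your proposed justification for it is circular: you say you will check that $c$ ``has the right length (hence is the lcm, since $|\sigma_j\vee\beta|=1+|\sigma_j\backslash\beta|$)'', but that identity is just the definition of $\backslash$ and gives no a priori value for $|\sigma_j\vee\beta|$ --- indeed, across the cases of the lemma that length varies from $|\beta|$ (second formula, $j=i$) up to $2|\beta|+1$ (both formulas, $j=i+1$), so there is no uniform ``right length'' to check against. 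Likewise your ``key combinatorial fact'' (slide $\sigma_j$ to the unique non-commuting letter, apply one braid relation, cancel) is precisely the statement that needs proof, and it is asserted rather than proved; worse, it explicitly excludes the situations where $\sigma_j$ occurs in $\beta$ or where $\beta$ contains more than one letter not commuting with $\sigma_j$ --- which are exactly the cases $j=i$ in the first formula and $m\le j\le i$ in the second, the ones you visibly struggle with. In one of these you commit to a false statement: $\sigma_i$ is \emph{not} a prefix of $\sigma_{i-1}\sigma_i$ (strands $i$ and $i+1$ do not cross in that braid; in $B_3^+$ the only prefixes of $\sigma_1\sigma_2$ are $1$, $\sigma_1$, $\sigma_1\sigma_2$), so the tentative value $\sigma_i\backslash(\sigma_{i-1}\sigma_i)=\sigma_{i-1}$ is wrong; the correct value is $\sigma_{i-1}\sigma_i$, consistent with the ``otherwise'' branch, because $\sigma_i\vee(\sigma_{i-1}\sigma_i)=\sigma_i\sigma_{i-1}\sigma_i$ has length $3$.

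The missing ingredient is the one the paper's proof uses: every braid occurring in the statement is a permutation braid, a permutation braid is uniquely determined by its induced permutation, one has $\sigma_l\preccurlyeq x$ for a permutation braid $x$ if and only if strands $l$ and $l+1$ cross in $x$, and the lcm of two permutation braids is the permutation braid whose set of crossing pairs is the closure of the union of the two crossing sets \cite{Epstein}. With that calculus, both divisibility and minimality are read off from permutations, and each case of the lemma becomes a finite check. If you want a purely word-theoretic argument, you must replace the minimality step by an equally effective tool (for instance word reversing, whose completeness for the braid presentation computes $\sigma_j\backslash\beta$ directly); without one of these, your argument establishes only one of the two inequalities characterising the lcm.
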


\begin{proof}
For $x\in M$ let $\pi_x\in\Sym_n$ denote the permutation that $x$ induces on the $n$ strands.
A so-called \textsl{permutation braid} is a braid in which  any two strands cross at most once.
If $x$ is a permutation braid, then $x$ is uniquely determined by $\pi_x$.  Moreover,
one has $\sigma_i\preccurlyeq x$ if and only if $\pi_x(i)>\pi_x(i+1)$, that is, if and only if strands $i$ and $i+1$ cross in $x$.
In particular, the least common multiple of two permutation braids $x$ and $y$ can be computed easily from $\pi_x$ and $\pi_y$ \cite{Epstein}.

All the braids occurring in the statement of the lemma are permutation braids and the claimed equalities are readily checked.
\end{proof}

\begin{example}\label{E:forbidden_prefixes}
Consider $\sigma_4\sigma_3\sigma_2\sigma_2\sigma_1 \in B_5$.
From Proposition~\ref{P:minimal_generator} one has
$F_n^{\min}(\sigma_4) = \{ \sigma_1 , \sigma_2 , \sigma_3\sigma_4 \}$.
Using Proposition~\ref{P:minimal_vw} and Lemma~\ref{L:calculations} repeatedly, one then obtains:
\begin{align*}
& F_n^{\min}(\sigma_4\sigma_3)
 = \min_\preccurlyeq \big\{ \sigma_1 , \sigma_2\sigma_3 ,
     \sigma_3\backslash\sigma_1 , \sigma_3\backslash\sigma_2 , \sigma_3\backslash \sigma_3\sigma_4 \big\}
 = \{ \sigma_1 , \sigma_2\sigma_3 , \sigma_4 \} \\
& F_n^{\min}(\sigma_4\sigma_3\sigma_2)
 = \min_\preccurlyeq \big\{ \sigma_1\sigma_2 ,
     \sigma_2\backslash\sigma_1 , \sigma_2\backslash\sigma_2\sigma_3 , \sigma_2\backslash\sigma_4 \big\}
 = \{ \sigma_1\sigma_2 , \sigma_3 , \sigma_4 \} \\
& F_n^{\min}(\sigma_4\sigma_3\sigma_2\sigma_2)
 = \min_\preccurlyeq \big\{ \sigma_1\sigma_2 ,
     \sigma_2\backslash\sigma_1\sigma_2 , \sigma_2\backslash\sigma_3 , \sigma_2\backslash\sigma_4 \big\}
 = \{ \sigma_1\sigma_2 , \sigma_3\sigma_2 , \sigma_4\} \\
& F_n^{\min}(\sigma_4\sigma_3\sigma_2\sigma_2\sigma_1)
 = \min_\preccurlyeq \big\{ \sigma_1\backslash\sigma_1\sigma_2 , \sigma_1\backslash\sigma_3\sigma_2 ,
     \sigma_1\backslash\sigma_4 \big\}
 = \{ \sigma_2 , \sigma_3\sigma_2\sigma_1 , \sigma_4 \}
\end{align*}
\end{example}

\begin{definition}\label{D:forbidden_prefixes_encoding}
Let $[n]=\{1,\dots,n-1\}$.
\vspace*{-1ex}
\begin{enumerate}[\textup{(}a\textup{)}]
\item
A function $f:[n]\to\{-1,0\}\cup[n]$ is called {\bf admissible}, if it satisfies
$f(i)\le i$ for all $i\in[n]$, and $f(1)\neq -1$.

\item
For an admissible function $f$, we define
\[
   F_f =
    \big\{ \sigma_i\sigma_{i-1}\cdots\sigma_{f(i)} \;\big|\; i\in f^{-1}\big([n]\big) \big\}
    \cup \big\{ \sigma_{i-1}\sigma_i \;\big|\; i\in f^{-1}(-1) \big\}
    \subset M\,.
\]
\end{enumerate}
\end{definition}

We now show that for any $w\in L_n$, there is an admissible function $f_w$
such that $F_n^{\min}(w) = F_{f_w}$, and we express $f_{(w\sigma_j)}$, for $w\sigma_j\in L_n$, in terms of~$f_w$.

\begin{proposition}\label{P:minimal_generator_encoding}
\quad\vspace*{-1ex}
\begin{enumerate}[\textup{(}a\textup{)}]
\item\label{P:minimal_generator_encoding:a}
$F_n^{\min}(\epsilon) = F_f$, where $f:[n]\to\{-1,0\}\cup[n]$ given by $f(i)=0$ for all $i$ is admissible.
\item\label{P:minimal_generator_encoding:b}
$F_n^{\min}(\sigma_j) = F_{g_j}$, where $g_j:[n]\to\{-1,0\}\cup[n]$ given by
\[
 g_j(i) =
  \begin{cases}
    i  & \text{if $1\le i\le j-2$} \\
    -1 & \text{if $i=j>1$} \\
    0  & \text{otherwise}
  \end{cases}\vspace*{-1.5ex}
\]
is admissible.
\end{enumerate}
\end{proposition}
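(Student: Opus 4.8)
The plan is to prove both statements by simply unwinding Definition~\ref{D:forbidden_prefixes_encoding} and matching the resulting sets $F_f$ and $F_{g_j}$ against the explicit descriptions of $F_n^{\min}(\epsilon)$ and $F_n^{\min}(\sigma_j)$ that Proposition~\ref{P:minimal_generator} already provides. No genuinely new idea is required; the whole argument is careful bookkeeping of the preimages $f^{-1}([n])$, $f^{-1}(-1)$ and of the descending products $\sigma_i\sigma_{i-1}\cdots\sigma_{f(i)}$, which may degenerate to a single atom. The only subtlety worth isolating is the behaviour at the small indices $j=1$ and $j=2$.

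For part (a), I would first note that the constant function $f\equiv 0$ is admissible: $0\le i$ for every $i\in[n]$ and $f(1)=0\neq -1$. Since $0\notin[n]$ and $f$ never equals $-1$, both index sets $f^{-1}([n])$ and $f^{-1}(-1)$ in Definition~\ref{D:forbidden_prefixes_encoding} are empty, so $F_f=\emptyset$. By Proposition~\ref{P:minimal_generator}(a) we have $F_n^{\min}(\epsilon)=\emptyset$, so $F_n^{\min}(\epsilon)=F_f$, as claimed.

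For part (b), I would compute the two relevant index sets for $g_j$. The only clause of $g_j$ that can produce a value in $[n]=\{1,\dots,n-1\}$ is the first one, so $g_j^{-1}([n])=\{1,\dots,j-2\}$, which is empty when $j\le 2$; note that each such $i$ does lie in $[n]$ since $i\le j-2\le n-3$. Likewise $g_j^{-1}(-1)=\{j\}$ when $j>1$ and $g_j^{-1}(-1)=\emptyset$ when $j=1$. Feeding this into Definition~\ref{D:forbidden_prefixes_encoding}, the second part of $F_{g_j}$ contributes exactly $\sigma_{j-1}\sigma_j$ when $j>1$ (and nothing when $j=1$), while for each $i\in\{1,\dots,j-2\}$ the value $g_j(i)=i$ makes the descending product $\sigma_i\sigma_{i-1}\cdots\sigma_{g_j(i)}$ collapse to the single atom $\sigma_i$. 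Hence $F_{g_j}=\{\sigma_1,\dots,\sigma_{j-2},\sigma_{j-1}\sigma_j\}$, which is precisely $F_n^{\min}(\sigma_j)$ by Proposition~\ref{P:minimal_generator}(b); in particular this automatically reproduces the degenerate cases $F_n^{\min}(\sigma_1)=\emptyset$ and $F_n^{\min}(\sigma_2)=\{\sigma_1\sigma_2\}$. Finally, $g_j$ is admissible: $g_j(i)\in\{i,-1,0\}$ for all $i$, so always $g_j(i)\le i$; and $g_j(1)\neq -1$ because the hypothesis $1\le 1\le j-2$ of the first clause forces $j\ge 3$ (giving $g_j(1)=1$), whereas for $j\in\{1,2\}$ neither the first nor the second clause applies to $i=1$, so $g_j(1)=0$.

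There is essentially no hard step here; the one place to be attentive is the interaction of the boundary cases $j\le 2$ with the empty preimages and the degenerate descending products, which is why I would treat $g_j^{-1}([n])$ and $g_j^{-1}(-1)$ explicitly rather than invoke a generic formula. I also emphasise that we do \emph{not} need to verify separately that the elements of $F_{g_j}$ are pairwise $\preccurlyeq$-incomparable or $\preccurlyeq$-minimal: the equality $F_n^{\min}(\sigma_j)=F_{g_j}$ is obtained by comparing with the already-known set $F_n^{\min}(\sigma_j)$ from Proposition~\ref{P:minimal_generator}, which carries that information.
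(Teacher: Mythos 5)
Your proposal is correct and takes essentially the same route as the paper, which simply declares part (a) trivial and part (b) a restatement of Proposition~\ref{P:minimal_generator}; you have merely spelled out the bookkeeping of $f^{-1}([n])$, $f^{-1}(-1)$ and the boundary cases $j\le 2$ that the paper leaves implicit. The admissibility checks and the identification $F_{g_j}=\{\sigma_1,\dots,\sigma_{j-2},\sigma_{j-1}\sigma_j\}$ are all accurate.
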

\begin{proof}
\claim{\ref{P:minimal_generator_encoding:a}} is trivial.  \claim{\ref{P:minimal_generator_encoding:b}} is just a restatement of Proposition~\ref{P:minimal_generator}.
\end{proof}

\begin{proposition}\label{P:minimal_vw_encoding}
Let $w\in L_n$ and let $f$ be admissible with $F_n^{\min}(w) = F_f$.
Then $w\sigma_j\in L_n$ if and only if $f(j)\ne j$.  Moreover, in this case, $F_n^{\min}(w\sigma_j) = F_g$, for the admissible function $g$ given as follows:
\begin{align*}
\text{for $i< j-1$:}\qquad   g(i) & = i \\
 g(j-1) & =\begin{cases}
  f(j) & \text{if $f(j)>0$} \\[0.5ex]
  0 & \text{otherwise}
\end{cases}  \\[1ex]
 g(j) & =\begin{cases}
 0 & \text{if $j=1$ or $f(j)=j-1$} \\[0.5ex]
 -1 & \text{otherwise}
\end{cases} \\[1ex]
\text{for $i>j$:}\qquad  g(i) & =\begin{cases}
i & \text{if $f(i)=-1$} \\[0.5ex]
 j & \text{if $f(i)=j+1$} \\[0.5ex]
 f(i) & \text{otherwise}
\end{cases} \\
\end{align*}
\end{proposition}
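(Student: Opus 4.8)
The plan is to derive the statement directly from Proposition~\ref{P:minimal_vw} together with Lemma~\ref{L:calculations}, by tracking how each generator of $F_n^{\min}(w)=F_f$ transforms under the operation $\beta\mapsto\sigma_j\backslash\beta$ and then computing which of the resulting braids survive the passage to $\preccurlyeq$-minimal elements. First I would address the claim that $w\sigma_j\in L_n$ if and only if $f(j)\ne j$: indeed $w\sigma_j\in L_n$ exactly when $\sigma_j\notin F_n(w)$, i.e.\ when no element of $F_n^{\min}(w)=F_f$ is a prefix of $\sigma_j$; since $\sigma_j$ is an atom, this happens iff $\sigma_j\notin F_f$, and by Definition~\ref{D:forbidden_prefixes_encoding}(b) the only way $\sigma_j\in F_f$ is if $i=j\in f^{-1}([n])$ with $f(j)=j$ (the strings $\sigma_i\sigma_{i-1}\cdots\sigma_{f(i)}$ of length $1$ are precisely those with $f(i)=i$, and the $\sigma_{i-1}\sigma_i$ never equal $\sigma_j$). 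So $w\sigma_j\in L_n\iff f(j)\ne j$, as claimed.

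Next, assuming $f(j)\ne j$, I would invoke Proposition~\ref{P:minimal_vw}: $F_n^{\min}(w\sigma_j)$ is the set of $\preccurlyeq$-minimal elements of
\[
   \{\sigma_1,\ldots,\sigma_{j-2},\sigma_{j-1}\sigma_j\}
      \cup \{\sigma_j\backslash \beta \mid \beta\in F_f\}.
\]
I would then split $F_f$ into its two families and apply Lemma~\ref{L:calculations} to each member. For a generator $\sigma_i\sigma_{i-1}\cdots\sigma_{f(i)}$ (so $f(i)\in[n]$, $f(i)<i$ since $f(i)\ne i$), the second display of Lemma~\ref{L:calculations} with $m=f(i)$ gives four cases according to whether $j=f(i)-1$, $j=i$, $j=i+1$, or none of these; for a generator $\sigma_{i-1}\sigma_i$ (so $f(i)=-1$, forcing $i\ge2$), the first display of Lemma~\ref{L:calculations} applies with the roles $i-1,i$ in place of $i-1,i$, giving cases $j=i-2$, $j=i-1$, $j=i+1$, otherwise. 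I would organize the bookkeeping by the position $i$ relative to $j$: for $i>j$ the relevant cases of the lemma are $j=i$ (impossible here), $j=i+1$ (impossible here except boundary), and the generic case, except that when $f(i)=j+1$ the ``$j=m-1$'' case of the lemma shortens the string to $\sigma_i\cdots\sigma_{j}$, i.e.\ $g(i)=j$, and when $f(i)=-1$ the atom $\sigma_{i-1}\sigma_i$ with $j<i-2$ or $j=i-1$ etc.\ must be handled — this is exactly the ``$i>j$'' clause of $g$. For $i=j$ the generator is $\sigma_j\sigma_{j-1}\cdots\sigma_{f(j)}$ with $f(j)<j$ (or $\sigma_{j-1}\sigma_j$ if $f(j)=-1$); applying the ``$j=i$'' case gives $\sigma_{j-1}\cdots\sigma_{f(j)}$, which together with the newly introduced $\sigma_{j-1}\sigma_j$ determines $g(j-1)$ and $g(j)$. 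For $i<j$ one checks $\sigma_j\backslash\beta=\beta$ generically, but these get absorbed by the fresh atoms $\sigma_1,\ldots,\sigma_{j-2}$, yielding $g(i)=i$ for $i<j-1$.

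Finally I would carry out the reduction to $\preccurlyeq$-minimal elements: the atoms $\sigma_1,\dots,\sigma_{j-2}$ dominate (are prefixes of) any longer string starting with one of $\sigma_1,\dots,\sigma_{j-2}$, so all the $i<j$ contributions collapse to exactly $\{\sigma_1,\dots,\sigma_{j-2}\}$, explaining $g(i)=i$ for $i<j-1$ and the absence of any other short string; and I must check that no $i>j$ string is killed by, nor kills, the others — here the key observation is that distinct values of $i$ produce strings with distinct largest index (the first letter), so the only possible $\preccurlyeq$-relations are within a single $i$ or with the new atoms, and the case analysis above already selected the minimal one. Assembling the four clauses gives precisely the function $g$ in the statement, and admissibility of $g$ ($g(i)\le i$ for all $i$, $g(1)\ne-1$) is immediate from the formulas: $g(i)=i$, $g(j-1)\le j-1$ since $f(j)\le j$ forces $f(j)-1$-free bound, $g(j)\in\{0,-1\}\le j$ with $g(1)=0$ when $j=1$, and for $i>j$ either $g(i)=i$, $g(i)=j<i$, or $g(i)=f(i)\le i$. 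The main obstacle I anticipate is the sheer volume of boundary cases — $j=1$, $f(j)=-1$, $f(i)=j+1$ meeting $f(i)=-1$, strings of length two versus length one colliding — and making sure the $\preccurlyeq$-minimality reduction is argued uniformly rather than case by case; I would handle this by the ``distinct first letter'' lemma sketched above, which reduces all cross-comparisons to comparisons with the atoms $\sigma_1,\dots,\sigma_{j-2}$.
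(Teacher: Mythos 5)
Your overall strategy is the same as the paper's: reduce to Proposition~\ref{P:minimal_vw}, push each element of $F_f$ through $\beta\mapsto\sigma_j\backslash\beta$ via Lemma~\ref{L:calculations}, and then discard non-minimal elements. Most of your case analysis is sound, but there is one genuine gap, precisely at the point you wave at with ``the atom $\sigma_{i-1}\sigma_i$ with $j<i-2$ or $j=i-1$ etc.\ must be handled --- this is exactly the `$i>j$' clause of $g$.'' It is not: if $i>j$ and $f(i)=-1$, the element of $F_f$ is $\sigma_{i-1}\sigma_i$, and Lemma~\ref{L:calculations} gives $\sigma_j\backslash(\sigma_{i-1}\sigma_i)=\sigma_{i-1}\sigma_i$ when $j<i-2$ (which would force $g(i)=-1$) and $\sigma_j\backslash(\sigma_{i-1}\sigma_i)=(\sigma_{i-1}\sigma_i)(\sigma_{i-2}\sigma_{i-1})$ when $j=i-2$ (which is not a multiple of any $\sigma_1,\dots,\sigma_{j-2}$ or of $\sigma_{j-1}\sigma_j$, so it would survive minimisation and is not of the form prescribed by $g$). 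Neither outcome matches the claimed $g(i)=i$. The formula is only correct because these configurations cannot occur: one must prove, by induction on $|w|$, that $f(i)=-1$ forces $\sigma_i$ to be the last letter of $w$, whence $\sigma_1,\dots,\sigma_{i-2}\in F_n(w)$ and so $w\sigma_j\in L_n$ implies $j\ge i-1$; combined with $i>j$ this pins down $i=j+1$, and then $\sigma_j\backslash(\sigma_j\sigma_{j+1})=\sigma_{j+1}$ gives $g(i)=i$. Without this auxiliary inductive fact your case analysis appears to \emph{contradict} the stated formula rather than establish it, so it must be supplied explicitly.

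Two smaller points. First, your parenthetical ``$f(i)<i$ since $f(i)\ne i$'' is wrong for $i\ne j$: only $f(j)\ne j$ is known, and the atoms $\sigma_i$ (i.e.\ $f(i)=i$) do occur and must be run through the lemma as well (note $\sigma_j\backslash\sigma_{j+1}=\sigma_{j+1}\sigma_j$, which is why the clause $f(i)=j+1\Rightarrow g(i)=j$ also absorbs the atom case $i=j+1=f(i)$). Second, your ``distinct first letter'' minimality argument only compares against the atoms $\sigma_1,\dots,\sigma_{j-2}$; you also need to discard multiples of the length-two element $\sigma_{j-1}\sigma_j$, e.g.\ $\sigma_j\backslash(\sigma_{j-1}\cdots\sigma_m)=(\sigma_{j-1}\sigma_j)(\sigma_{j-2}\sigma_{j-1})\cdots$, which shares its first letter with $\sigma_{j-1}\sigma_j$. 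These are repairable, but the $f(i)=-1$ issue above is the substantive missing idea.
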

\begin{proof}
One has $w\sigma_j\in L_n$ if and only if $\sigma_j\notin F_n(w)$; the latter is the case if and only if $f(j)\ne j$, as $\sigma_j$ is an atom.  Now assume $w\sigma_j\in L_n$, and thus $f(j)\ne j$. Then it is clear that $g$ is admissible, as $f(j)\leq j-1$.

Recall from Proposition~\ref{P:minimal_vw} that $F_n^{\min}(w\sigma_j)$  is the set of minimal elements in $\{\sigma_1,\ldots,\sigma_{j-2},\sigma_{j-1}\sigma_j\}
\cup \{\sigma_j\backslash \beta \mid \beta\in F_n^{\min}(w)\}.$ This implies that $\sigma_1,\ldots,\sigma_{j-2}$ belong to $F_n^{\min}(w\sigma_j)$, as they are atoms, so $g(i)=i$ for $i<j-1$. Also, $F_n^{\min}(w\sigma_j)$ contains $\sigma_{j-1}\sigma_j$, unless $j=1$ or $\sigma_{j-1}\in F_n^{\min}(w\sigma_j)$.
We have $\sigma_{j-1}\in F_n^{\min}(w\sigma_j)$ if and only if $\sigma_{j}\backslash \beta =\sigma_{j-1}$ for some $\beta\in F_n^{\min}(w)$.  By Lemma~\ref{L:calculations}, the latter is equivalent to $\beta=\sigma_{j}\sigma_{j-1}$, and hence to $f(j)=j-1$.

It follows by induction on $|w|$ that one can have $f(i)=-1$ only if $\sigma_i$ is the last letter of $w$. In that case, $\sigma_1,\ldots,\sigma_{i-2}$ are forbidden prefixes after $w$, hence $j\geq i-1$. Therefore, if $i>j$ we can have $f(i)=-1$ only if $i=j+1$. As $\sigma_j\backslash \sigma_j\sigma_{j+1}=\sigma_{j+1}$, this is why in this case we have $g(i)=i$.

The other values of $g$ follow directly from Proposition~\ref{P:minimal_vw}, using the identities from Lemma~\ref{L:calculations} and discarding any elements which are not minimal (specifically, any multiples of $\sigma_1,\dots,\sigma_{j-2}$ and $\sigma_{j-1}\sigma_j$).
\end{proof}

\begin{corollary}\label{C:existence_encoding}
If $w\in L_n$, then there exists a unique admissible function $f_w$, such that $F_n^{\min}(w) = F_{f_w}$.
In particular, $|F_n^{\min}(w)| \le n-1$.
\end{corollary}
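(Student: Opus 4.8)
The plan is to prove Corollary~\ref{C:existence_encoding} by induction on $|w|$, using the two preceding propositions as the inductive machinery. The base case $w=\epsilon$ is handled by \claim{\ref{P:minimal_generator_encoding:a}}: the constant function $f\equiv 0$ is admissible and $F_n^{\min}(\epsilon)=\emptyset=F_f$. For the inductive step, given $w\in L_n$ with $|w|\ge 1$, write $w=v\sigma_j$ for some $v\in L_n$ (note $v\in L_n$ since any prefix of a lex-representative is a lex-representative). By the induction hypothesis there is an admissible $f_v$ with $F_n^{\min}(v)=F_{f_v}$. Since $w=v\sigma_j\in L_n$, Proposition~\ref{P:minimal_vw_encoding} tells us that $f_v(j)\ne j$, that the function $g$ defined there is admissible, and that $F_n^{\min}(w)=F_g$; so we may take $f_w=g$.

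The existence part is thus essentially immediate once the propositions are in hand. The only real work is \emph{uniqueness}: I need to check that the map $f\mapsto F_f$ is injective on admissible functions, so that $f_w$ is uniquely determined by $F_n^{\min}(w)$. For this I would argue directly from Definition~\ref{D:forbidden_prefixes_encoding}(b): given $F_f$ as a subset of $M$, one recovers $f$ by inspecting, for each $i\in[n]$, which kind of element (if any) of $F_f$ "involves" the index $i$ as its largest index. Concretely, the descending-run words $\sigma_i\sigma_{i-1}\cdots\sigma_{f(i)}$ and the two-letter words $\sigma_{i-1}\sigma_i$ have distinct shapes once one records their sets of letters and the start/end letters; no descending run of the form $\sigma_i\cdots\sigma_m$ with $m\le i$ equals a word $\sigma_{i'-1}\sigma_{i'}$ unless the run has length two and is itself of ascending type, which forces $f(i)=i-1<i$ and can be distinguished — so one reads off $f(i)=-1$ exactly when $\sigma_{i-1}\sigma_i\in F_f$, $f(i)=0$ when no element of $F_f$ has largest letter $\sigma_i$ (and $i$ is not of the previous type), and otherwise $f(i)$ is the smallest index appearing in the unique descending-run element of $F_f$ starting with $\sigma_i$. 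One must double-check the boundary overlaps (e.g.\ $\sigma_{i-1}\sigma_i$ versus $\sigma_i\sigma_{i-1}$, and the degenerate single-letter runs $\sigma_i$ when $f(i)=i$, which by admissibility combined with the propositions never actually occur for an $f_w$), but this is a finite, mechanical verification.

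I expect the uniqueness bookkeeping to be the main obstacle — not because it is deep, but because one has to be careful that the two families of words in $F_f$ (descending runs versus the ascending pairs $\sigma_{i-1}\sigma_i$) are genuinely distinguishable as elements of the monoid $M$ and not merely as words; since $M=B_n^+$ is not free, a priori two syntactically different words could represent the same braid. Here the saving grace is that all elements of $F_f$ are permutation braids (indeed all the words appearing are simple descending or length-two words), and a permutation braid is determined by its underlying permutation, which in turn determines its set of descents and hence enough combinatorial data to tell the two families apart. Once injectivity of $f\mapsto F_f$ is established, the final bound $|F_n^{\min}(w)|\le n-1$ is immediate: $F_{f_w}$ has at most one element for each $i\in[n]=\{1,\dots,n-1\}$, namely the one indexed by $i$ in whichever of the two unions of Definition~\ref{D:forbidden_prefixes_encoding}(b) it falls, so $|F_n^{\min}(w)|=|F_{f_w}|\le |[n]| = n-1$.
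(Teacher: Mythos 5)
Your proposal is correct and follows essentially the same route as the paper: existence by induction on $|w|$ via Propositions~\ref{P:minimal_generator_encoding} and \ref{P:minimal_vw_encoding}, and uniqueness because $f$ is recoverable from $F_f$. The paper simply asserts that $f_w$ is determined by $F_n^{\min}(w)$, whereas you spell out the injectivity of $f\mapsto F_f$ (distinguishing the descending runs from the ascending pairs as permutation braids); that extra care is sound but not a different argument.
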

\begin{proof}
If it exists, $f_w$ is uniquely determined by $F_n^{\min}(w)$ and thus by $w$.
The existence follows by induction on $|w|$, using Propositions~\ref{P:minimal_generator_encoding} and \ref{P:minimal_vw_encoding}.
\end{proof}

\begin{example}\label{E:forbidden_prefixes_encoding}
Consider again $\sigma_4\sigma_3\sigma_2\sigma_2\sigma_1 \in B_5$.
To shorten notation, we identify an admissible function $f$ with the sequence $[f(1),\dots,f(n-1)]$.
From Proposition~\ref{P:minimal_generator_encoding} one has
$f_{\sigma_4} = [ 1, 2, 0, -1 ]$.
Repeated application of Proposition~\ref{P:minimal_vw_encoding} then yields
$f_{\sigma_4\sigma_3} = [ 1, 0, -1, 4 ]$,
$f_{\sigma_4\sigma_3\sigma_2} = [ 0, -1, 3, 4 ]$,
$f_{\sigma_4\sigma_3\sigma_2\sigma_2} = [ 0, -1, 2, 4 ]$, and
$f_{\sigma_4\sigma_3\sigma_2\sigma_2\sigma_1} = [ 0, 2, 1, 4 ]$.
Observe that the sets of minimal forbidden prefixes described by these functions are exactly those computed
in Example~\ref{E:forbidden_prefixes}.
\end{example}

Given $w\in L_n$, the set $F_n^{\min}(w)$ can be computed efficiently using Propositions~\ref{P:minimal_generator_encoding} and \ref{P:minimal_vw_encoding}; the time required is obviously linear in $n$ and $|w|$.
\medskip

There are many admissible functions, but the ones corresponding to minimal sets of forbidden prefixes are very special: It can be shown that they are exactly those satisfying the conditions in the following corollary.
However, proving that these conditions are sufficient is quite technical; as we do not use this fact in the sequel, we only show that they are necessary.

\begin{corollary}\label{C:properties_encoding}
Let $w=w'\sigma_j\in L_n$. Let $i_1<i_2<\cdots<i_r$ be all the indices greater than $j$ such that $0<f_w(i_t)<i_t$ ($t=1,\dots,r$). Denote $m=f_w(j-1)$.%
\vspace*{-1ex}
\begin{enumerate}[\textup{(}a\textup{)}]

\item\label{C:properties_encoding:a}
$f_w(i)=i$ for $i<j-1$.

\item\label{C:properties_encoding:b}
$f_w(i)\in\{0,\ldots, i\}$ for all $i\neq j$.

\item\label{C:properties_encoding:c}
$f_w(j)=\begin{cases}
          0 & \text{if either $j=1$ or $f_w(j-1)=j-1$} \\
          -1 & \text{otherwise}
        \end{cases}$

\item\label{C:properties_encoding:d}
If $f_w(i)=0$ for some $i>j$, then $f_w(\ell)=0$ for $\ell=i,\ldots,n-1$.

\item\label{C:properties_encoding:e}
If $r\ge 1$, then $0<f_w(i_r)\le \cdots \le f_w(i_1)\le j$.

\item\label{C:properties_encoding:f}
If $r\ge 1$ and $m>0$, then either $f_w(i_1)=j$ or $f_w(i_1)\leq m$.

\item\label{C:properties_encoding:g}
If $r\ge 2$ and $m>0$, then $0<f_w(i_r)\le \cdots \le f_w(i_2)\le m$.

\end{enumerate}
\end{corollary}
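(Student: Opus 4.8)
The plan is to prove all seven items by a single induction on $|w|$, exploiting the recursive description of $f_w$ given in Proposition~\ref{P:minimal_vw_encoding}. The base cases $w=\epsilon$ (where $F_n^{\min}(w)=\emptyset$, so there is nothing to check) and $w=\sigma_j$ (where $f_w=g_j$ from Proposition~\ref{P:minimal_generator_encoding}, and every item is immediate by inspection) are routine. For the inductive step, write $w=v\sigma_j$ with $v\in L_n$, assume the seven properties hold for $v=v'\sigma_{j'}$, set $f=f_v$ and $g=f_w$, and verify each property for $g$ using the explicit case formulas relating $g$ to $f$. Items~\noclaim{\ref{C:properties_encoding:a}}, \noclaim{\ref{C:properties_encoding:b}} and \noclaim{\ref{C:properties_encoding:c}} are read off directly from the formula for $g$ in Proposition~\ref{P:minimal_vw_encoding}: clause $g(i)=i$ for $i<j-1$ gives~\noclaim{\ref{C:properties_encoding:a}}; the fact that $g(i)\in\{-1,0,\dots,i\}$ always and $g(i)=-1$ only when $i=j$ gives~\noclaim{\ref{C:properties_encoding:b}}; and the stated formula for $g(j)$ is exactly~\noclaim{\ref{C:properties_encoding:c}}.

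The heart of the argument is items~\noclaim{\ref{C:properties_encoding:d}}--\noclaim{\ref{C:properties_encoding:g}}, which concern the behaviour of $f_w$ on indices $i>j$. Here I would track how the sorted list of ``active'' indices $i_1<\dots<i_r$ (those with $0<f_w(i)<i$) and their values evolve. For $i>j$ the update rule is: $g(i)=i$ if $f(i)=-1$, $g(i)=j$ if $f(i)=j+1$, and $g(i)=f(i)$ otherwise. So an index $i>j$ becomes (or stays) active in $w$ precisely when it was active in $v$ with $0<f(i)\le i$ (possibly $f(i)=j+1$, which gets replaced by $j$), or when $f(i)=-1$ was forced and now disappears from the active set, etc. For~\noclaim{\ref{C:properties_encoding:d}}: if $g(i)=0$ for some $i>j$, then $f(i)=0$ (the rule never produces $0$ from a nonzero input when $i>j$, except... actually $f(i)=0$ is the only preimage giving $g(i)=0$ here); but note $j\ge j'-1$ or $j=j'$ in the chain, and one invokes the inductive hypothesis~\noclaim{\ref{C:properties_encoding:d}} for $v$ together with a case analysis on whether $i>j'$ or $i\le j'$, using~\noclaim{\ref{C:properties_encoding:a}}/\noclaim{\ref{C:properties_encoding:c}} for $v$ to pin down $f$ on small indices, to conclude $f(\ell)=0$ hence $g(\ell)=0$ for all $\ell\ge i$. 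For the monotonicity statements~\noclaim{\ref{C:properties_encoding:e}}, \noclaim{\ref{C:properties_encoding:g}}: the key point is that the update $f(i)\mapsto\min\{f(i),j\}$-ish (more precisely: values $\le j$ are unchanged, the value $j+1$ is lowered to $j$, and the value $-1$ is promoted to $i$ and leaves the active set) is order-preserving on the active values, and $g(j-1)=f(j)$ or $0$ is at most $j-1<j$; combining with the inductive chain $0<f(i_r')\le\dots\le f(i_1')\le j'$ for $v$ and the relation between $j$ and $j'$ (note $j\le j'+1$ and if $j<j'-1$ then~\noclaim{\ref{C:properties_encoding:a}} for $v$ forbids active indices, etc.) yields the claimed inequalities, with~\noclaim{\ref{C:properties_encoding:f}} handling the first active index separately because $g(j-1)=m$ can be ``filled in'' as a new small value exactly when $f(j)=m>0$, forcing $f(j)=j+1$ cannot happen so $j$ was not the old active-bottleneck, giving the dichotomy $f_w(i_1)=j$ or $f_w(i_1)\le m$.

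I expect the main obstacle to be bookkeeping rather than any deep idea: namely, carefully relating the index $j$ (last letter of $w$) to the index $j'$ (last letter of $v$), since several cases split on $j=j'$, $j=j'+1$, $j=j'-1$, or $|j-j'|\ge 2$, and one must check that the inductive hypotheses~\noclaim{\ref{C:properties_encoding:a}}--\noclaim{\ref{C:properties_encoding:g}} for $v$ are strong enough in each regime. In particular, establishing~\noclaim{\ref{C:properties_encoding:f}} and~\noclaim{\ref{C:properties_encoding:g}} — which together say that after the ``new'' small value $m=f_w(j-1)$ is introduced, all subsequent active values are capped by $m$ while at most one (namely $f_w(i_1)$) may equal $j$ — requires noticing that an index $i>j$ with old value $f(i)=j+1$ is lowered to exactly $j$, so at most one such index exists (they would have been comparable in the sorted order), and any other active index had old value $\le j$, in fact $\le m$ by hypothesis~\noclaim{\ref{C:properties_encoding:g}} applied one level down after identifying $m$ with the role of $f_v(j'-1)$ or a value propagated from it. Once the correspondence between the two levels of the induction is set up cleanly, each of the seven verifications is a short case check against the formulas of Proposition~\ref{P:minimal_vw_encoding}.
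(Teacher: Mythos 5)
Your plan follows essentially the same route as the paper's proof: induction on $|w|$ via the recursion of Proposition~\ref{P:minimal_vw_encoding}, with \claims{\ref{C:properties_encoding:a}}--\noclaim{\ref{C:properties_encoding:c}} read off the formulas, \claim{\ref{C:properties_encoding:d}} settled by the same contradiction through \claim{\ref{C:properties_encoding:c}} applied to $f_v$, and \claims{\ref{C:properties_encoding:e}}--\noclaim{\ref{C:properties_encoding:g}} obtained from order-preservation of the update together with the special case $f_v(i)=j+1\mapsto j$ and the identification of $m=f_w(j-1)$ with $f_v(j)$. One small slip: the constraint relating the last letters is $j\ge j'-1$ (equivalently $j'\le j+1$), not ``$j\le j'+1$'' as written, but your own justification via \claim{\ref{C:properties_encoding:a}} for $v$ yields the correct inequality, so this does not affect the argument.
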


\begin{proof}
If $w'=\epsilon$, that is $w=\sigma_j$, the results holds trivially by Proposition~\ref{P:minimal_generator_encoding}, so let $w'=w''\sigma_k$ and assume the result for $f_{w'}$.
To shorten notation, let $f=f_{w'}$ and $g=f_w$.
Since $w'\sigma_j\in L_n$, we have $f(j)\ne j$ and thus $j\ge k-1$.
Proposition~\ref{P:minimal_vw_encoding} immediately yields \claims{\ref{C:properties_encoding:a}}, \noclaim{\ref{C:properties_encoding:b}} and \noclaim{\ref{C:properties_encoding:c}}.

For \claim{\ref{C:properties_encoding:d}} assume that $g(i)=0$ for some $i>j$. By Proposition~\ref{P:minimal_vw_encoding} this happens only if $g(i)=f(i)=0$, and if $i>k$ this implies that $g(r)=f(r)=0$ for $r=i,\ldots,n-1$. As $i>j\ge k-1$, the only remaining case is $i=k=j+1$ and $f(k)=0$. In this case \claim{\ref{C:properties_encoding:c}} applied to $f$ gives $f(k-1)=k-1$, that is $f(j)=j$, which is a contradiction. Thus \claim{\ref{C:properties_encoding:d}} holds.

Now suppose $r\ge1$. We have $i_1>j\ge k-1$, and $i_1=k$ would imply $f(i_1)\in \{0,-1\}$ and then, with Proposition~\ref{P:minimal_vw_encoding}, $g(i_1)\in \{0,i_1\}$; the latter is a contradiction, so $k<i_1<\cdots <i_r$.
For $t=1,\ldots,r$, Proposition~\ref{P:minimal_vw_encoding} also yields that
either $g(i_t)=f(i_t)$, or $f(i_t)=j+1$ and $g(i_t)=j$.
Since $f(i_t)\le i_t$ and $j<i_1<i_2<\dots<i_r$, the latter implies that either $0<f(i_t)<i_t$, or $t=1$ and $i_1=j+1=f(i_1)=g(i_1)+1$.
In any case, applying \claim{\ref{C:properties_encoding:e}} to $f$ and using $k\le j+1$, we obtain $0< f(i_r)\leq \cdots \leq f(i_1)\leq j+1$.
Again using Proposition~\ref{P:minimal_vw_encoding} then yields $0< g(i_r)\leq \cdots \leq g(i_1)\leq j$, showing \claim{\ref{C:properties_encoding:e}}.

Suppose $m=g(j-1)>0$.  If $i_1=j+1=f(i_1)=g(i_1)+1$, then \claim{\ref{C:properties_encoding:f}} holds.
By the preceding paragraph, we may thus assume that $0<f(i_t)<i_t$ for $1\le t\le r$.
By Proposition~\ref{P:minimal_vw_encoding}, $m>0$ implies $f(j)>0$ and $m=f(j)$.
In particular, $j\ne k$ (as $f(k)\in \{0,-1\}$), whence we have either $j=k-1$ or $j>k$.
In the former case, $m=f(k-1)$, so we can apply the result to $f$ and obtain that $f(i_t)\leq m$ for $2\le t\le r$, and either $f(i_1)=k=j+1$ or $f(i_1)\leq m$;
as $m\le j+1$, Proposition~\ref{P:minimal_vw_encoding} then yields $g(i_t)=f(i_t)\leq m$ for $2\le t\le r$, and either $g(i_1)=j$ or $g(i_1)=f(i_1)\leq m$, showing \claims{\ref{C:properties_encoding:f}} and \noclaim{\ref{C:properties_encoding:g}} in this case.
On the other hand, if $j>k$ we just need to notice that $0<m=f(j)<j$ and apply \claim{\ref{C:properties_encoding:e}} to $f$,
which yields $f(i_t)\leq f(j)=m< j$, whence $g(i_t)=f(i_t)\leq m$. So \claims{\ref{C:properties_encoding:f}} and \noclaim{\ref{C:properties_encoding:g}} hold.
\end{proof}

One can show that the conditions of Corollary~\ref{C:properties_encoding} are sufficient for $f$ to be the defining function of some $F_n^{\min}(w)$ by constructing, for every $f$ satisfying the conditions, a word $w_f$ such that $F_n^{\min}(w_f)=F_f$.
As this construction for general $f$ is very technical, we do not describe it here.  Instead, we only consider some special cases; enough to show that the number of possible sets $F_n^{\min}(w)$ is exponential in $n$.

\begin{proposition}\label{P:images_encoding}
For $1\le i,j\le n-1$ define $[i,j]\in \A^*$ as the product $\sigma_i\sigma_{i\pm1}\cdots\sigma_j$.
To shorten notation, we identify an admissible function $f$ with the sequence $\big[f(1),\dots,f(n-1)\big]$.
\begin{enumerate}[\textup{(}a\textup{)}]\vspace*{-1ex}
\item One has $[n-1,1]\in L_n$ and
  \[
    f_{[n-1,j]} =
      \begin{cases}
        \big[1,\dots,j-2,0,-1,j+1,\dots,n-1\big]  & \text{if $n-1\ge j>1$} \\[1ex]
        \big[0,2,\dots,n-1\big]                   & \text{if $j=1$}
      \end{cases}
  \]
\item If $i\in[n]$ and $w\in L_n$ such that $f_w = \big[0,2,3,\dots,i,m_{i+1},\dots,m_{n-1}\big]$,
  with $m_k\in \{1,k\}$ for $k=i+1,\dots,n-1$, then $w\cdot[1,i-1]\in L_n$ and
  \[
    f_{w\cdot[1,j]} =
      \begin{cases}
        \big[1,\dots,j-1,0,j,j+2,\dots,i,m_{i+1},\dots,m_{n-1}\big]  & \!\!\text{if $1\le j<i-1$} \\[1ex]
        \big[1,\dots,i-2,0,i-1,m_{i+1},\dots,m_{n-1}\big]            & \!\!\text{if $j=i-1$}
      \end{cases}
  \]
\item If $i\in[n]$ and $w\in L_n$ such that $f_w = \big[1,\dots,i-2,0,i-1,m_{i+1},\dots,m_{n-1}\big]$,
  with $m_k\in \{1,k\}$ for $k=i+1,\dots,n-1$, then $w\cdot[i-1,1]\in L_n$ and
  \[
    f_{w\cdot[i-1,j]} =
      \begin{cases}
        \big[1,\dots,j-2,0,-1,j+1,\dots,i-1,j,m_{i+1},\dots,m_{n-1}\big] \hspace*{-2cm} \\[1ex]
                     & \text{if $i-1\ge j>1$} \\[1ex]
        \big[0,2,3,\dots,i-1,1,m_{i+1},\dots,m_{n-1}\big]                 & \text{if $j=1$}
      \end{cases}
  \]
\end{enumerate}
\end{proposition}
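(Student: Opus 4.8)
The plan is to prove all three parts by repeated application of Proposition~\ref{P:minimal_vw_encoding}, starting from the base cases provided by Proposition~\ref{P:minimal_generator_encoding}. For part (a), the starting point is $f_{\sigma_{n-1}} = g_{n-1} = [1,\dots,n-3,0,-1]$, obtained from Proposition~\ref{P:minimal_generator_encoding}\noclaim{b}. I would then append $\sigma_{n-2}, \sigma_{n-3}, \dots, \sigma_j$ one at a time, and at each step check that the letter being appended is admissible (using the criterion $f(i)\ne i$ from Proposition~\ref{P:minimal_vw_encoding}, which simultaneously shows $[n-1,j]\in L_n$) and compute the new function. The key is to set up the right \emph{inductive hypothesis}: after appending $\sigma_{n-1}\sigma_{n-2}\cdots\sigma_{j+1}$, the function should be $[1,\dots,j-1,0,-1,j+2,\dots,n-1]$ (with the $-1$ in position $j+1$). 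Appending $\sigma_j$ to this: since the last nonzero-shifted behaviour is governed by position $j$, where $f(j)=0$, Proposition~\ref{P:minimal_vw_encoding} gives $g(j-1)=0$, $g(j)=-1$ (as $j>1$, unless we are at the final step $j=1$ in which case $g(1)=0$), $g(i)=i$ for $i<j-1$, and for $i=j+1$ we have $f(j+1)=-1$ so $g(j+1)=j+1$, while for $i>j+1$, $f(i)=i$ so $g(i)=i$. This reproduces the claimed formula, with the $j=1$ case giving $[0,2,\dots,n-1]$ as a special boundary case where the $-1$ disappears.

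**Parts (b) and (c).** These are proved the same way. For part (b), starting from $f_w = [0,2,3,\dots,i,m_{i+1},\dots,m_{n-1}]$ I append $\sigma_1,\sigma_2,\dots,\sigma_{i-1}$ in order; the inductive claim is that after appending $[1,j]$ (for $j<i-1$) the function is $[1,\dots,j-1,0,j,j+2,\dots,i,m_{i+1},\dots,m_{n-1}]$. Each step appends $\sigma_{j+1}$ to this: the relevant value is $f(j+1)=j+2$ in the ``otherwise'' branch of the $i>j$ clause, but I must be careful at the junction — the position just after the ``$0$'' shifts, so I expect $g(j)=0$ (from the $g(j-1)$ rule with $f(j)=j$... no, wait) — here the subtlety is that $f(j+1)=j+2 = (j+1)+1$ only in the boundary case, otherwise it is the value recorded. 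I would track positions $j-1, j, j+1, j+2$ carefully through Proposition~\ref{P:minimal_vw_encoding}. The $j=i-1$ case is again a boundary case where the pattern terminates, giving $[1,\dots,i-2,0,i-1,m_{i+1},\dots,m_{n-1}]$. Part (c) is entirely analogous to part (a) but with the tail $m_{i+1},\dots,m_{n-1}$ carried along unchanged: starting from $[1,\dots,i-2,0,i-1,m_{i+1},\dots]$ I append $\sigma_{i-1},\sigma_{i-2},\dots,\sigma_1$; note that $f(i-1)=i-1$ would forbid appending $\sigma_{i-1}$ — so I must first check this does \emph{not} happen, i.e. the value at position $i-1$ is $i-1$, which means $w\sigma_{i-1}\notin L_n$! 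This tells me the intended starting function must have something other than $i-1$ at position $i-1$; re-reading, position $i-1$ holds $0$, so appending $\sigma_{i-1}$ is fine since $f(i-1)=0\ne i-1$. Good. The tail entries $m_k\in\{1,k\}$ for $k>i$ are unaffected throughout because appending letters $\sigma_\ell$ with $\ell\le i-1$ only modifies positions $\le i$.

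**Main obstacle.** The real work is bookkeeping: at each application of Proposition~\ref{P:minimal_vw_encoding}, four cases ($i<j-1$, $i=j-1$, $i=j$, $i>j$) must be tracked, and the tail values $m_k\in\{1,k\}$ and the ``$j$'' values that get shifted to ``$j+1 \mapsto j$'' must be followed precisely. The subtle points where errors creep in are: (i) verifying admissibility at each step, i.e. that the letter we append does not equal the current $f$-value at that index, which is exactly the statement ``$w\cdot[\,\cdot\,]\in L_n$'' that the proposition also asks us to prove; (ii) handling the transition at index $j+1$, where $f(j+1)$ might be $j+2$ (the ``otherwise'' case) versus $-1$ or $j+1$ (special cases) — the formula in Proposition~\ref{P:minimal_vw_encoding} for $i>j$ has the branch ``$j$ if $f(i)=j+1$'', and I need to confirm which branch applies at each moment; and (iii) the boundary cases ($j=1$ in (a), $j=i-1$ in (b), $j=1$ in (c)) where the repeating pattern terminates and must be reconciled with the general formula. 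None of these is deep, but getting the indices exactly right across the whole chain is the crux; the cleanest presentation is a single induction on the number of letters appended, with the general-position formula as hypothesis and an explicit check of the terminal step.
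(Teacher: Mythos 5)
Your proposal is correct and is exactly the paper's argument: the paper's entire proof reads ``The claims easily follow by induction on $j$, using Proposition~\ref{P:minimal_vw_encoding}'', which is the letter-by-letter induction you describe (with admissibility of each appended letter, via the criterion $f(j)\ne j$, supplying the membership claims $w\cdot[\,\cdot\,]\in L_n$). Your bookkeeping of the four cases of Proposition~\ref{P:minimal_vw_encoding} and of the boundary steps is the right way to fill in the details the paper omits.
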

\begin{proof}
The claims easily follow by induction on $j$, using Proposition~\ref{P:minimal_vw_encoding}.
\end{proof}

\begin{corollary}\label{C:images_encoding}
If $S=\{i_1,\dots,i_r\}$ is a subset of $\{1,\dots,n-2\}$, possibly empty, with $i_1>\dots>i_r$,
then $w_S = [n-1,1]\cdot[1,i_1]\cdot[i_1,1]\cdots\cdot[1,i_r]\cdot[i_r,1]\in L_n$, and
\[
   f_{w_S}(j) =
     \begin{cases}
       0 & \text{if $j=1$} \\
       1 & \text{if $j-1\in S$} \\
       j & \text{otherwise} \;.
     \end{cases}
\]
\end{corollary}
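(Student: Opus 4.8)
The plan is to prove the corollary by induction on $|S|$, using Proposition~\ref{P:images_encoding} as the engine. The base case $S=\emptyset$ is part~(a) of Proposition~\ref{P:images_encoding} with $j=1$: we get $w_\emptyset=[n-1,1]\in L_n$ and $f_{w_\emptyset}=[0,2,3,\dots,n-1]$, which matches the claimed formula since no index $j-1$ lies in the empty set $S$. The idea for the inductive step is that appending the block $[1,i_t]\cdot[i_t,1]$ to a word with encoding of the form $[0,2,3,\dots,n-1]$ (possibly with some entries already reset to $1$) flips the entry in position $i_t+1$ to $1$ and leaves all other entries unchanged, while keeping the word in $L_n$; this is exactly what parts~(b) and~(c) of Proposition~\ref{P:images_encoding} deliver.

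In more detail: suppose $S=\{i_1,\dots,i_r\}$ with $i_1>\dots>i_r$, and write $S'=\{i_1,\dots,i_{r-1}\}$, so $w_S = w_{S'}\cdot[1,i_r]\cdot[i_r,1]$. By the inductive hypothesis $w_{S'}\in L_n$ and $f_{w_{S'}}(j)$ equals $0$ if $j=1$, equals $1$ if $j-1\in S'$, and equals $j$ otherwise. Since $i_r<i_{r-1}<\dots<i_1$, none of $i_r,\ i_r+1,\ \dots$ lies in $S'$ (indeed $i_r$ is the smallest element of $S$), so the tail of $f_{w_{S'}}$ starting at position $i_r$ reads $[i_r,\,i_r+1,\,\dots,\,n-1]$; that is, $f_{w_{S'}}$ has precisely the shape $[0,2,3,\dots,i,m_{i+1},\dots,m_{n-1}]$ required in Proposition~\ref{P:images_encoding}(b), with $i=i_r$ and each $m_k\in\{1,k\}$ (value $1$ exactly when $k-1\in S'$). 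Applying part~(b) with $j=i-1=i_r-1$ gives $w_{S'}\cdot[1,i_r-1]\in L_n$ with encoding $[1,\dots,i_r-2,0,i_r-1,m_{i_r+1},\dots,m_{n-1}]$, and then part~(c) with $j=1$ gives $w_{S'}\cdot[1,i_r-1]\cdot[i_r-1,1] = w_S\in L_n$ with encoding $[0,2,3,\dots,i_r-1,1,m_{i_r+1},\dots,m_{n-1}]$. Comparing this with the claimed formula for $f_{w_S}$: position $1$ is $0$; position $i_r+1$ is $1$ because $i_r\in S$; each position $k+1$ with $k>i_r$ keeps its old value $m_{k}$, which is $1$ iff $k-1\in S'$ iff $k-1\in S$ (as $k-1\ge i_r$ forces $k-1\notin S$ unless $k-1=i_r$, already handled); and all positions $\le i_r-1$ other than $1$ retain the value $j$. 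This is exactly $f_{w_S}$ as stated, completing the induction.

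One bookkeeping point deserves care, and I expect it to be the main obstacle: matching the index bounds in Proposition~\ref{P:images_encoding}(b)–(c) with the concatenation $[1,i_r]\cdot[i_r,1]$ appearing in the definition of $w_S$. Proposition~\ref{P:images_encoding}(b) is stated for the word $w\cdot[1,j]$ with $1\le j\le i-1$, so its conclusion for $j=i-1$ describes $w\cdot[1,i-1]$, i.e.\ the product $\sigma_1\sigma_2\cdots\sigma_{i-1}$, whereas $w_S$ uses the block $[1,i_r]=\sigma_1\sigma_2\cdots\sigma_{i_r}$. One must therefore observe that $[1,i_r]=[1,i_r-1]\cdot\sigma_{i_r}$ and that, after applying part~(b) at $j=i_r-1$, one more letter $\sigma_{i_r}$ is exactly what part~(c) consumes as the first letter of its block $[i-1,1]$ (with the relabelled $i$): indeed $[i_r-1,1]$ does not begin with $\sigma_{i_r}$, so instead one should read the block as starting from $[1,i_r]$ directly and peel it as $[1,i_r]=[1,i_r-1]\cdot[i_r,i_r]$, then append $[i_r,1]$, noting $[i_r,i_r]\cdot[i_r,1]$ is just $[i_r,1]$ preceded by one $\sigma_{i_r}$ — or, more cleanly, absorb the stray $\sigma_{i_r}$ into the start of $[i_r,1]$ so that the two blocks together equal $[1,i_r-1]\cdot[i_r,1]$, matching the hypotheses of~(b) and~(c) verbatim. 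Once this alignment is set up correctly, the rest is the routine induction above, and the final encoding formula follows by reading off which positions have been reset to $1$.
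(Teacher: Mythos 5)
Your overall strategy --- induction on $r$, using Proposition~\ref{P:images_encoding} to track the effect of each block $[1,i_t]\cdot[i_t,1]$ on the admissible function --- is exactly the argument the paper intends (its proof is the one-line remark that the claim follows from Proposition~\ref{P:images_encoding} by induction on $r$). However, your execution of the inductive step contains an off-by-one error which you correctly sensed but did not correctly repair. You apply parts (b) and (c) with $i=i_r$, which produces statements about the word $w_{S'}\cdot[1,i_r-1]\cdot[i_r-1,1]$; this is not $w_S=w_{S'}\cdot[1,i_r]\cdot[i_r,1]$ (the appended blocks have lengths $2i_r-2$ and $2i_r$, respectively), and the encoding you derive, $[0,2,\dots,i_r-1,1,m_{i_r+1},\dots,m_{n-1}]$, has its entry $1$ in position $i_r$ rather than in position $i_r+1$ as the corollary requires. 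The ``absorption'' proposed in your last paragraph cannot work: $[1,i_r]\cdot[i_r,1]$ and $[1,i_r-1]\cdot[i_r,1]$ are distinct words of different lengths, so no re-reading of the blocks identifies them.

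The correct repair is simply to apply Proposition~\ref{P:images_encoding} with $i=i_r+1$ (note that $i_r\le n-2$, so $i_r+1\in[n]$). Since every element of $S'$ exceeds $i_r$, the inductive hypothesis gives $f_{w_{S'}}(j)=j$ for $j=2,\dots,i_r+1$, so $f_{w_{S'}}$ has the shape $[0,2,\dots,i_r+1,m_{i_r+2},\dots,m_{n-1}]$ required by part (b) with $i=i_r+1$. Part (b) at $j=i-1=i_r$ then describes exactly $w_{S'}\cdot[1,i_r]$, part (c) at $j=1$ describes exactly $w_{S'}\cdot[1,i_r]\cdot[i_r,1]=w_S$, and the resulting encoding $[0,2,\dots,i_r,1,m_{i_r+2},\dots,m_{n-1}]$ places the $1$ in position $i_r+1$, as claimed. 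With this change the rest of your argument goes through verbatim.
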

\begin{proof}
The claim follows from Proposition~\ref{P:images_encoding} by induction on $r$.
\end{proof}

\begin{corollary}\label{C:lower_bound}
Let $n>1$. The set $\{F_n^{\min}(w) \mid w\in L_n\}$ is finite, but it has at least $2^{n-2}$ elements.
\end{corollary}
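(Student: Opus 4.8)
The plan is to prove the two assertions separately. Finiteness is immediate from Corollary~\ref{C:existence_encoding}: for every $w\in L_n$ the set $F_n^{\min}(w)$ equals $F_{f_w}$ for some admissible function $f_w:[n]\to\{-1,0\}\cup[n]$, and there are only finitely many such functions (at most $(n+1)^{n-1}$, say). Hence $\{F_n^{\min}(w)\mid w\in L_n\}$ is finite. The substance of the corollary is therefore the lower bound $2^{n-2}$, and for this I would exhibit $2^{n-2}$ distinct words whose minimal forbidden prefix sets are pairwise distinct.

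The key step is to invoke Corollary~\ref{C:images_encoding}: for each subset $S\subseteq\{1,\dots,n-2\}$ it produces an explicit word $w_S\in L_n$ together with a formula for $f_{w_S}$, namely $f_{w_S}(1)=0$, $f_{w_S}(j)=1$ if $j-1\in S$, and $f_{w_S}(j)=j$ otherwise. From this formula one reads off $S$ back from $f_{w_S}$: indeed $S=\{\,j-1 \mid 2\le j\le n-1,\ f_{w_S}(j)=1\,\}$, so distinct subsets $S$ give distinct functions $f_{w_S}$, hence (by the uniqueness in Corollary~\ref{C:existence_encoding}) distinct sets $F_n^{\min}(w_S)=F_{f_{w_S}}$. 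Since $\{1,\dots,n-2\}$ has $2^{n-2}$ subsets, this yields $2^{n-2}$ distinct members of $\{F_n^{\min}(w)\mid w\in L_n\}$, completing the proof. One should double-check the edge case $n=2$: then $\{1,\dots,n-2\}=\emptyset$, the only subset is $S=\emptyset$, $w_\emptyset=[n-1,1]=\sigma_1$, and the bound $2^{0}=1$ is trivially attained.

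The only genuine obstacle is making sure Corollary~\ref{C:images_encoding} is applicable with all the words $w_S$ lying in $L_n$ — but this is exactly what that corollary (via Proposition~\ref{P:images_encoding}, itself an induction on $j$ using Proposition~\ref{P:minimal_vw_encoding}) already guarantees, so at the level of this corollary there is nothing left to do beyond the injectivity observation above. I would keep the write-up to a couple of sentences: state finiteness from Corollary~\ref{C:existence_encoding}, then for the lower bound cite Corollary~\ref{C:images_encoding}, note that $f_{w_S}$ determines $S$, and conclude.
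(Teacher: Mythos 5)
Your proposal is correct and follows exactly the paper's argument: finiteness from the encoding by admissible functions (Corollary~\ref{C:existence_encoding}), and the lower bound from the $2^{n-2}$ words $w_S$ of Corollary~\ref{C:images_encoding}, whose functions $f_{w_S}$ are pairwise distinct. You merely spell out the injectivity of $S\mapsto f_{w_S}$, which the paper leaves implicit.
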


\begin{proof}
The set is finite as each $F_n^{\min}(w)$ is equal to $F_f$ for some admissible function $f$. It has at least $2^{n-2}$ elements by Corollary~\ref{C:images_encoding}
\end{proof}

Before using, in Section~\ref{S:counting_braids_with_prefixes}, our description of forbidden prefixes to compute the numbers $x_{n,k}(w,m)$ required by Algorithm~\ref{A:random}, we show in the next section that minimal forbidden prefixes yield a finite state automaton accepting the language of lex-representative words, that has the minimal possible number of states.

\section{A minimal finite state automaton accepting~$L_n$}\label{S:automaton}

The sets of minimal forbidden prefixes after a given word $w\in L_n$ provide a very natural way to construct a finite state automaton that accepts the language $L_n$ of lex-representatives of braids in $M$. Indeed, we will see that this finite state automaton is minimal, in the sense that it has the minimal possible number of states.

Recall that a finite state automaton is a quintuple $\Gamma=(\SS,\A,\mu,Y,S_0)$ where $\SS$ is a finite set, $\A$ is the alphabet, $\mu:\ \SS\times \A \rightarrow \SS$ is the transition function, $Y\subseteq \SS$ is the set of accepted states and $S_0\in \SS$ is the initial state~\cite{Epstein}.
We extend $\mu$ to a function $\SS\times \A^* \rightarrow \SS$, also denoted by $\mu$, in the natural way.

\begin{definition}
Denote $\Gamma_n=(\SS,\A,\mu,Y,S_0)$, where
$\A = \{\sigma_1,\ldots,\sigma_{n-1}\}$,\linebreak
$\SS = \big\{S\subset \A^* \mid S=F_n^{\min}(w) \mbox{ for some } w\in L_n\big\}\cup \{\A\}$,
$S_0 = \emptyset = F_n^{\min}(\epsilon)$,
$Y = \SS\backslash \{\A\}$,
and $\mu:\SS\times\A\to\SS$ is given by
\[
  (S,\sigma_i) \mapsto
    \begin{cases}
       \A & \text{if $\sigma_i\in S$} \\
       \displaystyle\min_{\preccurlyeq}\big(\{\sigma_1,\ldots,\sigma_{i-2},\sigma_{i-1}\sigma_i\}
                   \cup \{\sigma_i\backslash \beta\mid\beta\in S\}\big) & \text{otherwise} \;.
    \end{cases}
\]
\end{definition}

\begin{proposition}\label{P_Gamma_minimal}
$\Gamma_n$ is a finite state automaton accepting the language $L_n$.
Moreover, any finite state automaton accepting the language $L_n$ has at least as many states as $\Gamma_n$.
\end{proposition}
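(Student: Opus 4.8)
The plan is to prove the two assertions separately. For the first, that $\Gamma_n$ accepts $L_n$, I would show by induction on $|w|$ that for every $w\in\A^*$ one has $\mu(S_0,w)=F_n^{\min}(w)$ whenever $w\in L_n$, and $\mu(S_0,w)=\A$ whenever $w\notin L_n$. The base case $w=\epsilon$ is immediate since $S_0=F_n^{\min}(\epsilon)$. For the inductive step, write $w=v\sigma_i$. If $v\notin L_n$ then $w\notin L_n$ and $\mu(S_0,v)=\A$ by induction; since $\sigma_i\in\A$, the transition rule sends $\A$ to $\A$, as required (note $\A\in\SS$ and $\sigma_i\in\A$, so the first case of $\mu$ applies). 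If $v\in L_n$, then by induction $\mu(S_0,v)=F_n^{\min}(v)=:S$. Now $w=v\sigma_i\in L_n$ if and only if $\sigma_i\notin F_n(v)$, which (since $\sigma_i$ is an atom) holds if and only if $\sigma_i\notin F_n^{\min}(v)=S$. In the case $\sigma_i\in S$ we get $w\notin L_n$ and $\mu(S,\sigma_i)=\A$; once we are in state $\A$, any further letter keeps us in $\A$, so all extensions of $w$ are correctly rejected. In the case $\sigma_i\notin S$ we get $w\in L_n$, and the second branch of $\mu$ computes exactly the set described in Proposition~\ref{P:minimal_vw}, namely $F_n^{\min}(v\sigma_i)=F_n^{\min}(w)$. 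This completes the induction, and since $Y=\SS\setminus\{\A\}$, the automaton accepts precisely $L_n$. (One must also check $\Gamma_n$ is well-defined, i.e.\ that $\mu$ really maps into $\SS$: the image under the second branch is $F_n^{\min}(v\sigma_i)$ for some $v\sigma_i\in L_n$, hence lies in $\SS$ by definition; the first branch gives $\A\in\SS$.)

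For the minimality claim, I would use the standard Myhill--Nerode argument. Let $\sim$ be the Nerode right-congruence on $\A^*$ associated with the language $L_n$: $u\sim u'$ iff for all $z\in\A^*$, $uz\in L_n\Leftrightarrow u'z\in L_n$. The number of equivalence classes of $\sim$ is a lower bound for the number of states of any automaton accepting $L_n$ (this is the Myhill--Nerode theorem). So it suffices to show that $\sim$ has at least $|\SS|$ classes; in fact I will show it has exactly $|\SS|$ classes, the classes being in bijection with the states of $\Gamma_n$. The key observation is that for $w\in L_n$, the set of ``continuations'' $\{z\in\A^*\mid wz\in L_n\}$ is entirely determined by $F_n^{\min}(w)$: indeed, a word $w\sigma_{i_1}\cdots\sigma_{i_s}$ lies in $L_n$ iff at each step the relevant atom is not a forbidden prefix, and the forbidden-prefix sets evolve according to the deterministic rule of Proposition~\ref{P:minimal_vw_encoding}, which depends only on the current $F_n^{\min}$. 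Hence if $w,w'\in L_n$ with $F_n^{\min}(w)=F_n^{\min}(w')$, then $w\sim w'$. Conversely, all words not in $L_n$ form a single $\sim$-class (the ``dead'' class, since once a word leaves $L_n$ no extension re-enters it, as $L_n$ is prefix-closed). Thus the number of $\sim$-classes is at most $|\{F_n^{\min}(w)\mid w\in L_n\}|+1=|\SS|$.

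It remains to show the reverse: distinct states of $\Gamma_n$ give distinct $\sim$-classes, i.e.\ if $F_n^{\min}(w)\ne F_n^{\min}(w')$ for $w,w'\in L_n$, then $w\not\sim w'$, and also that no $w\in L_n$ is $\sim$-equivalent to a word outside $L_n$ (the latter is clear, taking $z=\epsilon$). For the former, I would argue that the minimal forbidden prefixes are themselves witnesses of inequivalence: if $\alpha\in F_n^{\min}(w)\setminus F_n^{\min}(w')$, I claim $z=\omega(\alpha)$ separates $w$ and $w'$ in the sense needed. By definition of $F_n(w)$, $w\,\omega(\alpha)\notin L_n$. For $w'$, since $\alpha\notin F_n^{\min}(w')$ and (by Lemma~\ref{L:quadrants}) $F_n(w')$ is an up-set that is the union of $\beta M$ over $\beta\in F_n^{\min}(w')$, $\alpha$ is a forbidden prefix after $w'$ only if some $\beta\in F_n^{\min}(w')$ is a prefix of $\alpha$; by $\preccurlyeq$-minimality of $\alpha$ in $F_n(w)$ this forces $\beta=\alpha\in F_n^{\min}(w')$, contradiction. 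Hence $\alpha\notin F_n(w')$, i.e.\ $w'\,\omega(\alpha)\in L_n$. But this does not yet distinguish $w$ and $w'$ as \emph{right-}extensions unless $\omega(\alpha)$ is a common continuation word; the subtlety is that $w\,\omega(\alpha)\notin L_n$ means $w\,\omega(\alpha)$ is not lex-minimal, which is exactly the condition $\omega(\alpha)\notin\{z\mid wz\in L_n\}$ being violated — wait, more carefully: $z\in\A^*$ with $wz\in L_n$ requires $wz$ itself to be a lex-representative; and $w\,\omega(\alpha)\notin L_n$ says precisely $\omega(\alpha)$ is \emph{not} such a $z$ for $w$, while $w'\,\omega(\alpha)\in L_n$ says it \emph{is} for $w'$. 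So $z=\omega(\alpha)$ witnesses $w\not\sim w'$, and the bijection between states and $\sim$-classes is established. This gives $|\SS|$ equals the number of Nerode classes, which by Myhill--Nerode is the minimal number of states, so every automaton accepting $L_n$ has at least $|\SS|$ states, proving the proposition.

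The main obstacle I anticipate is the careful handling of the dead state and the ``up-set'' argument showing $\alpha\notin F_n^{\min}(w')$ implies $\alpha\notin F_n(w')$ — one must use both Lemma~\ref{L:quadrants} (forbidden prefixes are closed under right multiplication, so $F_n(w')=\bigcup_{\beta\in F_n^{\min}(w')}\beta M$) and the $\preccurlyeq$-minimality of $\alpha$ within $F_n(w)$ to rule out a \emph{proper} prefix of $\alpha$ lying in $F_n^{\min}(w')$. A second point needing care is confirming that the right-congruence class of $w\in L_n$ is genuinely a function of $F_n^{\min}(w)$ alone and not of $b(w)$ or $w$ itself; this follows from Propositions~\ref{P:minimal_vw} and \ref{P:minimal_vw_encoding}, which show the transition structure on forbidden-prefix sets is self-contained, but it should be stated explicitly.
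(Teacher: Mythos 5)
Your overall strategy coincides with the paper's: the acceptance half is the same induction on $|w|$ (tracking the fail state $\A$ once a word leaves $L_n$), and your Myhill--Nerode framing is just a formalisation of the paper's direct argument that distinct sets $F_n^{\min}(w)$ force distinct states in any acceptor. The acceptance half is correct. However, there is a genuine flaw in your separation step. You choose $\alpha\in F_n^{\min}(w)\setminus F_n^{\min}(w')$ and claim $\alpha\notin F_n(w')$, justifying this by saying that if $\beta\in F_n^{\min}(w')$ with $\beta\preccurlyeq\alpha$, then the $\preccurlyeq$-minimality of $\alpha$ in $F_n(w)$ forces $\beta=\alpha$. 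This is a non sequitur: minimality of $\alpha$ in $F_n(w)$ excludes proper prefixes of $\alpha$ from $F_n(w)$, but says nothing about proper prefixes of $\alpha$ lying in $F_n(w')$. Indeed the conclusion is false for some admissible choices. In $B_3^+$ take $w=\sigma_2\sigma_1\sigma_1$ and $w'=\sigma_2\sigma_1$, so that $F_3^{\min}(w)=\{\sigma_2\sigma_1\}$ and $F_3^{\min}(w')=\{\sigma_2\}$ (see Figure~\ref{FSA_B3}). Then $\alpha=\sigma_2\sigma_1\in F_3^{\min}(w)\setminus F_3^{\min}(w')$, yet $\alpha\in\sigma_2 M\subseteq F_3(w')$ by Lemma~\ref{L:quadrants}; hence $w'\,\omega(\alpha)\notin L_3$ as well, and $z=\omega(\alpha)$ separates nothing.

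The repair is exactly what the paper does: observe that $F_n^{\min}(w)\ne F_n^{\min}(w')$ implies $F_n(w)\ne F_n(w')$ (each set determines the other, by Lemma~\ref{L:quadrants} and Noetherianity), then pick $\alpha$ in the symmetric difference of the \emph{full} forbidden-prefix sets, say $\alpha\in F_n(w)\setminus F_n(w')$ after possibly swapping $w$ and $w'$; no minimality is needed, since $w\,\omega(\alpha)\notin L_n$ and $w'\,\omega(\alpha)\in L_n$ hold by definition. (If you want a minimal witness, the correct statement is that \emph{some} $\alpha\in F_n^{\min}(w)$ satisfies $\alpha\notin F_n(w')$: take $\gamma\in F_n(w)\setminus F_n(w')$ and a $\preccurlyeq$-minimal $\alpha\preccurlyeq\gamma$ in $F_n(w)$; if $\alpha\in F_n(w')$ then $\gamma\in\alpha M\subseteq F_n(w')$, a contradiction. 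But the side of the symmetric difference is forced and cannot be chosen arbitrarily, as the example above shows.) One further small omission: you should record that $\SS$ is finite --- via Corollary~\ref{C:existence_encoding}, each state equals $F_f$ for one of the finitely many admissible functions $f$ --- since this is needed for $\Gamma_n$ to be a finite state automaton at all.
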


\begin{proof}
By Corollary~\ref{C:existence_encoding}, every state $S=F_n^{\min}(w)\in\SS$ is uniquely determined by an admissible function.  As the number of admissible functions is finite, $\Gamma_n$ is indeed a finite state automaton.

If $w\in L_n$, then it follows from Proposition~\ref{P:minimal_vw} by induction on $|w|$ that
$\mu(S_0,w)=F_n^{\min}(w)\in Y$, so $w$ is accepted.
Conversely, if $w\notin L_n$, let $w=v\sigma_i v'$, where $v$ is the initial subword of maximal length of $w$ that lies in $L_n$.  As above, we have $\mu(S_0,v)=F_n^{\min}(v)$.  Moreover, since $v\sigma_i\notin L_n$, we have $\sigma_i\in F_n(v)$ and thus $\sigma_i\in F_n^{\min}(v)=\mu(S_0,v)$, as $\sigma_i$ is an atom.
Then, by the definition of $\mu$, we have $\mu(S_0,v\sigma_i)=\A$, and further (by induction on $|v'|$)
$\mu(S_0,w)=\mu(S_0,v\sigma_i v')=\A$, so $w$ is not accepted.

In order to show that $\Gamma_n$ has the minimal possible number of states, assume that $\Gamma'$ is a finite state automaton accepting the language $L_n$.
Let $w_1, w_2\in L_n$ be such that $F_n^{\min}(w_1)\neq F_n^{\min}(w_2)$.
This implies $F_n(w_1)\neq F_n(w_2)$, so by symmetry, assume that there exists some
$\alpha\in F_n(w_1)\backslash F_n(w_2)$, and consider the word $w=\omega(\alpha)\in L_n$. As $\alpha$ is forbidden after $w_1$, we have $w_1w\notin L_n$, so reading $w$ starting in the state corresponding to $w_1$ one ends at a fail state.
However, as $\alpha$ is not forbidden after $w_2$, one has $w_2w\in L_n$, so reading $w$ starting in the state corresponding to $w_2$ one ends at an accepted state; in particular, the states corresponding to $w_1$ and $w_2$ in $\Gamma'$ must be distinct.
As $\Gamma'$ has at least one fail state, the number of states of $\Gamma'$ is at least the number of states of~$\Gamma_n$.
\end{proof}

\begin{example} For $n=3$, the automaton $\Gamma_n$ has 5 accepted states; it is represented in Figure~\ref{FSA_B3}. Recall that the initial state corresponds to the empty set, and notice that we did not represent the fail state:  as usual, arrows which are not drawn lead to the fail state $\{\sigma_1,\sigma_2\}$.
\begin{figure}[ht]
\begin{center}
\begin{tikzpicture}[->,>=stealth',shorten >=1pt,auto,node distance=2.5cm, semithick]
  \node[initial,rounded corners,draw] (1)                    {$\;\;\emptyset\;\;$};
  \node[rounded corners,draw]         (2) [right of=1]       {$\{ \sigma_1\sigma_2 \}$};
  \node[rounded corners,draw]         (3) [above right of=2] {$\{ \sigma_2 \}$};
  \node[rounded corners,draw]         (4) [below right of=3] {$\{ \sigma_2\sigma_1 \}$};
  \node[rounded corners,draw]         (5) [below right of=2] {$\{ \sigma_1 \}$};

  \path (1) edge [loop above] node {$\sigma_1$} (1);
  \path (1) edge              node {$\sigma_2$} (2);
  \path (2) edge              node {$\sigma_1$} (3);
  \path (2) edge [loop right] node {$\sigma_2$} (2);
  \path (3) edge              node {$\sigma_1$} (4);
  \path (4) edge [loop right] node {$\sigma_1$} (4);
  \path (4) edge              node {$\sigma_2$} (5);
  \path (5) edge              node {$\sigma_2$} (2);
\end{tikzpicture}
\end{center}\vspace{-2ex}
\caption{Finite state automaton accepting lex-representatives in $B_3^+$.}\label{FSA_B3}
\end{figure}
\end{example}

\begin{example} For $n=4$, the automaton $\Gamma_n$ has 18 accepted states; it is represented in Figure~\ref{FSA_B4}.
As above, the initial state corresponds to the empty set, and we did not represent the fail state $\{\sigma_1,\sigma_2,\sigma_3\}$.

\begin{figure}[ht]
\begin{center}\tiny
\begin{tikzpicture}[->,>=stealth',shorten >=1pt,auto, semithick]
\node[initial,rounded corners,draw] (1) at (0,0) {$\;\;\emptyset\;\;$};
\node[rounded corners,draw] (2)  at (-2,-1)    {$\{ \sigma_1\sigma_2 \}$};
\node[rounded corners,draw] (3)  at (0,-2)     {$\{ \sigma_1,\sigma_2\sigma_3 \}$};
\node[rounded corners,draw] (4)  at (-2,-2)    {$\{ \sigma_2 \}$};
\node[rounded corners,draw] (5)  at (0,-5)     {$\{ \sigma_1\sigma_2,\sigma_3 \}$};
\node[rounded corners,draw] (6)  at (-2,-3)    {$\{ \sigma_2\sigma_1 \}$};
\node[rounded corners,draw] (7)  at (1,-4)     {$\{ \sigma_2,\sigma_3 \}$};
\node[rounded corners,draw] (8)  at (5.5,-4)   {$\{ \sigma_1\sigma_2,\sigma_3\sigma_2 \}$};
\node[rounded corners,draw] (9)  at (-2,-4)    {$\{ \sigma_1 \}$};
\node[rounded corners,draw] (10) at (2,-5)     {$\{ \sigma_2\sigma_1,\sigma_3 \}$};
\node[rounded corners,draw] (11) at (6,-2)     {$\{ \sigma_2,\sigma_3\sigma_2\sigma_1 \}$};
\node[rounded corners,draw] (12) at (1,-3)     {$\{ \sigma_1,\sigma_2 \}$};
\node[rounded corners,draw] (13) at (3,-4)     {$\{ \sigma_1,\sigma_3\sigma_2 \}$};
\node[rounded corners,draw] (14) at (3,1)      {$\{ \sigma_2\sigma_1,\sigma_3\sigma_2\sigma_1 \}$};
\node[rounded corners,draw] (15) at (1.5,-1)   {$\{ \sigma_1,\sigma_2\sigma_1,\sigma_2\sigma_3 \}$};
\node[rounded corners,draw] (16) at (4,0)      {$\{ \sigma_1,\sigma_3\sigma_2\sigma_1 \}$};
\node[rounded corners,draw] (17) at (2.5,-2.5) {$\{ \sigma_1,\sigma_3 \}$};
\node[rounded corners,draw] (18) at (4.5,-1)   {$\{ \sigma_1\sigma_2,\sigma_3\sigma_2\sigma_1 \}$};

\path (1)  edge [loop above,left] node {$\sigma_1$} (1);
\path (1)  edge  node {$\sigma_2$} (2);
\path (1)  edge [left] node {$\sigma_3$} (3);
\path (2)  edge [left] node {$\sigma_1$} (4);
\path (2)  edge [loop above] node {$\sigma_2$} (2);
\path (2)  edge [above] node {$\sigma_3$} (3);
\path (3)  edge [left] node {$\sigma_2$} (5);
\path (3)  edge [loop right,left] node {$\sigma_3$} (3);
\path (4)  edge [left] node {$\sigma_1$} (6);
\path (4)  edge [above] node {$\sigma_3$} (3);
\path (5)  edge [right] node {$\sigma_1$} (7);
\path (5)  edge [bend right=45,below right] node {$\sigma_2$} (8);
\path (6)  edge [loop left,right] node {$\sigma_1$} (6);
\path (6)  edge [left] node {$\sigma_2$} (9);
\path (6)  edge [left] node {$\sigma_3$} (3);
\path (7)  edge [right] node {$\sigma_1$} (10);
\path (8)  edge [right] node {$\sigma_1$} (11);
\path (8)  edge [loop right,left] node {$\sigma_2$} (8);
\path (8)  edge [above] node {$\sigma_3$} (12);
\path (9)  edge [bend left=90] node {$\sigma_2$} (2);
\path (9)  edge [right] node {$\sigma_3$} (3);
\path (10) edge [loop right, left] node {$\sigma_1$} (10);
\path (10) edge [left] node {$\sigma_2$} (13);
\path (11) edge [bend right=65,right] node {$\sigma_1$} (14);
\path (11) edge [below] node {$\sigma_3$} (15);
\path (12) edge [right] node {$\sigma_3$} (3);
\path (13) edge [below] node {$\sigma_2$} (8);
\path (13) edge [below left] node {$\sigma_3$} (12);
\path (14) edge [loop above,left] node {$\sigma_1$} (14);
\path (14) edge [below left] node {$\sigma_2$} (16);
\path (14) edge [left] node {$\sigma_3$} (15);
\path (15) edge [right] node {$\sigma_2$} (17);
\path (15) edge [left] node {$\sigma_3$} (3);
\path (16) edge [right] node {$\sigma_2$} (18);
\path (16) edge [above] node {$\sigma_3$} (15);
\path (17) edge [above] node {$\sigma_2$} (8);
\path (18) edge [left] node {$\sigma_1$} (11);
\path (18) edge [loop right,left] node {$\sigma_2$} (18);
\path (18) edge [above] node {$\sigma_3$} (15);
\end{tikzpicture}
\end{center}\vspace{-5ex}
\caption{Finite state automaton accepting lex-representatives in $B_4^+$.}\label{FSA_B4}
\end{figure}
\end{example}

\begin{proposition}\label{P:automaton_exponential}
Any finite state automaton accepting the language $L_n$ has at least $2^{n-2}$ states.
\end{proposition}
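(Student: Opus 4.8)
The plan is to combine the lower bound on the number of distinct sets $F_n^{\min}(w)$ from Corollary~\ref{C:lower_bound} with the minimality part of Proposition~\ref{P_Gamma_minimal}. Specifically, by Corollary~\ref{C:lower_bound} there are at least $2^{n-2}$ distinct sets among $\{F_n^{\min}(w)\mid w\in L_n\}$, so the automaton $\Gamma_n$ has at least $2^{n-2}$ accepted states (indeed at least $2^{n-2}+1$ states counting the fail state $\A$). By the second assertion of Proposition~\ref{P_Gamma_minimal}, every finite state automaton accepting $L_n$ has at least as many states as $\Gamma_n$. Chaining these two facts immediately gives the bound $2^{n-2}$ for an arbitrary acceptor of $L_n$.

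Concretely, I would argue as follows. Let $\Gamma'$ be any finite state automaton accepting $L_n$. The words $w_S$ constructed in Corollary~\ref{C:images_encoding}, indexed by subsets $S\subseteq\{1,\dots,n-2\}$, lie in $L_n$ and have pairwise distinct defining functions $f_{w_S}$, hence pairwise distinct sets $F_n^{\min}(w_S)$. For any two subsets $S\neq S'$ we therefore have $F_n^{\min}(w_S)\neq F_n^{\min}(w_{S'})$, and the argument in the proof of Proposition~\ref{P_Gamma_minimal} shows that $w_S$ and $w_{S'}$ must be read by $\Gamma'$ into distinct states: picking $\alpha$ in the symmetric difference $F_n(w_S)\triangle F_n(w_{S'})$ and the word $\omega(\alpha)$, one of $w_S\omega(\alpha)$, $w_{S'}\omega(\alpha)$ lies in $L_n$ while the other does not, so $\Gamma'$ cannot be in the same state after reading $w_S$ and after reading $w_{S'}$. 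As $S$ ranges over the $2^{n-2}$ subsets of $\{1,\dots,n-2\}$, this produces $2^{n-2}$ pairwise distinct states of $\Gamma'$, giving the claim.

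I do not expect any real obstacle here: both ingredients are already fully proved earlier in the excerpt, and the present proposition is essentially their conjunction. The only point requiring a little care is to make sure the Myhill--Nerode style distinguishing argument from Proposition~\ref{P_Gamma_minimal} is invoked correctly — namely that distinct minimal forbidden prefix sets force distinct states in \emph{any} acceptor, not just in $\Gamma_n$ — but since that implication is exactly what the second half of the proof of Proposition~\ref{P_Gamma_minimal} establishes, it suffices to cite it. (Strictly speaking one could even phrase the whole proof as: by Proposition~\ref{P_Gamma_minimal} it is enough to bound the number of states of $\Gamma_n$ from below, and that number is at least the number of distinct sets $F_n^{\min}(w)$, which is at least $2^{n-2}$ by Corollary~\ref{C:lower_bound}.)
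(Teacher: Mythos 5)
Your proof is correct and follows essentially the same route as the paper: the paper's own argument is precisely the two-line combination you give in your final parenthetical, namely that by Proposition~\ref{P_Gamma_minimal} it suffices to bound the number of states of $\Gamma_n$, and that bound is supplied by Corollary~\ref{C:lower_bound}. Your more explicit unwinding of the distinguishing argument for the words $w_S$ is a valid (if redundant) elaboration of the same idea.
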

\begin{proof}
By Proposition~\ref{P_Gamma_minimal} it is sufficient to show that $\Gamma_n$ has at least $2^{n-2}$ states.
The latter follows from Corollary~\ref{C:lower_bound}.
\end{proof}

\begin{remark}\label{R:exact_number_of_states}\em
We mentioned that the admissible functions that correspond to sets of minimal forbidden prefixes are precisely those satisfying the conditions of Corollary~\ref{C:properties_encoding}. This allows to give an exact expression for the number of states of $\Gamma_n$. The arguments are extremely technical, however, and as we do not use this result in the sequel, we skip the details here.

Just to give an idea, the following table contains the number of accepted states of $\Gamma_n$, for $n=3,\ldots,20$.
\begin{center}
\footnotesize
\begin{tabular}{|c||c|c|c|c|c|c|c|c|c|c|c|}
\hline
n\rule[1.5ex]{0pt}{1ex} & 3 & 4 & 5 & 6 & 7 & 8 & 9 & 10 & 11 & 12 & 13 \\
\hline
$|Y|$ \rule[1.5ex]{0pt}{1ex}& 5 & 18 & 56 & 161 & 443 & 1190 & 3156 & 8315 & 21835 & 57246 & 149970\\
\hline
\end{tabular}\smallskip

\begin{tabular}{|c||c|c|c|c|c|c|c|}
\hline
n\rule[1.5ex]{0pt}{1ex} &  14 & 15 & 16 & 17 & 18 & 19 & 20 \\\hline
$|Y|$\rule[1.5ex]{0pt}{1ex} &  392743 & 1028351 & 2692416 & 7049018 & 18454775 & 48315461 & 126491780 \\
\hline
\end{tabular}
\end{center}
\end{remark}
\vskip0ex

\begin{remark}\label{R:FSA_methods}\em
There are generic methods to generate uniformly random words of a regular language,
such as the \textsl{recursive method} \cite{Wilf,Flajolet}.
These algorithms have a precomputation phase, in which a (minimal) acceptor for the language in question is computed.

The results of this section show that the number of states of a minimal acceptor for $L_n$ is exponential in~$n$.
Thus, generating uniformly random words of $L_n$ by generic language-theoretic methods has a time complexity respectively a space complexity that is exponential in $n$.
While efficient for small values of $n$, such approaches are not feasible for larger values of $n$, as the above figures show.
\end{remark}

\section{Counting braids with suitable prefixes}\label{S:counting_braids_with_prefixes}

In this section we finally give a method to compute $x_{n,k}(w,m)$, where $m\geq 1$ if $w=\epsilon$, and $m\geq j-1$ if $w=w'\sigma_j$.
We assume that we have already computed $h_{s,t}$ for $0\le s\le n$ and $0\le t\le k$, as well as $x_{n,s}$ for $0\le s\le k$.  (See Section~\ref{S:counting_braids}.)

Recall from Definition~\ref{D:x_n_k} that $x_{n,k}(w,m)$ is the number of lex-representatives of length $k$ of the form $wv$ where $v\in L_n$ does not start with $\sigma_i$ for $i\le m$.
As the relations of $M$ are homogeneous, this amounts to counting the
braids $x$ of length $k-|w|$ for which $w\omega(x)\in L_n$ and $\omega(x)$ does not start with $\sigma_i$ for $i\le m$.
Knowing the number of braids $x$ of length $k-|w|$, it suffices to count those braids $x$ of length $k-|w|$ for which $w\omega(x)\notin L_n$, or $\omega(x)$ starts with $\sigma_i$ for $i\le m$.
If $m=0$, the second condition is never satisfied and we simply need to count $F_n(w)\cap M_{k-|w|}$.
If $m>0$, however, we have to add $\sigma_1,\dots,\sigma_m$ to the set of forbidden prefixes.

\begin{definition}\label{D:forbidden_prefixes_m}
Let $w\in L_n$ and $m\in\{0,\ldots,n-1\}$. We define $F_n^{\min}(w,m)$ to be the set of minimal elements, with respect to $\preccurlyeq$, in $\{\sigma_1,\ldots,\sigma_m\}\cup F_n^{\min}(w)$.
\end{definition}

Notice that $F_n^{\min}(w,m)$ is obtained from $F_n^{\min}(w)$ by removing all elements that start with $\sigma_1,\ldots,\sigma_{m}$, and adding the elements $\sigma_1,\ldots,\sigma_{m}$.  If $m\geq j-1$, where $\sigma_j$ is the last letter of $w$, this simplifies the description of the set $F_{n}^{\min}(w,m)$:

\begin{proposition}\label{P:description_minimal_m_sets}
Let $w\in L_n$, let $j=1$ if $w=\epsilon$, or $w=w'\sigma_j$ otherwise, and let $m\ge j-1$.
There is an admissible function $f$ such that $F_n^{\min}(w,m)=F_f$.  Moreover, the following hold:
\begin{enumerate}[\textup{(}a\textup{)}]\vspace*{-1ex}
\item
$f(r)=r$ for $r=1,\dots,m$ and $f(r)\ge0$ for $r=m+1,\dots,n-1$.
\item
$f(j)=0$ if $m=j-1$ and $f(j)=j$ if $m\ge j$.
\item
If $i\in\{j+1,\dots,n-1\}$ and $f(i)=0$, then $f(r)=0$ for $r=i,\dots,n-1$.
\item
If $i\in[n]$ and $0<f(i)<i$, then $f(i)\le j$.
\item
If $i,i'\in[n]$, such that $0<f(i)<i$ and $0<f(i')<i'$, then $i'>i$ implies
$f(i)\ge f(i')$.
\end{enumerate}

In particular, all elements of $F_n^{\min}(w,m)$ are of the form $\sigma_r\sigma_{r-1}\cdots \sigma_{f(r)}$ with $f(r)\leq r$, and those elements that are longer than one letter are nested:
If $f(i_t)<i_t$ ($1\le t\le s$) for $j<i_1<\dots<i_s$ then $j\ge f(i_1)\ge f(i_2)\ge\dots\ge f(i_s)$.
\end{proposition}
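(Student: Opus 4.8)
The plan is to prove Proposition~\ref{P:description_minimal_m_sets} by combining the structural information about $F_n^{\min}(w)$ (encoded by the admissible function $f_w$ from Corollary~\ref{C:existence_encoding}) with the effect of adjoining $\sigma_1,\dots,\sigma_m$ and taking $\preccurlyeq$-minimal elements, as described in Definition~\ref{D:forbidden_prefixes_m}. The key observation is that, since every element of $F_n^{\min}(w)$ has the form $\sigma_i\sigma_{i-1}\cdots\sigma_{f_w(i)}$ (for $i\in f_w^{-1}([n])$) or $\sigma_{i-1}\sigma_i$ (for $i\in f_w^{-1}(-1)$), adding the atoms $\sigma_1,\dots,\sigma_m$ and removing non-minimal elements has a completely explicit effect: an element $\sigma_i\sigma_{i-1}\cdots\sigma_{f_w(i)}$ survives in reduced form as $\sigma_i\cdots\sigma_{f_w(i)}$ if $f_w(i)>m$, it becomes redundant (a multiple of some $\sigma_r$ with $r\le m$) if $f_w(i)\le m<i$, and an element $\sigma_{i-1}\sigma_i$ with $i\in f_w^{-1}(-1)$ is killed precisely when $i-1\le m$. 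So the first task is simply to read off, from $f_w$ and $m$, what the reduced set is and to check it has the claimed form $F_f$ for the $f$ described in items~(a)--(e).

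First I would invoke Corollary~\ref{C:properties_encoding} to get the structure of $f_w$: item~(a) there gives $f_w(i)=i$ for $i<j-1$; item~(c) pins down $f_w(j)\in\{0,-1\}$; items~(b),(d),(e) control the behaviour for $i>j$, in particular that the values $f_w(i)$ with $0<f_w(i)<i$ form a weakly decreasing sequence bounded above by $j$ (or $j+1$ in the edge case involving $f_w(j-1)$), and that once $f_w$ hits $0$ past $j$ it stays $0$. Then, using $m\ge j-1$, I would trace through Definition~\ref{D:forbidden_prefixes_m}: the atoms $\sigma_1,\dots,\sigma_m$ absorb all of $\sigma_1,\dots,\sigma_{j-2}$ already present, absorb $\sigma_{j-1}\sigma_j$ (when $i=j$ contributes the $-1$ case, since $j-1\le m$), and absorb every chain $\sigma_i\cdots\sigma_{f_w(i)}$ whose terminal index $f_w(i)$ is $\le m$. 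What remains are the atoms $\sigma_1,\dots,\sigma_m$ themselves, the element $\sigma_j$ or nothing at index $j$ depending on whether $m\ge j$ or $m=j-1$, and the surviving chains $\sigma_i\cdots\sigma_{f_w(i)}$ with $f_w(i)>m$ (necessarily $f_w(i)\le j\le m+1$, so in fact $f_w(i)=m+1$ in many cases, but in general $j\le f_w(i)$). Defining $f$ by $f(r)=r$ for $r\le m$, $f(j)\in\{0,j\}$ as stated, $f(i)=f_w(i)$ when that surviving chain is present, and $f(i)=i$ otherwise, gives an admissible function with $F_n^{\min}(w,m)=F_f$, and properties~(a)--(e) are then direct translations of the corresponding parts of Corollary~\ref{C:properties_encoding} restricted to indices $>m$.

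For the "in particular" clause, once~(a)--(e) are established it is immediate: every element of $F_f$ is either an atom $\sigma_r=\sigma_r\cdots\sigma_r$ (for $r\le m$, i.e.\ $f(r)=r$) or a chain $\sigma_r\sigma_{r-1}\cdots\sigma_{f(r)}$ with $f(r)<r$; there are no $\sigma_{i-1}\sigma_i$ terms surviving because the only index that could have contributed one, namely $j$, satisfies $j-1\le m$ so $\sigma_{j-1}\sigma_j$ is a multiple of $\sigma_{j-1}$. The nesting $j\ge f(i_1)\ge\dots\ge f(i_s)$ for the longer elements follows from~(d) (bounding by $j$) and~(e) (the monotonicity), noting that indices $i$ with $0<f(i)<i$ must satisfy $i>j$ since for $i\le j$ we have $f(i)\in\{0,i\}$ by~(a),(b).

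The main obstacle I anticipate is the bookkeeping at the boundary indices — specifically handling the index $j$ and the index $j+1$ (which in Corollary~\ref{C:properties_encoding} can carry the exceptional value $f_w(j-1)$ or be the unique place where $f_w(j+1)$ can equal $j+1$ via the old $-1$ mechanism), together with carefully splitting the two cases $m=j-1$ and $m\ge j$. One must verify that when $m\ge j$ the value at $j$ genuinely becomes $f(j)=j$ (the atom $\sigma_j$ is now among the added generators) while when $m=j-1$ it becomes $f(j)=0$ (matching $F_n^{\min}(w)$ at that spot since then $f_w(j)\in\{0,-1\}$ and the $-1$ case's $\sigma_{j-1}\sigma_j$ is killed, leaving $\sigma_j$ iff it was already produced — which by Corollary~\ref{C:properties_encoding}\claim{\ref{C:properties_encoding:c}} happens exactly when $f_w(j)=0$). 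Getting these edge cases exactly right, and confirming the resulting $f$ is admissible (i.e.\ $f(i)\le i$ everywhere and $f(1)\ne-1$), is the technical heart; everything else is a transcription of results already proved.
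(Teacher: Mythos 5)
Your overall strategy --- deriving the structure of $F_n^{\min}(w,m)$ from Corollary~\ref{C:properties_encoding} together with the effect of adjoining the atoms $\sigma_1,\dots,\sigma_m$ and taking $\preccurlyeq$-minimal elements --- is exactly the paper's (its proof is a one-line appeal to precisely these two ingredients). However, the explicit absorption rule you state is wrong, and it leads to an incorrect identification of $F_n^{\min}(w,m)$. You claim that a chain $\sigma_i\sigma_{i-1}\cdots\sigma_{f_w(i)}$ ``becomes redundant (a multiple of some $\sigma_r$ with $r\le m$) if $f_w(i)\le m<i$'' and survives only ``if $f_w(i)>m$''. This confuses the terminal letter of the word with left-divisibility: in the prefix order $\preccurlyeq$, the only atom dividing the permutation braid $\sigma_i\sigma_{i-1}\cdots\sigma_c$ on the left is $\sigma_i$ (strands $r$ and $r+1$ cross in this braid only for $r=i$), so the chain is absorbed by the added atoms if and only if $i\le m$; the value $f_w(i)=c$ is irrelevant. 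Concretely, for $w=\sigma_4\sigma_3\sigma_2\sigma_2\sigma_1\in L_5$ one has $F_5^{\min}(w)=\{\sigma_2,\sigma_3\sigma_2\sigma_1,\sigma_4\}$ and $j=1$ (Examples~\ref{E:forbidden_prefixes} and~\ref{E:forbidden_prefixes_encoding}); taking $m=1$ gives $F_5^{\min}(w,1)=\{\sigma_1,\sigma_2,\sigma_3\sigma_2\sigma_1,\sigma_4\}$, i.e.\ $f=[1,2,1,4]$, whereas your rule discards $\sigma_3\sigma_2\sigma_1$ (since $f_w(3)=1\le m$) and would yield $f(3)=3$, i.e.\ a set wrongly containing the atom $\sigma_3$. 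Your downstream conclusion that surviving chains must have $f_w(i)>m$, hence $f_w(i)=j$, and that none survive when $m\ge j$, is therefore false; under your description items (d), (e) and the ``nested'' clause would be essentially vacuous, when in fact they are the substantive content needed later for the evaluation of $T_l(w,m)$.

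The repair is short: set $f(r)=r$ for $r\le m$; for $r>m$ the element of $F_n^{\min}(w)$ indexed by $r$ is never a $\preccurlyeq$-multiple of any $\sigma_1,\dots,\sigma_m$ except in the single case $f_w(r)=-1$ (which forces $r=j$ by Corollary~\ref{C:properties_encoding}, and then $\sigma_{j-1}\sigma_j$ is a multiple of $\sigma_{j-1}$ with $j-1\le m$), so $f(r)=\max\{f_w(r),0\}$ for $r>m$. With this rule, properties (a)--(e) really do transcribe directly from Corollary~\ref{C:properties_encoding}. Your treatment of the index $j$ and of the $\sigma_{i-1}\sigma_i$ terms is correct; it is only the absorption criterion for the descending chains with $i>m$ that needs to be fixed.
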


\begin{proof}
This follows immediately from Corollary~\ref{C:properties_encoding} together with the fact that $\sigma_i\in F_n^{\min}(w,m)$ for $i=1,\dots,m$.
\end{proof}

We are now in a position to calculate $x_{n,k}(w,m)$.  For a non-empty finite set $S=\{a_1,\ldots,a_r\}\subset M$, we define $\bigvee S = a_1\vee \cdots \vee a_r$.  We also define $\bigvee\emptyset=1$.
Recall that $M_k=\{x\in M\mid |x|=k\}$; so $x_{n,k}=|M_k|$ and $x_{n,k}=0$ if $k<0$.

\begin{proposition}\label{P:x_expression}
Let $w\in L_n$ and $m\in \{1,\ldots,n-1\}$. One has
\[
    x_{n,k}(w,m)
     =\sum_{S\subseteq F_{n}^{\min}(w,m)}{(-1)^{|S|}x_{n,k-|w|-|\bigvee S|}}
     =\sum_{l=0}^{k-|w|} \big( x_{n,k-|w|-l} \,\cdot T_l(w,m) \big)
    \;,
\]
where
\[
  T_l(w,m) = \sum_{\substack{S\subseteq F_{n}^{\min}(w,m) \\[1ex] |\bigvee S|=l}}(-1)^{|S|} \;.
\]
\end{proposition}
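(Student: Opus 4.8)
The plan is to count the braids $x$ of length $k-|w|$ that are \emph{excluded}, namely those for which $w\omega(x)\notin L_n$ or $\omega(x)$ starts with some $\sigma_i$ with $i\le m$; by the discussion preceding Definition~\ref{D:forbidden_prefixes_m}, these are precisely the braids $x\in M_{k-|w|}$ that admit a prefix in $F_n^{\min}(w,m)$. Thus
\[
  x_{n,k}(w,m) = x_{n,k-|w|}
     - \Big|\{x\in M_{k-|w|} \mid \alpha\preccurlyeq x \text{ for some }\alpha\in F_n^{\min}(w,m)\}\Big|.
\]
First I would establish the key counting identity: for a finite set $S\subseteq M$, the number of braids of length $\ell$ that are multiples of \emph{every} element of $S$ is exactly $x_{n,\ell-|\bigvee S|}$. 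This is because $(M,\preccurlyeq)$ is a lattice, so $x$ is a common multiple of all $a\in S$ if and only if $\bigvee S\preccurlyeq x$; and since $M$ is cancellative and homogeneous, $x\mapsto (\bigvee S)^{-1}x$ is a length-preserving bijection from $\{x\in M_\ell \mid \bigvee S\preccurlyeq x\}$ onto $M_{\ell-|\bigvee S|}$, which has size $x_{n,\ell-|\bigvee S|}$ (and $0$ when $\ell<|\bigvee S|$).

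Next I would apply inclusion–exclusion over the set $F_n^{\min}(w,m)=\{\alpha_1,\dots,\alpha_r\}$. For each $i$ let $A_i=\{x\in M_{k-|w|}\mid \alpha_i\preccurlyeq x\}$. Then the number of excluded braids is $|A_1\cup\dots\cup A_r| = \sum_{\emptyset\ne S\subseteq\{1,\dots,r\}}(-1)^{|S|+1}\,|\bigcap_{i\in S}A_i|$, and by the previous paragraph $|\bigcap_{i\in S}A_i| = x_{n,\,k-|w|-|\bigvee_{i\in S}\alpha_i|}$. Substituting and moving the empty-set term (which contributes $x_{n,k-|w|}$, matching the lattice convention $\bigvee\emptyset = 1$, $|1|=0$) to the other side gives
\[
  x_{n,k}(w,m) = \sum_{S\subseteq F_n^{\min}(w,m)} (-1)^{|S|}\, x_{n,\,k-|w|-|\bigvee S|},
\]
which is the first claimed equality. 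The second equality is then purely bookkeeping: partition the subsets $S$ according to the value $l=|\bigvee S|$, which ranges over $0,\dots,k-|w|$ (the upper bound because any $S$ with $|\bigvee S|>k-|w|$ contributes $x_{n,\text{negative}}=0$), and collect the sign sum $\sum_{|\bigvee S|=l}(-1)^{|S|}$ into $T_l(w,m)$.

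The only genuine point requiring care — the main (mild) obstacle — is the very first step: justifying that being a common multiple of a set $S$ is equivalent to being a multiple of $\bigvee S$, and that this set of multiples is in length-preserving bijection with $M_{\ell-|\bigvee S|}$. This rests on facts already available in the excerpt: $(M,\preccurlyeq)$ is a lattice order (cited from \cite{Epstein}), $M$ is cancellative and embeds in $B_n$ (used in Section~\ref{S:Forbidden_prefixes}), and $M$ is homogeneous. One also needs that for $S\subseteq F_n^{\min}(w,m)$ the join $\bigvee S$ is well-defined and finite, which is automatic since $S$ is a finite subset of $M$ and $M$ is a lattice. Everything else is a routine inclusion–exclusion computation, so I do not expect any further difficulty; in particular Proposition~\ref{P:description_minimal_m_sets} is not needed here — it will only be used later to compute the numbers $|\bigvee S|$ and hence $T_l(w,m)$ efficiently.
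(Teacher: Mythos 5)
Your proposal is correct and follows essentially the same route as the paper's proof: identify the excluded braids as those admitting a prefix in $F_n^{\min}(w,m)$, apply inclusion--exclusion, and use the lattice property together with cancellativity and homogeneity to evaluate $\bigl|\bigcap_{\alpha\in S}(\alpha M)_{k-|w|}\bigr|$ as $x_{n,k-|w|-|\bigvee S|}$. The only cosmetic difference is that you spell out the length-preserving bijection $x\mapsto(\bigvee S)^{-1}x$ explicitly, which the paper leaves implicit.
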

\begin{proof}
The second equality is just a reordering of terms.
For $\alpha\in M$, let $(\alpha M)_k = \alpha M \cap M_k$.
As $M$ is a cancellative monoid with homogeneous relations
$(\alpha M)_k = \alpha M_{k-|\alpha|}=\{\alpha\beta \mid \beta\in M_{k-|\alpha|} \}$, that is,
$|(\alpha M)_k|=x_{n,k-|\alpha|}$.

The braids $x$ for which $w\omega(x)\notin L_n$ or $\omega(x)$ starts with $\sigma_i$ for $i\le m$ are exactly those that admit a prefix $\alpha\in F_{n}^{\min}(w,m)$.
Moreover, $|w\omega(x)|=|w|+|x|$.
Hence
\[
   x_{n,k}(w,m)  \;\;=\;\;
       x_{n,k-|w|} \;\;-\;\; \Bigg|\bigcup_{\alpha\in F_{n}^{\min}(w,m)}{(\alpha M)_{k-|w|}}\Bigg| \;.
\]
By the inclusion-exclusion principle,
\[
  \Bigg|\bigcup_{\alpha\in F_n^{\min}(w,m)}{(\alpha M)_{k-|w|}}\Bigg|
    \;\;=\;\; \sum_{\emptyset\neq S\subseteq F_n^{\min}(w,m)}(-1)^{|S|-1}
        \Bigg|\bigcap_{\alpha\in S}(\alpha M)_{k-|w|}\Bigg| \;.
\]
Noting that $\bigvee\emptyset=1\in M$, whence $|\bigvee\emptyset|=0$, the claim then follows from
\begin{align*}
\Bigg|\bigcap_{\alpha\in S}(\alpha M)_{k-|w|}\Bigg|
 &\;\;=\;\; \Bigg| \bigg(\bigcap_{\alpha\in S}\alpha M\bigg) \cap M_{k-|w|} \Bigg|
 \;\;=\;\; \Big| \big((\textstyle\bigvee S) M\big)\cap M_{k-|w|} \Big|  \\[1.5ex]
 &\;\;=\;\; \Big| \big(\textstyle\bigvee S\big) M_{k-|w|-|\bigvee S|} \Big|
 \;\;=\;\; x_{n,k-|w|-|\bigvee S|} \;.
\end{align*}
\vskip-4ex
\end{proof}

In order to obtain a sub-exponential algorithm, we need to compute $T_l(w,m)$ without explicitly summing over all subsets of $F_{n}^{\min}(w,m)$.
To this end, we use some special properties of the braid monoid.
By Proposition~\ref{P:description_minimal_m_sets}, the elements of $F_{n}^{\min}(w,m)$ are permutation braids, thus so is the least common multiple of any collection of them.
The crucial point is that, under certain conditions, we only need to know the final position of some strands in a permutation braid, in order to describe sufficiently its least common multiple with another permutation braid.
Specifically, we have the following lemma, which can be readily verified working with the induced permutations~\cite{Epstein}.

\begin{lemma}\label{L:vees}
Let $x\in \langle \sigma_{a},\sigma_{a+1},\ldots,\sigma_{b-1}\rangle^+ \subset M$ be a permutation braid and let $\pi_x$ be the permutation induced by $x$.  Let $\pi_x(a)=a+r$ and $\pi_x(b)=b-s$.
\begin{enumerate}[\textup{(}a\textup{)}]\vspace*{-1ex}
\item
If $a>1$, then $x'=\sigma_{a-1}\vee x = x (\sigma_{a-1}\sigma_a\cdots \sigma_{a-1+r})$.
Moreover,\vspace{-1ex}
\[
  \pi_{x'}(a-1) = a+r
  \qquad\text{and}\qquad
  \pi_{x'}(b) =
    \begin{cases}
      b-s   & \text{if $a+r<b-s$} \\
      b-s-1 & \text{if $a+r>b-s$}
    \end{cases}  \;.
  \vspace{-1ex}
\]
\item
If $b<n$, then $x'=\sigma_{b}\vee x = x (\sigma_{b}\sigma_{b-1}\cdots \sigma_{b-s})$.
Moreover,\vspace{-1ex}
\[
  \pi_{x'}(a) =
    \begin{cases}
      a+r   & \text{if $a+r<b-s$} \\
      a+r+1 & \text{if $a+r>b-s$}
    \end{cases}\vspace{-1ex}
  \qquad\text{and}\qquad
  \pi_{x'}(b+1)=b-s
  \;.
\]
\item
If $b<n$ then $x'=(\sigma_{b}\sigma_{b-1}\cdots \sigma_a) \vee x
                 = x (\sigma_{b}\sigma_{b-1}\cdots \sigma_{a})$.
Moreover,\vspace{-1ex}
\[
  \pi_{x'}(a)=a+r+1
  \qquad\text{and}\qquad
  \pi_{x'}(b+1)=a
  \;.
\]
\end{enumerate}
\end{lemma}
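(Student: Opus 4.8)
The plan is to do everything inside the symmetric group, via the dictionary recalled in the proof of Lemma~\ref{L:calculations}: a positive braid $y$ is a permutation braid exactly when $|y|$ equals the number of inversions of $\pi_y$, and then $y$ is determined by $\pi_y$; for a permutation braid $y$ one has $\sigma_i\preccurlyeq y$ iff $\pi_y(i)>\pi_y(i+1)$; and for permutation braids $y,z$ the join $y\vee z$ is again a permutation braid, namely the one whose induced permutation is the ``smallest'' one realising all the crossings occurring in $\pi_y$ or in $\pi_z$ \cite{Epstein}. Since $x\in\langle\sigma_a,\dots,\sigma_{b-1}\rangle^+$, the permutation $\pi_x$ fixes every strand outside $\{a,\dots,b\}$ and permutes $\{a,\dots,b\}$ among themselves; in particular $a\le a+r\le b$, $a\le b-s\le b$, and $a+r\ne b-s$ (unless $x=1$, which is trivial). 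The general mechanism for each part is: write down the candidate word $w$ with $x'=xw$; compute $\pi_{xw}=\pi_w\circ\pi_x$ and the number of its inversions; conclude that $xw$ is a permutation braid, that it is a common multiple of $x$ and of the relevant second factor, and that $|xw|$ equals the length of the corresponding join; then invoke homogeneity of $M$ to upgrade $\text{(join)}\preccurlyeq xw$ to equality, and finally read the stated entries of $\pi_{x'}$ off $\pi_{xw}$.

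For part~(a) I would take $w=\sigma_{a-1}\sigma_a\cdots\sigma_{a-1+r}$; a direct computation gives $\pi_w(a-1)=a+r$, $\pi_w(a+j)=a+j-1$ for $1\le j\le r$, and $\pi_w$ fixes all other strands, so $w$ is a permutation braid of length $r+1$. Then $\pi_{x'}=\pi_w\circ\pi_x$ sends $a-1\mapsto a+r$, sends $i\in\{a,\dots,b\}$ with $\pi_x(i)\le a+r$ to $\pi_x(i)-1$, and fixes the value $\pi_x(i)$ when $\pi_x(i)>a+r$; a short count shows this has exactly $|x|+r+1$ inversions, so $xw$ is a permutation braid, and one checks it is precisely the smallest permutation braid realising the crossings of $\pi_x$ together with the crossing of strands $a-1$ and $a$. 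Hence $|xw|=|x|+r+1=|\sigma_{a-1}\vee x|$ and $\sigma_{a-1}\vee x=xw=x'$. The displayed values follow: $\pi_{x'}(a-1)=a+r$, and $\pi_{x'}(b)=\pi_w(b-s)$ equals $b-s$ if $b-s>a+r$ and $b-s-1$ if $b-s<a+r$, these being the only cases since $b-s\ne a+r$.

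Parts~(b) and~(c) follow the same template with a different right factor. For~(b) one takes $w=\sigma_b\sigma_{b-1}\cdots\sigma_{b-s}$ (which moves strand $b+1$ down to position $b-s$ and shifts strands $b-s,\dots,b$ one place to the right) and argues exactly as in~(a); alternatively one applies the automorphism $\sigma_i\mapsto\sigma_{n-i}$ to reduce~(b) to~(a). For~(c) one takes $w=\sigma_b\sigma_{b-1}\cdots\sigma_a$ (which shifts strands $a,\dots,b$ one place to the right and brings strand $b+1$ to position $a$, so $|w|=b-a+1$); here the relevant join is that of $x$ with $\sigma_b\cdots\sigma_a$, whose induced permutation has as its only crossings those of strand $b+1$ with each of $a,\dots,b$, and one verifies that $xw$ has the matching induced permutation, the matching length $|x|+b-a+1$, and hence equals the join, whence $\pi_{x'}(a)=\pi_x(a)+1=a+r+1$ and $\pi_{x'}(b+1)=a$.

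The only real work is the bookkeeping in these verifications. I expect part~(c) to be the most delicate, since there the second factor is not an atom and one has to keep track of the images of all of $a,\dots,b+1$ (and, for the length count, of the fact that no inversion is created twice); but in every case the argument is a finite, routine computation with permutations, requiring nothing beyond the description of permutation braids and their joins from \cite{Epstein}.
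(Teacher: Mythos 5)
Your proposal is correct and follows exactly the route the paper takes: the paper gives no written proof of this lemma beyond the remark that it ``can be readily verified working with the induced permutations,'' and your argument is precisely that verification (identify each factor as a permutation braid, compute $\pi_{xw}=\pi_w\circ\pi_x$, count inversions to see no pair of strands crosses twice, and match crossing sets and lengths with the join). One trivial slip: in part~(a) the description of $\pi_w$ should read $\pi_w(a+j)=a+j-1$ for $0\le j\le r$ (so that $\pi_w(a)=a-1$ is included), but this does not affect any of the conclusions you draw.
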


\subsubsection*{Evaluating $T_l(w,m)$}

Let $w\in L_n$, let $j=1$ if $w=\epsilon$, or $w=w'\sigma_j$ otherwise, and let $m\ge j-1$.

We work with a 3-dimensional array $T$ of size $(k-|w|+1)\times n \times n$, which is transformed in at most $n-1$ steps.
At each step, we have two distinguished positions $a$ and $b$ with $1\leq a \le b \leq n$, defining
$S_{a,b} = F_n^{\min}(w,m)\cap \langle \sigma_a,\sigma_{a+1},\ldots,\sigma_{b-1}\rangle^+$, and
for $l\in \{0,\ldots,k-|w|\}$ and $r,s\in \{0,\ldots,n-1\}$, the entry $T_{l,r,s}$ of $T$ is
\[
   T_{l,r,s} \;\; = \sum_{\substack{S\subseteq S_{a,b}\,,\;
                             |\bigvee S|=l \\[1ex]
                              \pi_{(\bigvee S)}(a)=a+r\,,\;
                              \pi_{(\bigvee S)}(b)=b-s
                              }}(-1)^{|S|} \;.
\]
Initially, we take $a=b=j$, so $S_{a,b}=\emptyset$.  Obviously, in this case all entries of $T$ are $0$, except $T_{0,0,0}=1$, which corresponds to $\bigvee\emptyset=1$.

At the final step, we will have $a=1$ and $b=n$, so $S_{a,b} = F_n^{\min}(w,m)$, whence
\[
   T_l(w,m) = \sum_{r=0}^{n-1}\sum_{s=0}^{n-1} T_{l,r,s} \;.
\]

In each step we replace $(a,b)$ by $(a',b')\in\{(a-1,b),(a,b+1)\}$:
If there is some $\sigma_{b}\sigma_{b-1}\cdots \sigma_s\in F_n^{\min}(w,m)$ with $s<a$ or if $b=n$ then we set $(a',b')=(a-1,b)$, otherwise we set $(a',b')=(a,b+1)$.
By Proposition~\ref{P:description_minimal_m_sets}, either $S_{a',b'}\backslash S_{a,b}=\emptyset$, or $S_{a',b'}\backslash S_{a,b}=\{\sigma_b\}$, or $S_{a',b'}\backslash S_{a,b}=\{\sigma_b\sigma_{b-1}\cdots \sigma_a\}$, or $S_{a',b'}\backslash S_{a,b}=\{\sigma_{a-1}\}$.
Hence, if $S_{a',b'}\backslash S_{a,b}=\{x\}\ne\emptyset$ and $y\in S_{a,b}$, then according to Lemma~\ref{L:vees}, the length, the displacement of strand $a'$, and the displacement of strand $b'$ of the element $x\vee y$ only depend on $x$, and the length, the displacement of strand $a$, and the displacement of strand $b$ of the element $y$. Therefore, the table $T$ can be updated when replacing $(a,b)$ by $(a',b')$:
The elements contributing to $T_{l,r,s}$ in the old table will in the updated table contribute to $T_{l,0,s}$ if $(a',b')=(a-1,b)$, respectively to $T_{l,r,0}$ if $(a',b')=(a,b+1)$.
If $S_{a',b'}\backslash S_{a,b}\ne\emptyset$, then in addition, their least common multiples with $x$ will contribute to the entry of the updated table given by Lemma~\ref{L:vees}.
\medskip

The arguments from this section yield Algorithm~\ref{A:countleaves} computing $x_{n,k}(w,m)$.

\begin{algorithm}[p]
\caption{Computing $x_{n,k}(w,m)$}
\label{A:countleaves}
\small\begin{algorithmic}[1]
\REQUIRE{Integers $n\ge2$ and $k\ge 0$.
  A word $w=\sigma_{a_1}\cdots\sigma_{a_t}\in \A_n^*$.
  An integer $m\le n-1$, where $m\geq 1$ if $w=\epsilon$ and $m\geq j-1$ if $w=v\sigma_j$.}
  The numbers $x_{n,0},\ldots, x_{n,k}$.
\ENSURE{The number $x_{n,k}(w,m)$.}\smallskip
\IF{$t>k$}
  \RETURN{0}
\ENDIF
\STATE{$F := \emptyset$}
\FOR{$i := 1$ to $t$}
  \IF{$\sigma_{a_i}\in F$}
    \RETURN{$0$}
  \ELSE
    \STATE{$F := \min_{\preccurlyeq}\left(
                  \{\sigma_{1},\ldots,\sigma_{a_i-2},\sigma_{a_i-1}\sigma_{a_i}\}
                  \cup \{\sigma_{a_i}\backslash \beta \mid \beta\in F\}\right)$}\label{A:countleaves:prefixes1}
  \ENDIF
\ENDFOR
\STATE{$F := \min_{\preccurlyeq}\left(  \{\sigma_{1},\ldots,\sigma_m\}\cup F \right)$}%
                                                                      \label{A:countleaves:prefixes2}
\STATE{$T_{l,r,s} := 0$ for all $(l,r,s)\in I=\{0,\dots,k-t\}\times\{0,\dots,n-1\}\times\{0,\dots,n-1\} $}%
                                                                             \label{A:countleaves:init}
\IF{$t=0$}
  \STATE{$a := 1$\,;\, $b := 1$\,;\, $\alpha := 0$\,;\, $T_{0,0,0} := 1$}
\ELSE
  \STATE{$a := a_t$\,;\, $b := a_t$\,;\, $\alpha := 0$\,;\, $T_{0,0,0} := 1$}
\ENDIF
\WHILE{$(a,b)\neq (1,n)$}
  \IF{$b=n$ or $\sigma_b\sigma_{b-1}\cdots \sigma_s\in F$ for some $s<a$}
    \STATE{$(a',b') := (a-1,b)$\,;\, $\alpha := \alpha+1$}
    \IF{$F\cap\langle \sigma_{a'},\ldots,\sigma_{b'-1}\rangle^+
         = F\cap\langle \sigma_a,\ldots,\sigma_{b-1}\rangle^+$}
      \STATE{\textbf{for} $(l,r,s)\in I$ \textbf{do} $ T'_{l,r,s} :=
                 \begin{cases}
                    \sum_{u=0}^{b-a}T_{l,u,s} & \text{if $r=0$} \\
                    0                         & \text{otherwise}
                 \end{cases}
             $}\label{A:countleaves:case1}
    \ELSIF{$(F\cap\langle \sigma_{a'},\ldots,\sigma_{b'-1}\rangle^+)
         \setminus (F\cap\langle \sigma_a,\ldots,\sigma_{b-1}\rangle^+)
         = \{ \sigma_{a-1} \}$}
      \STATE{\textbf{for} $(l,r,s)\in I$ \textbf{do} $ T'_{l,r,s} :=
                 \begin{cases}
                    \sum_{u=0}^{b-a}T_{l,u,s} & \text{if $r=0$} \\
                    -T_{l-r, r-1, s}          & \text{if $1\le r\le l$ and $r+s<\alpha$} \\
                    -T_{l-r, r-1, s-1}        & \text{if $1\le r\le l$ and $r+s>\alpha$} \\
                    0                         & \text{otherwise}
                 \end{cases}
             $}\label{A:countleaves:case2}
    \ENDIF
  \ELSE
    \STATE{$(a',b') := (a,b+1)$\,;\, $\alpha := \alpha+1$}
    \IF{$F\cap\langle \sigma_{a'},\ldots,\sigma_{b'-1}\rangle^+
         = F\cap\langle \sigma_a,\ldots,\sigma_{b-1}\rangle^+$}
      \STATE{\textbf{for} $(l,r,s)\in I$ \textbf{do} $ T'_{l,r,s} :=
                 \begin{cases}
                    \sum_{u=0}^{b-a}T_{l,r,u} & \text{if $s=0$} \\
                    0                         & \text{otherwise}
                 \end{cases}
             $}\label{A:countleaves:case3}
    \ELSIF{$(F\cap\langle \sigma_{a'},\ldots,\sigma_{b'-1}\rangle^+)
         \setminus (F\cap\langle \sigma_a,\ldots,\sigma_{b-1}\rangle^+)
         = \{ \sigma_b \}$}
      \STATE{\textbf{for} $(l,r,s)\in I$ \textbf{do} $ T'_{l,r,s} :=
                 \begin{cases}
                    \sum_{u=0}^{b-a}T_{l,r,u} & \text{if $s=0$} \\
                    -T_{l-s, r, s-1}          & \text{if $1\le s\le l$ and $r+s<\alpha$} \\
                    -T_{l-s, r-1, s-1}        & \text{if $1\le s\le l$ and $r+s>\alpha$} \\
                    0                         & \text{otherwise}
                 \end{cases}
             $}\label{A:countleaves:case4}
    \ELSIF{$(F\cap\langle \sigma_{a'},\ldots,\sigma_{b'-1}\rangle^+)
         \setminus (F\cap\langle \sigma_a,\ldots,\sigma_{b-1}\rangle^+)
         = \{ \sigma_b\cdots\sigma_a \}$}
      \STATE{\textbf{for} $(l,r,s)\in I$ \textbf{do} $ T'_{l,r,s} :=
                 \begin{cases}
                    \sum_{u=0}^{b-a}T_{l,r,u}           & \text{if $s=0$} \\
                    -\sum_{u=0}^{b-a}T_{l-\alpha,r-1,u} & \text{if $s=\alpha$, $l\ge \alpha$, $r\ge 1$} \\
                    0                                   & \text{otherwise}
                 \end{cases}
             $}\label{A:countleaves:case5}
    \ENDIF
  \ENDIF
  \STATE{$(a,b) := (a',b')$\,;\, $T := T'$}
\ENDWHILE\vspace{1ex}
\RETURN{$\sum_{l=0}^{k-t}\left(\sum_{r=0}^{n-1}\sum_{s=0}^{n-1}T_{l,r,s}\right) x_{n,k-t-l}$}%
                                                                        \label{A:countleaves:return}
\end{algorithmic}
\end{algorithm}

\section{Complexity and timing analysis}\label{S:Complexity}

The analysis of the worst-case complexity of Algorithm~\ref{A:random} is relatively straightforward.
We assume that the addition of two $N$-bit integers has cost $O(N)$, and that the multiplication of two $N$-bit integers, in the relevant range of $N$, has cost $O(N^{\alpha})$ with $\alpha = \log_2 3 \approx 1.585$ (Karatsuba multiplication).

\begin{proposition}\label{P:complexity}
\quad\vspace*{-1ex}
\begin{enumerate}[\textup{(}a\textup{)}]
\item\label{P:complexity:a}
The worst-case complexity of Algorithm~\ref{A:countleaves} is at most $O(n^4k^{\alpha+1})$.
\item\label{P:complexity:b}
The worst-case complexity of Algorithm~\ref{A:random} is at most $O(n^4\ln n\ k^{\alpha+2})$.
\end{enumerate}
\end{proposition}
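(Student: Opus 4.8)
The plan is to bound the cost of each algorithm by multiplying the number of arithmetic operations by the cost of a single operation, where the operands are integers with $O(k\ln n)$ bits (since $x_{n,k}\le (n-1)^k$, so $x_{n,k}$ has $O(k\log n)$ bits, and all quantities appearing in the algorithms are bounded by $x_{n,k}$ up to sign, being sums of at most $2^{|F_n^{\min}|}\le 2^{n}$ signed counts of braids).

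\textbf{Part (a): complexity of Algorithm~\ref{A:countleaves}.}
First I would note that the precomputation of $h_{s,t}$ and $x_{n,s}$ is done once and is cheap, so it can be ignored in the per-call cost. The forbidden-prefix computation (lines~\ref{A:countleaves:prefixes1}--\ref{A:countleaves:prefixes2}) costs $O(n|w|)\le O(nk)$ small operations by the remark following Corollary~\ref{C:existence_encoding}, which is negligible. The dominant cost is the \textbf{while} loop: it runs at most $n-1$ times (each iteration decreases $a$ or increases $b$, and $(a,b)$ travels from $(j,j)$ to $(1,n)$). Each iteration updates the array $T'$ of size $(k-|w|+1)\times n\times n = O(kn^2)$ entries. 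In cases~\ref{A:countleaves:case1}, \ref{A:countleaves:case3} and some branches of \ref{A:countleaves:case2}, \ref{A:countleaves:case4}, an entry $T'_{l,r,s}$ is a sum of at most $b-a+1\le n$ previously-computed entries; in case~\ref{A:countleaves:case5} likewise a sum of at most $n$ entries. So computing all entries of $T'$ costs $O(kn^3)$ additions of integers with $O(k\ln n)$ bits, i.e.\ $O(kn^3\cdot k\ln n) = O(n^3 k^2\ln n)$ bit operations per iteration. Over $n-1$ iterations this is $O(n^4k^2\ln n)$. Finally line~\ref{A:countleaves:return} computes a sum of $O(kn^2)$ products of $O(k\ln n)$-bit integers, costing $O(kn^2)\cdot O((k\ln n)^\alpha) = O(n^2 k^{\alpha+1}(\ln n)^\alpha)$ bit operations. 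Absorbing logarithmic factors and the lower-order additive terms into the stated polynomial bound gives the claimed $O(n^4k^{\alpha+1})$; here I would be a little careful to check that the additions, although more numerous, are individually cheaper (linear rather than $\alpha$-th power), so that the multiplication step genuinely dominates when the bounds are combined as stated.

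\textbf{Part (b): complexity of Algorithm~\ref{A:random}.}
The precomputation of $x_{n,k}$ (line~\ref{A:random:Bronfman}) via equations~(\ref{E:h_m_j}) and (\ref{E:x_n_j}) costs $O(nk)$ respectively $O(k^2)$ arithmetic operations on $O(k\ln n)$-bit integers, hence is dominated by what follows. Generating the random integer $r$ is free by assumption. The main cost is the double loop: the \textbf{for} loop runs $k$ times, and the inner \textbf{while} (binary search) runs $O(\log_2(n-1)) = O(\ln n)$ times, each time calling Algorithm~\ref{A:countleaves} once (plus an update of $\nu$ costing one subtraction). Thus the total is $O(k\ln n)$ calls to Algorithm~\ref{A:countleaves}, giving $O(k\ln n)\cdot O(n^4k^{\alpha+1}) = O(n^4\ln n\ k^{\alpha+2})$, as claimed. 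I would also observe that the final call $b(w)$ — converting the length-$k$ word $w$ into a braid — costs at most $O(nk)$, again negligible.

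\textbf{Main obstacle.} The routine part is the operation-counting; the one place demanding care is the bit-length bookkeeping. I need to argue cleanly that \emph{every} integer manipulated in Algorithm~\ref{A:countleaves} — not only the $x_{n,s}$ but the intermediate $T_{l,r,s}$, which are alternating sums over subsets of $F_n^{\min}(w,m)$ — has absolute value bounded by something with $O(k\ln n)$ bits. The bound $|T_{l,r,s}|\le$ (number of braids of length $l$) $\le x_{n,l}\le x_{n,k}$ follows because $T_{l,r,s}$ is, by the inclusion–exclusion identity underlying Proposition~\ref{P:x_expression} specialised to a fixed $\bigvee S$, a signed count of a subcollection of $M_l$; I would spell this out so that no hidden exponential-in-$n$ factor in the bit length sneaks in. Once that is settled, everything else is a direct multiplication of the two factors.
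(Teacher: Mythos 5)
Your overall strategy---count the arithmetic operations in each line and multiply by the bit-cost of a single operation---is exactly the paper's, and your part~(b), as well as your treatment of line~\ref{A:countleaves:return}, go through. However, the accounting for the while-loop of Algorithm~\ref{A:countleaves}, which is the dominant term in~(a), contains two slips that together mean your chain of estimates does not actually land under the claimed bound. First, the entries $T_{l,r,s}$ have $O(n)$ bits, not $O(k\ln n)$: each is an alternating sum of $\pm 1$ over subsets of $F_n^{\min}(w,m)$, a set with at most $n-1$ elements (Proposition~\ref{P:description_minimal_m_sets}), so $|T_{l,r,s}|\le 2^{n-1}$. Your proposed bound $|T_{l,r,s}|\le x_{n,k}$ via ``a signed count of a subcollection of $M_l$'' conflates the subsets $S$ with their least common multiples $\bigvee S$ (several subsets can share one lcm, so the number of admissible $S$ need not be bounded by $x_{n,l}$), and in any case it is the wrong bound to invoke, being far larger than $2^{n-1}$ when $k\gg n$. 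Second, not every one of the $O(kn^2)$ entries of $T'$ is a sum of up to $n$ terms: inspecting lines~\ref{A:countleaves:case1}--\ref{A:countleaves:case5}, only the $O(kn)$ entries in the slice $r=0$ (respectively $s=0$ or $s=\alpha$) are genuine sums; the remaining $O(kn^2)$ entries are single assignments or negated copies. With your figures the while-loop costs $n\cdot O(kn^3)\cdot O(k\ln n)=O(n^4k^2\ln n)$ bit operations, and the closing appeal to ``absorbing logarithmic factors'' does not rescue this: $n^4k^2\ln n$ is \emph{not} $O(n^4k^{\alpha+1})$ as a uniform two-variable bound (take $k$ bounded and $n\to\infty$). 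With both corrections the while-loop costs $n\cdot\big(O(kn^2)\text{ additions}+O(kn^2)\text{ assignments}\big)\cdot O(n)=O(n^4k)$, as in the paper, after which line~\ref{A:countleaves:return} dominates with $O(n^3k+k(n+k)^{\alpha})=O(n^3k^{\alpha+1})$, and~(a) follows.

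A minor further point: the paper bounds the bit length of the $x_{n,j}$ by $O(k)$ using Vershik's bound on the logarithmic volume of $B_n^+$, whereas your cruder estimate $x_{n,k}\le (n-1)^k$ only gives $O(k\ln n)$ bits. For the multiplications in line~\ref{A:countleaves:return} and in the precomputation this costs you an extra factor $(\ln n)^{\alpha}$, which still fits comfortably under the $n^4$ headroom, so that particular looseness is harmless; it is only for the array $T$ that the $O(k\ln n)$ figure must be replaced by $O(n)$.
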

\begin{proof}
The absolute values of all entries of the array $T$ are bounded by $2^n$, so the entries of $T$ have at most $n$ bits.
As it is known \cite[Theorem 10]{Vershik}
that the logarithmic volume $\lim_{k\to \infty}{\frac{\log x_{n,k}}{k}}$ of $B_n^+$ is bounded above by 4, the numbers $x_{n,j}$ for $0\le j\le k$ have $O(k)$ bits.

Consider Algorithm~\ref{A:countleaves} and recall that we can describe forbidden prefixes by admissible functions as explained in Section~\ref{S:Forbidden_prefixes}.
By Proposition~\ref{P:minimal_vw_encoding}, line~\ref{A:countleaves:prefixes1}, which is executed at most $k$ times, and line~\ref{A:countleaves:prefixes2} both have a cost of $O(n)$.
Line~\ref{A:countleaves:init} has cost $O(n^2k)$.
Now consider lines~\ref{A:countleaves:case1}, \ref{A:countleaves:case2}, \ref{A:countleaves:case3}, \ref{A:countleaves:case4} and \ref{A:countleaves:case5}: At most $n$ executions of these occur.
Each line involves $O(nk)$ sums of $O(n)$ terms, and possibly $O(n^2 k)$ assignments.  Since the operands have at most $O(n)$ bits, the total cost is at most $O(n^4 k)$.
Line~\ref{A:countleaves:return} involves $O(n^2 k)$ additions with operands of size $O(n)$, and
$k$ multiplications and additions with operands of size $O(n+k)$, so the cost is at most
$O(n^3 k + k(n+k)^\alpha) = O(n^3k^{\alpha+1})$.
All other lines have cost $O(1)$ and are executed at most $n$ times, so \claim{\ref{P:complexity:a}} is shown.

Consider Algorithm~\ref{A:random}.  The body of the for-loop (lines~\ref{A:random:for-start} to \ref{A:random:for-end}) is executed $k$ times, the body of the while-loop (lines~\ref{A:random:while-start} to \ref{A:random:while-end}) at most $O(\ln n)$ times.  The cost of the for-loop is dominated by the invocations of Algorithm~\ref{A:countleaves}; by \claim{\ref{P:complexity:a}}, the cost is at most
$O(n^4\ln n\ k^{\alpha+2})$.
Line~\ref{A:random:random} has a cost of $O(k)$.
From Section~\ref{S:counting_braids}, equation~(\ref{E:h_m_j}), calculating $h_{m,j}$ involves $m$ additions.  We need to calculate $h_{m,j}$ for $0\le m\le n$ and $0\le j\le k$, so overall $O(n^2k)$ additions are required; as it follows by induction on $n$ that the operands have at most $O(n\ln n)$ bits, this has a cost of at most $O(n^3\ln n\ k)$.  Similarly, from equation~(\ref{E:x_n_j}), the calculation of $x_{n,j}$ involves $O(j)$ additions and multiplications; we need to calculate $x_{n,j}$ for $0\le j\le k$, so overall this requires $O(k^2)$ additions and multiplications of operands with at most $O(n\ln n+k)$ bits, which has a cost of at most $O((n\ln n)^\alpha\ k^{\alpha+2})$.
This shows \claim{\ref{P:complexity:b}}.
\end{proof}

\begin{remark}\label{R:implementation}\em
There are several optimisations that can be applied when implementing Algorithms~\ref{A:random} and~\ref{A:countleaves}.  While they do not affect the estimate of the worst-case complexity, they have a significant impact on actual running times.
\begin{enumerate}\vspace*{-1ex}\addtolength{\itemsep}{-1ex}
\item\label{R:implementation:cache_prefixes}
It is clearly not necessary to compute the forbidden prefixes from scratch in every invocation of Algorithm~\ref{A:countleaves}.  If the forbidden prefixes of the generated part of the word are stored in Algorithm~\ref{A:random}, only one invocation of line~\ref{A:countleaves:prefixes1} of Algorithm~\ref{A:countleaves} is needed.
\item\label{R:implementation:Bronfman_init}
It is possible to initialise the array $T$ in Algorithm~\ref{A:countleaves} directly for the values of $a$ and $b$ corresponding to the largest range that does not involve a forbidden prefix consisting of more than one letter, using the data computed in Section~\ref{S:counting_braids}.
\item\label{R:implementation:T_sparse}
The array $T$ in Algorithm~\ref{A:countleaves} is in general very sparse, as its contents are often collapsed to the 2-dimensional subarrays given by $r=0$ respectively $s=0$.
Keeping track of the values of $r$ and $s$ that actually can contain non-zero entries and only considering those in any subsequent summations greatly reduces actual running times; see Remark~\ref{R:timings}.
\item\label{R:implementation:l_bounded}
Since the least common multiple of all forbidden prefixes is a permutation braid, its length is bounded above by $\binom{n}{2}$.
Similarly, $H_n(t)$ is a polynomial of degree $\binom{n}{2}$, so $h_{n,j}=0$ if $j>\binom{n}{2}$.
It is therefore possible to restrict some summations to $\min\{k,\binom{n}{2}\}$ terms.
It is clear from the proof of Proposition~\ref{P:complexity} that in this case (that is, $n$ constant and $k>\binom{n}{2}$) the complexity in terms of $k$ is reduced to $O(k^{\alpha+1})$.
\end{enumerate}
\end{remark}

Table~\ref{Tbl:timings} contains timing results for several values of $n$ and $k$, using the implementation of the algorithm in the C-kernel of the computational algebra package \Magma\ \cite{Magma} by the first author.
Computations were done with a development version of \textsc{Magma} V2.16 on a GNU\,/\,Linux system with an Intel E8400 64-bit CPU (core:~3\,GHz, FSB:~1333\,MHz) and a main memory bandwidth of 4.7\,GB/s (X38 chipset, dual channel DDR2 RAM, memory bus: 1066\,MHz).
The minimum sample size was 100.
The sample size was varied to achieve an approximate running time of 2.5 minutes per sample (where possible honouring the minimum sample size), and hence a sufficient degree of accuracy for all parameter values.

\begin{table}[ht]\begin{center}
\begin{tabular}{@{}r@{}r|cccccccc@{\,}}
         &       &                    \multicolumn{8}{c}{$n$}                            \\
         &       &      4 &      8 &     16 &     32 &     64 &    128 &    256 &    512 \\
  \hline\rule{0pt}{11pt}
         &    4  & 1.09e0 & 1.33e0 & 1.76e0 & 2.07e0 & 2.45e0 & 2.95e0 & 3.58e0 & 8.03e0 \\
         &    8  & 4.62e0 & 6.39e0 & 5.79e0 & 6.10e0 & 1.26e1 & 2.19e1 & 3.13e1 & 4.84e1 \\
         &   16  & 2.20e1 & 5.10e1 & 4.56e1 & 7.52e1 & 9.73e1 & 1.35e2 & 2.31e2 & 2.42e2 \\
         &   32  & 1.07e2 & 3.81e2 & 4.63e2 & 6.70e2 & 8.81e2 & 1.01e3 & 1.15e3 & 1.79e3 \\
    $k$  &   64  & 4.84e2 & 2.07e3 & 4.55e3 & 6.72e3 & 5.72e3 & 7.94e3 & 7.61e3 & 7.72e3 \\
         &  128  & 2.01e3 & 9.55e3 & 3.30e4 & 5.24e4 & 5.70e4 & 1.03e5 & 1.13e5 & 1.04e5 \\
         &  256  & 8.42e3 & 4.27e4 & 1.90e5 & 4.17e5 & 6.06e5 & 1.20e6 & 1.81e6 & 2.06e6 \\
         &  512  & 4.36e4 & 1.78e5 & 9.70e5 & 3.27e6 & 5.85e6 & 1.23e7 & 2.56e7 & 3.29e7 \\
         & 1024  & 2.52e5 & 7.24e5 & 4.77e6 & 2.07e7 & 5.22e7 & 1.25e8 & 2.78e8 & 4.29e8
\end{tabular}\end{center}\vspace*{-3ex}
\caption{Running times in $\mu s$ per random braid for different values of $n$ and $k$.}\label{Tbl:timings}
\end{table}

\begin{table}[ht]\begin{center}
\begin{tabular}{@{}@{}r|ccccccccc@{\,}}
         $k$   &    4 &    8 &   16 &   32 &   64 &  128 &  256 &  512 & 1024 \\ \hline\rule{0pt}{11pt}
         $e_n$ & 0.35 & 0.50 & 0.48 & 0.48 & 0.47 & 0.75 & 1.08 & 1.36 & 1.59
\end{tabular}\end{center}\vspace*{-3ex}
\caption{Observed exponent $e_n$ of $n$ for different values of $k$; see Remark~\ref{R:timings}.}\label{Tbl:exponent_n}
\end{table}

\begin{table}[ht!]\begin{center}
\begin{tabular}{@{}@{}r|cccccccc@{\,}}
                       $n$   &    4 &    8 &   16 &   32 &   64 &  128 &  256 &  512
                                                                       \\ \hline\rule{0pt}{11pt}
    $e_k$             & 2.30 & 2.41 & 2.80 & 3.02 & 3.08 & 3.19 & 3.27 & 3.21
\end{tabular}\end{center}\vspace*{-3ex}
\caption{Observed exponent $e_k$ of $k$ for different values of $n$; see Remark~\ref{R:timings}.}\label{Tbl:exponent_k}
\end{table}

\begin{remark}\label{R:timings}\em
Assuming that the observed running time $t$ is approximated by a relation of the form
$t = c\,n^{e_n}k^{e_k}$, the value of $e_n$ for a fixed value of $k$ can be obtained by applying regression analysis to the rows of Table~\ref{Tbl:timings}.  Similarly, the value of $e_k$ for a fixed value of $n$ can be obtained from the columns of Table~\ref{Tbl:timings}.
The observed values of $e_n$ and $e_k$ are given in Tables~\ref{Tbl:exponent_n} and \ref{Tbl:exponent_k}, respectively.
\begin{enumerate}\vspace{-1ex}\addtolength{\itemsep}{-1ex}
\item
The observed values of $e_k$ are close to $\alpha+2\approx 3.585$ for large values of~$n$.  For small values of $n$, where $k>\binom{n}{2}$ for most of the parameter range tested, the observed values of $e_k$ are closer to $\alpha+1\approx 2.585$.

This is exactly what should be expected from Proposition~\ref{P:complexity} and Remark~\ref{R:implementation}~(\ref{R:implementation:l_bounded}).
That is, as far as $k$ is concerned, the average case complexity is quite close to the worst-case complexity.
\item
The observed values of $e_n$ are significantly smaller than $4$, that is, the average case complexity in $n$ is significantly better than the worst-case bound of Proposition~\ref{P:complexity}.

This is not surprising in the light of Remark~\ref{R:implementation}~(\ref{R:implementation:T_sparse}):
Our implementation exploits the fact that the array $T$ is sparse and tends to contain non-zero entries only for relatively few values of the indices $r$ and $s$.  The experimental results suggest that the array $T$ effectively is far from 3-dimensional, especially if $k$ is relatively small.
We expect that the theoretical growth rate in $n$ will be achieved, if at all, only for extremely large values of $k$.
\end{enumerate}\vspace{-1ex}
\end{remark}




\bigskip
\noindent\textbf{Volker Gebhardt}\\
\noindent School of Computing, Engineering and Mathematics\\
University of Western Sydney,
Locked Bag 1797, Penrith NSW 2751, Australia.\\
\noindent E-mail: \texttt{v.gebhardt@uws.edu.au}

\bigskip
\noindent\textbf{J.\,Gonz\'alez-Meneses}\\
\noindent Departamento de \'{A}lgebra, Facultad de Matem\'{a}ticas, Instituto de Matem\'aticas (IMUS), Universidad de Sevilla,
Apdo.\,1160, 41080 Sevilla, Spain.\\
\noindent E-mail: \texttt{meneses@us.es}


\begin{thebibliography}{99}
\small

\bibitem{Albenque-Nadeau}
M.\,Albenque, P.\,Nadeau.
Growth function for a class of monoids.
\textit{FPSAC 2009}, Hagenberg, Austria.  DMTCS proc.\ AK (2009) 25--38.

\bibitem{Bronfman}
A.\,Bronfman.
Growth functions of a class of monoids.
Preprint. 2001.

\bibitem{Magma}
W.\,Bosma, J.\,Cannon, C.\,Playoust.
The {MAGMA} algebra system {I}: {T}he user language.
\textit{J.\ Symbolic Comput.} \textbf{24} (1997) 235--265.

\bibitem{Deligne}
P.\,Deligne.
Les immeubles des groupes de tresses g\'en\'eralis\'es.
\textit{Invent.\ Math.} \textbf{17} (1972), 273--302.

\bibitem{Epstein}
D.\,Epstein, J.\,Cannon, D.\,Holt, S.\,Levy, M.\,Paterson, W.\,Thurston.
\textit{Word processing in groups.}
Jones and Bartlett Publishers, Boston, MA, 1992.

\bibitem{Flajolet}
P.\,Flajolet, P.\,Zimmermann, B.\,Cutsem.
A calculus for the random generation of labelled combinatorial structures.
\textit{Theor.\ Comp.\ Sci.} \textbf{132} (1994), 1--35.

\bibitem{GM2011}
J.\,Gonz\'alez-Meneses.
On the growth function of braid monoids.
In preparation.

\bibitem{Stanley}
E.\,Stanley.
On the number of Reduced Decompositions of Elements of Coxeter Groups.
\textit{Europ.\ J.\ of Combinatorics} \textbf{5} (1984), 359--372.

\bibitem{Vershik}
A.\,M.\,Vershik, S.\,Nechaev, R.\,Bikbov.
Statistical properties of locally free groups with applications to
  braid groups and growth of random heaps.
\textit{Commun.\ Math.\ Phys.} \textbf{212} (2000), 469--501.

\bibitem{Wilf}
H.\,Wilf.
A unified setting for sequencing, ranking, and selection algorithms
 for combinatorial objects.
\textit{Adv.\ Math.} \textbf{24} (1977), 281--291.

\end{thebibliography}
\end{document}